\newtheoremstyle{special}%
{}%
{}%
{}%
{}%
{\scshape}%
{.}%
{.5em}%
{}
\newtheorem{theorem}{Theorem}
\newtheorem{proposition}[theorem]{Proposition}
\newtheorem{lemma}[theorem]{Lemma}
\newtheorem{cor}[theorem]{Corollary}
\newtheorem{definition}[theorem]{Definition}
\theoremstyle{special}
\newtheorem{remark}[theorem]{Remark}
\newtheorem{example}[theorem]{Example}
\renewcommand{\epsilon}{\varepsilon}
\renewcommand{\L}{\mathcal{L}}
\def\Id{\text{\rm Id}}
\def\N{\mathbb{N}}
\def\Z{\mathbb{Z}}
\def\R{\mathbb{R}}
\DeclareMathOperator{\var}{Var}
\begin{document}

	\title{Quenched linear response for smooth expanding on average cocycles}

	\author{Davor Dragi\v cevi\'c\footnote{Department of Mathematics, University of Rijeka, Radmile Matej\v ci\' c 2, 51000 Rijeka,  Croatia. \texttt{Email:ddragicevic@math.uniri.hr}} , Paolo Giulietti\footnote{Dipartimento di Matematica, Università di Pisa, Largo Bruno Pontecorvo 5, 56127 Pisa, Italia \texttt{Email:paolo.giulietti@unipi.it}} , Julien Sedro\footnote{Sorbonne Universit\'e and Universit\'e de Paris, CNRS, Laboratoire de Probabilit\'es, Statistique et Mod\'elisation, F-75005 Paris, France \texttt{Email:sedro@lpsm.paris}}}

	\maketitle
	\abstract{We establish an abstract quenched linear response result for random dynamical systems, which we then apply to the case of smooth expanding on average cocycles on the unit circle. In sharp contrast to the existing results in the literature, we deal with the class of random dynamics that does not necessarily exhibit uniform decay of correlations. Our techniques rely on the infinite-dimensional ergodic theory and in particular, on the study of the top Oseledets space of a parametrized transfer operator cocycle. Finally, we exhibit a surprising phenomenon: a random system and a smooth observable for which quenched linear response holds, but annealed response fails.}
	
	\section{Introduction}
	
	\subsection{Linear response for deterministic and random dynamical systems}
	Let $M$ be a (compact) Riemannian manifold  and consider a family $(T_{\omega, \epsilon})$ of sufficiently smooth maps acting on $M$ and indexed by $\omega \in \Omega$ and $\epsilon \in I$, where $(\Omega, \mathcal F, \mathbb P)$ is a probability space and $0\in I\subset \R$ is an interval. One can view $T_{\omega, \epsilon}$ as a  `small' perturbations of $T_{\omega, 0}$.
	Endowing the probability space $\Omega$
	with an invertible map $\sigma \colon \Omega \to \Omega$ that is measure-preserving and ergodic, we may form the \emph{random products} over $\sigma$, defined by 
	\[
	T_{\omega, \epsilon}^n:=T_{\sigma^{n-1}\omega, \epsilon} \circ \ldots \circ T_{\sigma \omega, \epsilon} \circ T_{\omega, \epsilon}.
	\]
	Let us assume that for each $\epsilon \in I$, the cocycle $(T_{\omega, \epsilon})_{\omega \in \Omega}$ admits a unique physical equivariant measure, that is, a measurable family of probability measures $(h_{\omega, \epsilon})_{\omega \in \Omega}$, 
	such that 
	\[
	T_{\omega, \epsilon}^*h_{\omega, \epsilon}=h_{\sigma \omega, \epsilon} \quad \text{for $\mathbb P$-a.e. $\omega \in \Omega$,}
	\]
	where $T_{\omega, \epsilon}^*h_{\omega, \epsilon}$ denotes the push-forward of $h_{\omega, \epsilon}$ with respect to $T_{\omega, \epsilon}$, and such that the ergodic basin of $h_{\omega, \epsilon}$ has a positive Riemannian volume.
	Then, it is natural to ask the following questions: is the map $\epsilon \mapsto h_{\omega, \epsilon}$
	differentiable at $\epsilon=0$ (in a suitable sense)? If so,  can one
	give an explicit formula for its derivative?
	These questions are known as the \emph{linear response} problem.
	
	We emphasize that in the context of \emph{deterministic} dynamical systems (which corresponds, in our setting, to the case when $\Omega$ is a singleton), the linear response problem has been thoroughly studied: it was discussed for smooth expanding 
	systems (either on the unit circle or in higher dimensions)~\cite{B2,Baladibook,S}, piecewise expanding maps of the interval \cite{B1,BS1}, unimodal maps~\cite{BS2}, intermittent maps \cite{BahSau,BT,K}, hyperbolic diffeomorphisms and flows~\cite{BL1, BL2, GL, Ruelle97}, as well as for large classes of partially hyperbolic systems~\cite{Dol}. We refer to~\cite{B2} for a detailed survey of the linear response theory for deterministic dynamical systems. 
	
	The setting of random dynamical systems can be divided in two different subcases, the annealed case and
	the quenched case. The annealed case
	may be studied by methods very similar to the deterministic case, via weak spectral
	perturbation for the associated family of transfer operators, or quantitative stability statements for fixed points of Markov operators, and often enjoy a convenient
	`regularization property': we refer to~\cite{BRS,GG,GS,GL,HM} for details.

	Quenched statistical stability (i.e. continuity, in a suitable sense, of the map $\epsilon\mapsto h_{\omega,\epsilon}$ in $\epsilon=0$) has been studied for some time: see, e.g., \cite{Bo} for random subshift of finite type, \cite{BKS} for smooth expanding maps, \cite{HC,DS} for Anosov systems. Closer to the focus of the present paper, quenched statistical stability for an expanding on average cocycles of piecewise expanding systems, exhibiting non-uniform decay of correlations (as considered by Buzzi \cite{Buzzi}) was established in \cite{FGTQ}.
	
	On the other hand, quenched linear response has begun to receive adequate attention only very recently. More precisely, the quenched linear response for (smooth) random dynamical systems was discussed in~\cite{RS}  for expanding dynamics,  in~\cite{DS1} for hyperbolic dynamics, and finally in~\cite{CN} for some classes of partially hyperbolic dynamics. 
	
	A common feature of all those results is that they are restricted to 
	the case when  the `unperturbed'  cocycle $(T_{\omega,0})_{\omega \in \Omega}$   exhibits
	uniform (with respect to   $\omega$) decay of correlations (see~\cite[Remark 4.20]{RS}, \cite[eq. (25)]{DS1}, \cite[Definition 3.4]{CN} and~\cite[eq. (QR0)]{CN}).  
	In addition, various other assumptions, such as those on appropriate Lasota-Yorke inequalities are of `uniform'-type, i.e. the  associated constants are not allowed to depend on the random parameter $\omega$. 
	
	However, there are many interesting classes of random dynamical systems which, in general, do not exhibit uniform decay of correlations. Those include smooth or piecewise smooth random expanding on average maps~\cite{Buzzi, K1}, as well as random distance expanding maps~\cite{HK}. For some recent results dealing with limit laws for such systems, we refer to~\cite{DS,DHS, HK} and references therein.
	
	\subsection{Contributions of the present paper}
	The main objective of the present paper is to establish a linear response result (see Theorem~\ref{LR1}) for the so-called parameterized smooth expanding on average cocycles on the unit circle (see Definition~\ref{EC}). For this purpose, we formulate abstract statistical stability and linear response results for random dynamical systems (see Theorem~\ref{SS} and \ref{LR}), and (see Section~\ref{PSC}) verify all of their assumptions in the case of parameterized smooth expanding on average cocycles. In sharp contrast with the previously discussed results in~\cite{CN, DS, RS}, our approach \emph{does not require stochastic uniformity} (i.e. uniformity w.r.t $\omega$) and deal with systems exhibiting nonuniform decay of correlations. We also note that our Theorem \ref{SS} allows to recover the statistical stability results of \cite{FGTQ}, for a class of piecewise monotone, expanding on average random systems introduced by Buzzi \cite{Buzzi}, which is, to the best of our knowledge, the only result dealing with quenched stability in a stochastically non-uniform setting. 
	
	Our methods rely on the infinite-dimensional version of the multiplicative ergodic theorem (MET) established in~\cite{GTQ}, coupled with some perturbative estimates on transfer operators generalizing those of \cite{GG,GS} and close to those in~\cite{DS1}. In particular, our approach requires a detailed analysis 
	of the so-called top Oseledets space of a parameterized transfer operator cocycle  (see for example Corollary~\ref{KORO}). We stress that unlike~\cite{CN,DS1}, we do not use the so-called Mather operator, since in our setting it is unclear on which space would this operator act on (due to nonuniform behavior).
	
	The reader will notice that our linear response results (Theorems~\ref{LR} and~\ref{LR1}) require a suitable  discretization of the parameter $\epsilon$. The explanation behind this is the following: for each $\epsilon \in I$, the set of random parameters $\omega$
	for which certain estimates (as well as the existence) on the equivariant measure $h_{\omega, \epsilon}$ hold is some set $\Omega_\epsilon$ with full probability. Hence, in order to study the behavior of $h_{\omega, \epsilon}$ when $\epsilon \to 0$, we would need to deal with the set $\bigcap_{\epsilon \in I} \Omega_\epsilon$, which could fail to even be measurable. Our statements of Theorems~\ref{LR} and~\ref{LR1} are tailored to overcome this subtle issue. We refer to~\cite[p.3]{DS1} for additional discussion.
	
	Finally, in an appendix, we present an example of a surprising phenomenon: a random system and an observable such that our Theorem \ref{LR1} applies, so that quenched response holds; but the annealed formula does not converge, implying that annealed linear response cannot hold. We hope that this example convince the reader of the richness of the class of examples considered in the present paper.
	
	\subsection{Organization of the paper}
	In Section~\ref{prel}, we recall some  basic material from the infinite-dimensional ergodic theory, which will be used in the paper. In Section~\ref{ABS}, we formulate our abstract version of the linear response result for random dynamical systems (see Theorem~\ref{LR}). Finally, in Section~\ref{PSC}, we introduce and study parameterized smooth expanding on average cocycles on the unit circle. We establish several auxiliary results whose aim is to verify that all assumptions of Theorem~\ref{LR} are satisfied in this setting. We conclude by establishing  an explicit linear response result for parameterized smooth expanding on average cocycles (see Theorem~\ref{LR1}), as a corollary of Theorem~\ref{LR}.
	\\ In the Appendix, we present an example of a random system and a smooth observable for which quenched response holds, but annealed response does not.

	\section{Preliminaries from multiplicative ergodic theory}\label{prel}
	The purpose of this section is to recall basic notions of the multiplicative ergodic theory that will be used throughout this paper.  Our presentation follows closely~\cite[Section 2.1]{DS}.
	
	We begin by recalling the notion of a linear cocycle. 
	\begin{definition}
		A tuple $\mathcal{R}=(\Omega, \mathcal{F}, \mathbb P, \sigma, \mathcal B, \L)$ is said to be a \emph{linear cocycle} or simply a \emph{cocycle} if the following conditions hold:
		\begin{itemize}
			\item $(\Omega,\mathcal F,\mathbb P)$ is a probability space and  $\sigma \colon \Omega \to \Omega$ is an invertible and ergodic $\mathbb P$-preserving transformation;
			\item $(\mathcal B, \|\cdot \|)$ is a Banach space and
			$\mathcal L\colon \Omega\to L(\mathcal B)$ is a family of bounded linear operators.
		\end{itemize}
		We say that $\mathcal L$ is the \emph{generator} of $\mathcal R$.
	\end{definition}

	We will often identify a cocycle $\mathcal R$ with its generator $\mathcal L$. 	Moreover, we will write $\mathcal L_\omega$ instead of $\mathcal L(\omega)$.
	\\Recall that a cocycle $\mathcal R$ is said to be \emph{strongly measurable} if $\Omega$ is a Borel subset of a separable, complete metric space, $\sigma$ is a homeomorphism and
	$\omega \mapsto \L_\omega h$ is measurable for each $h\in \mathcal B$.
	\\Set, for $\omega \in \Omega$ and $n\in \N$:
	\[
	\L_\omega^n:=\L_{\sigma^{n-1} \omega} \circ \ldots \circ  \L_{\sigma \omega} \circ \L_\omega.
	\]
	If $\L$ is strongly measurable and such that
	\begin{equation}\label{int}
		\int_\Omega \log^+ \|\L_\omega \|\, d\mathbb P(\omega) <+\infty,
	\end{equation}
	where $\|\L_\omega\|$ denotes the operator norm of $\L_\omega$, it follows from Kingman's subadditive ergodic theorem that, for $\mathbb P$-a.e. $\omega \in \Omega$, the following limits exist:
	\[
	\Lambda(\mathcal{R}) := \lim_{n\to \infty} \frac1n\log \| \L_\omega^n \|
	\]
	and 
	\[
	\kappa(\mathcal{R}): =  \lim_{n\to \infty} \frac1n\log ic( \L_\omega^n),
	\]
	where \[\text{ic}(A):=\inf\Big\{r>0 : \ A(B_{\mathcal B})~\text{admits a finite covering by balls of radius }r \Big\},\] $B_{\mathcal B}$ is the unit ball of $\mathcal B$, and 
	\[-\infty \le \kappa(\mathcal R) \le \Lambda (\mathcal R)<+\infty. \]
	We recall that $\Lambda (\mathcal R)$ and $\kappa(\mathcal R)$ are called the \emph{top Lyapunov exponent} and \emph{index of the compactness} of $\mathcal R$ respectively. 
	\begin{definition}
		Let $\mathcal{R}$ be a strongly measurable cocycle such that \eqref{int} holds. 
		We say that $\mathcal R$ is \emph{quasi-compact} if $\kappa (\mathcal R)<\Lambda (\mathcal R)$.
	\end{definition}
	The following result gives sufficient conditions under which a cocycle is quasi-compact (see~\cite[Lemma 2.1]{D}).
	\begin{lemma}\label{QC}
		Let $\mathcal{R}=(\Omega, \mathcal{F}, \mathbb P, \sigma, \mathcal B, \L)$ be a strongly measurable cocycle such that \eqref{int} holds. Furthermore, let  $(\mathcal B', |\cdot|)$ be a Banach space such that $\mathcal B\subset \mathcal B'$ and that the inclusion
		$(\mathcal B, \|\cdot\|) \hookrightarrow (\mathcal B',|\cdot|)$ is compact. Finally, assume the following:
		\begin{itemize}
			\item $\L_\omega$ can be extended continuously to $(\mathcal B', |\cdot|)$ for $\mathbb P$-a.e. $\omega \in \Omega$;
			\item there are  measurable functions $\alpha_\omega, \beta_\omega, \gamma_\omega: \Omega \to \R$ such that for $f\in \mathcal B$ and $\mathbb P$-a.e. $\omega \in \Omega$,
			\[
			\| \L_\omega f\| \leq \alpha_\omega \|f\| + \beta_\omega |f|
			\]
			and
			\[
			\| \L_\omega \|  \leq \gamma_\omega;
			\]
			\item we have that 
			\[
			\int_\Omega \log \alpha_\omega \, d\mathbb P(\omega) < \Lambda(\mathcal {R}) \text{ and } \int_\Omega \log \gamma_\omega \, d\mathbb P(\omega)<\infty.
			\]
		\end{itemize}
		Then, \[\kappa(\mathcal R) \le \int_\Omega \log \alpha_\omega \, d\mathbb P(\omega). \]  In particular, 
		$\mathcal R$ is quasi-compact. 
	\end{lemma}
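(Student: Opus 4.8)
The plan is to iterate the Lasota--Yorke inequality and then run a Hennion--Nussbaum type compactness argument to bound the index of compactness of the iterates $\L_\omega^n$. First I would establish, by induction on $n$, the iterated inequality: with $a_n(\omega):=\prod_{j=0}^{n-1}\alpha_{\sigma^j\omega}$ and $a_0\equiv 1$, for $\mathbb P$-a.e.\ $\omega$, every $n\ge1$ and every $f\in\mathcal B$,
\[
\|\L_\omega^n f\|\ \le\ a_n(\omega)\|f\|+\sum_{k=0}^{n-1}a_{n-1-k}(\sigma^{k+1}\omega)\,\beta_{\sigma^k\omega}\,|\L_\omega^k f| ;
\]
the base case is the hypothesis, and the inductive step comes from applying the one-step inequality to $\L_\omega^n f$ and regrouping the products of the $\alpha$'s ($\mathbb P$-invariance of $\sigma$ guarantees that the one-step inequality, as well as the bound $\|\L_\omega\|\le\gamma_\omega$ and the continuity of the extension, are available along the whole forward orbit of $\omega$). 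Since $\L_\omega$ extends continuously to $(\mathcal B',|\cdot|)$, for $f\in\mathcal B$ each $|\L_\omega^k f|$ is at most $|f|$ times a factor $\hat g_k(\omega)$ — a product of $k$ operator norms of extensions of the $\L_{\sigma^j\omega}$ on $\mathcal B'$ — which is finite for a.e.\ $\omega$; substituting this and collapsing the sum gives, for a.e.\ $\omega$ and all $n$,
\[
\|\L_\omega^n f\|\ \le\ a_n(\omega)\|f\|+b_n(\omega)|f|,\qquad f\in\mathcal B,
\]
with $0\le b_n(\omega)<\infty$.

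Next, I would bound $\ic(\L_\omega^n)$, which is the crux of the argument. As the embedding $(\mathcal B,\|\cdot\|)\hookrightarrow(\mathcal B',|\cdot|)$ is compact, the closed unit ball $B_{\mathcal B}$ is totally bounded for $|\cdot|$. Fix $\delta>0$, choose a finite $\delta$-net $f_1,\dots,f_N\in B_{\mathcal B}$ (so every $f\in B_{\mathcal B}$ satisfies $|f-f_i|<\delta$ for some $i$), and use the previous display together with $\|f-f_i\|\le 2$ to get
\[
\|\L_\omega^n f-\L_\omega^n f_i\|\ \le\ 2a_n(\omega)+\delta\,b_n(\omega).
\]
Thus $\L_\omega^n(B_{\mathcal B})$ is covered by $N$ balls of radius $2a_n(\omega)+\delta b_n(\omega)$, whence $\ic(\L_\omega^n)\le 2a_n(\omega)+\delta b_n(\omega)$; letting $\delta\downarrow0$ and using $b_n(\omega)<\infty$ yields $\ic(\L_\omega^n)\le 2a_n(\omega)$ for a.e.\ $\omega$ and all $n$. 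The point requiring the most care — and the main obstacle I anticipate — is exactly this: one must be sure the weak-norm coefficient $b_n(\omega)$, however large, is \emph{finite} for a.e.\ $\omega$, so that it genuinely vanishes once $\delta\to0$; the crude constant $2$ coming from $\|f-f_i\|\le2$ is then irrelevant, being washed out in the Cesàro average below.

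Finally, I would pass to the limit. Combining the formula for $\kappa(\mathcal R)$ recalled in Section~\ref{prel} with the bound above, for $\mathbb P$-a.e.\ $\omega$,
\[
\kappa(\mathcal R)=\lim_{n\to\infty}\frac1n\log\ic(\L_\omega^n)\ \le\ \lim_{n\to\infty}\frac1n\Big(\log2+\sum_{j=0}^{n-1}\log\alpha_{\sigma^j\omega}\Big)=\int_\Omega\log\alpha_\omega\,d\mathbb P(\omega),
\]
the last step being Birkhoff's pointwise ergodic theorem (ergodicity of $\sigma$ makes the limit deterministic; after replacing, if necessary, $\gamma_\omega$ by $\max\{\|\L_\omega\|,1\}$ and $\alpha_\omega$ by $\min\{\alpha_\omega,\gamma_\omega\}$ — which only sharpens the hypotheses — one may assume $\log^+\alpha_\omega\le\log^+\|\L_\omega\|$, which is integrable by \eqref{int}, so the ergodic averages converge to the stated value). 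This is precisely the asserted inequality $\kappa(\mathcal R)\le\int_\Omega\log\alpha_\omega\,d\mathbb P$, and since by hypothesis the right-hand side is strictly less than $\Lambda(\mathcal R)$, we conclude $\kappa(\mathcal R)<\Lambda(\mathcal R)$, i.e.\ $\mathcal R$ is quasi-compact.
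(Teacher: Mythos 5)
Your proof is correct and takes essentially the same route as the argument the paper invokes by citing \cite[Lemma 2.1]{D} (itself modelled on \cite[Lemma C.5]{GTQ}): iterate the Lasota--Yorke inequality, use the compact embedding to produce a finite $|\cdot|$-net of $B_{\mathcal B}$ and thereby bound $\ic(\L_\omega^n)$ by $2a_n(\omega)$, then conclude with Birkhoff. The two points you single out --- finiteness of the weak-norm coefficient $b_n(\omega)$ via the bounded extensions to $\mathcal B'$, and quasi-integrability of $\log\alpha_\omega$ so that the ergodic averages converge --- are exactly the details that need checking, and you handle both correctly.
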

	We are now in a position to recall the version of the multiplicative ergodic theorem (MET) established in~\cite{GTQ}.
	\begin{theorem}\label{MET}
		Let $\mathcal R=(\Omega,\mathcal F,\mathbb
		P,\sigma,\mathcal B,\mathcal L)$ be a  quasi-compact strongly measurable cocyle such that $\mathcal B$ is separable. Then, there exists $1\le l\le \infty$ and a sequence of exceptional
		Lyapunov exponents
		\[ \Lambda(\mathcal R)=\lambda_1>\lambda_2>\ldots>\lambda_l>\kappa(\mathcal R) \quad \text{(if $1\le l<\infty$)}\]
		or  \[ \Lambda(\mathcal R)=\lambda_1>\lambda_2>\ldots \quad \text{and} \quad \lim_{n\to\infty} \lambda_n=\kappa(\mathcal R) \quad \text{(if $l=\infty$).} \]
		Furthermore,  for $\mathbb P$-a.e. $\omega \in \Omega$ there exists a unique splitting (called the \textit{Oseledets splitting}) of $\mathcal B$ into closed subspaces
		\begin{equation}\label{eq:splitting}
			\mathcal B=V(\omega)\oplus\bigoplus_{j=1}^l Y_j(\omega),
		\end{equation}
		depending measurably on $\omega$ and  such that:
		\begin{enumerate} 
			\item  For each $1\leq j \leq l$, $Y_j(\omega)$ is finite-dimensional (i.e. $m_j:=\dim Y_j(\omega)<\infty$),  $Y_j$ is equivariant i.e. $\L_\omega Y_j(\omega)= Y_j(\sigma\omega)$ and for every $y\in Y_j(\omega)\setminus\{0\}$,
			\[\lim_{n\to\infty}\frac 1n\log\|\mathcal L_\omega^ny\|=\lambda_j.\]
			\item
			$V$ is weakly equivariant i.e. $\L_\omega V(\omega)\subseteq V(\sigma\omega)$ and
			for every $v\in V(\omega)$, \[\lim_{n\to\infty}\frac 1n\log\|\mathcal
			L_\omega^nv\|\le \kappa(\mathcal R).\]
		\end{enumerate}
	\end{theorem}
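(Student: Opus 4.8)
The plan is to follow the semi-invertible Oseledets construction of González-Tokman and Quas \cite{GTQ}: since the base $\sigma$ is invertible while the operators $\L_\omega$ need not be, one cannot simply quote the classical Oseledets theorem, but one can still peel off, one Lyapunov exponent at a time, a finite-dimensional equivariant ``fast'' direction, until only an infinite-dimensional weakly equivariant remainder is left, on which the growth is governed by the index of compactness. The finite-dimensional pieces are handled by essentially classical (linear-algebraic) arguments in the Grassmannian, while the remainder is controlled by a Lasota–Yorke / quasi-compactness estimate of the type underlying Lemma~\ref{QC}. Concretely, the first step is to \emph{locate the exponents}: one passes to the exterior power cocycles $\Lambda^k\L$ acting on (a completion of) $\Lambda^k\mathcal B$, checks that each is again strongly measurable and satisfies \eqref{int}, so that by Kingman's theorem $\tfrac1n\log\|(\Lambda^k\L)_\omega^n\|$ converges a.e.\ to a constant $\mu_k$; quasi-compactness of $\L$ propagates to $\Lambda^k\L$, and a Fischer–Courant (singular value) argument shows that the increments $\mu_k-\mu_{k-1}$ are non-increasing and, above the level $\kappa(\mathcal R)$, take only finitely many distinct values, each of finite multiplicity, accumulating only at $\kappa(\mathcal R)$. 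This produces the list $\lambda_1>\lambda_2>\cdots$, the multiplicities $m_j$, and the dichotomy $l<\infty$ versus $l=\infty$.

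Next I would build the \emph{slow filtration}. For each $j$ set
\[
V_j(\omega)=\Bigl\{v\in\mathcal B:\limsup_{n\to\infty}\tfrac1n\log\|\L_\omega^n v\|\le\lambda_j\Bigr\},
\]
together with $V(\omega)=\bigl\{v\in\mathcal B:\limsup_n\tfrac1n\log\|\L_\omega^n v\|\le\kappa(\mathcal R)\bigr\}$. Using the quasi-compactness estimate one verifies that $V_j(\omega)$ is closed, of codimension $d_{j-1}:=m_1+\cdots+m_{j-1}$, depends measurably on $\omega$, and is forward-invariant in the sense $\L_\omega V_j(\omega)\subseteq V_j(\sigma\omega)$; similarly for $V(\omega)$, which moreover carries the growth bound asserted in part (2) of the theorem. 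This step uses only ergodicity and does not require invertibility of $\sigma$.

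The \emph{fast spaces and the splitting} are where invertibility of $\sigma$ enters. For each $j$ choose a measurable family $U_j(\omega)$ of $d_j$-dimensional subspaces realizing, up to subexponential error, the maximal $d_j$-dimensional volume growth of $\L_\omega^n$. The key estimate is that the pulled-back subspaces $\L_{\sigma^{-n}\omega}^n U_j(\sigma^{-n}\omega)$ form a Cauchy sequence in the Grassmannian of $d_j$-planes — the angle between successive iterates staying bounded away from $0$ because of the gap $\lambda_j>\lambda_{j+1}$ (or $\lambda_j>\kappa(\mathcal R)$) — and hence converge to a measurable, equivariant $d_j$-plane $H_j(\omega)$. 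Setting $Y_j(\omega)=H_j(\omega)\cap V_{j+1}(\omega)$ then gives $\dim Y_j(\omega)=m_j$, equivariance $\L_\omega Y_j(\omega)=Y_j(\sigma\omega)$, and the precise growth rate $\lambda_j$ on $Y_j(\omega)\setminus\{0\}$; combined with $V(\omega)$ this yields the splitting \eqref{eq:splitting}. Uniqueness is immediate, since any two equivariant splittings realizing the same list of exponents must agree as the growth rates separate the summands.

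The main obstacle is the Cauchy/convergence argument in the third step. In infinite dimensions there is no polar decomposition or symmetric-operator spectral theorem to fall back on, so the contraction in the Grassmannian must be extracted directly from the exponential separation of volume-growth rates while carefully controlling the non-uniform, merely \emph{tempered}, error terms; this forces a Borel–Cantelli bookkeeping over exceptional $\omega$-sets and a tempering argument of Kingman/Birkhoff type. The remaining technical points — measurability of all the limiting objects, and, when $l=\infty$, passing to the limit in $j$ and checking $\lim_j\lambda_j=\kappa(\mathcal R)$ — are then routine approximation and exhaustion arguments. For the complete details we refer to \cite{GTQ}.
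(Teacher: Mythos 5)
The paper does not prove Theorem~\ref{MET} at all: it is quoted directly as the semi-invertible multiplicative ergodic theorem of Gonz\`alez-Tokman and Quas \cite{GTQ}, so the ``paper's approach'' is simply to cite that reference. Your proposal does the same thing — it gives a fair high-level sketch of the strategy actually used in \cite{GTQ} (volume-growth/exterior-power location of the exponents, the slow filtration, and the construction of the equivariant fast spaces as limits of pulled-back subspaces using invertibility of $\sigma$) and then defers to \cite{GTQ} for the genuinely hard Grassmannian convergence estimates, which is consistent with how the result is used here.
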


	\section{An abstract quenched linear response result for random dynamics}\label{ABS}
	The purpose of this section is to establish an abstract linear response result for random dynamics
	that in principle could be applied in a variety of situations.  However, our result is tailored to be applicable in the case of smooth expanding on average cocycles, which will be discussed in Section~\ref{PSC}.
	
	Let $(\Omega, \mathcal F, \mathbb P)$ be a probability space and let $\sigma \colon \Omega \to \Omega$ be an arbitrary invertible $\mathbb P$-preserving transformation. Moreover, let us assume that $\sigma$ is ergodic. We recall that a random variable $K\colon \Omega \to (0, \infty)$ is said to be \emph{tempered} if 
	\[
	\lim_{n\to \pm \infty} \frac 1 n \log K(\sigma^n \omega)=0, \quad \text{for $\mathbb P$-a.e. $\omega \in \Omega$.}
	\]
	By $\mathcal K$ we denote the set of all tempered random variables $K\colon \Omega \to (0, \infty)$. 
	\begin{remark}\label{TR}
		It is straightforward to verify that for $K_1, K_2\in \mathcal K$, we have that $K_1+K_2$, $K_1\cdot K_2$ and $\max \{K_1, K_2\}$ also belong to $\mathcal K$. Moreover,  as a simple consequence of Birkhoff's ergodic theorem, we conclude that each random variable  $K\colon \Omega \to (0, \infty)$ such that 
		$\log K\in L^1(\Omega, \mathbb P)$, belongs to $\mathcal K$. We will use these properties often throughout this paper,  and mostly without explicitly referring to this remark.
	\end{remark}
	
	In addition,  we will often use the following well-known result (see~\cite[Proposition 4.3.3]{Arnold}).
	\begin{proposition}\label{temp}
		Let $K\in \mathcal K$. Then, for each $a>0$ there exists $K_a\in \mathcal K$ such that 
		\begin{equation}\label{sw}
			K(\omega) \le K_a(\omega) \quad \text{and} \quad K_a(\sigma^n \omega) \le K_a(\omega)e^{a |n|}, 
		\end{equation}
		for $\mathbb P$-a.e. $\omega \in \Omega$ and $n\in \Z$.
	\end{proposition}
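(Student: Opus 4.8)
The plan is to construct $K_a$ explicitly from $K$ by taking a suitable supremum along orbits, weighted so as to "flatten out" the subexponential fluctuations of $K$ at the prescribed rate $a$. Concretely, I would set
\[
K_a(\omega) := \sup_{n\in\Z} \big( K(\sigma^n\omega)\, e^{-a|n|} \big).
\]
First I would check that this supremum is finite for $\mathbb{P}$-a.e.\ $\omega$: since $K\in\mathcal K$, temperedness gives $\frac1n\log K(\sigma^n\omega)\to 0$ as $n\to\pm\infty$, so for every $\omega$ in the full-measure temperedness set there is $N(\omega)$ with $\log K(\sigma^n\omega)\le \tfrac{a}{2}|n|$ for $|n|\ge N(\omega)$; hence $K(\sigma^n\omega)e^{-a|n|}\le e^{-a|n|/2}$ is bounded for large $|n|$, and the finitely many remaining terms are trivially bounded. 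So $K_a(\omega)<\infty$ a.e., and clearly $K_a(\omega)\ge K(\sigma^0\omega)=K(\omega)>0$, giving the first inequality in~\eqref{sw}.

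Next I would verify the cocycle-type bound $K_a(\sigma^m\omega)\le K_a(\omega)e^{a|m|}$. This is a direct reindexing:
\[
K_a(\sigma^m\omega)=\sup_{n\in\Z} K(\sigma^{n+m}\omega)e^{-a|n|}
=\sup_{k\in\Z} K(\sigma^{k}\omega)e^{-a|k-m|}
\le \sup_{k\in\Z} K(\sigma^{k}\omega)e^{-a|k|}e^{a|m|}
= K_a(\omega)e^{a|m|},
\]
using the triangle inequality $|k-m|\ge |k|-|m|$, i.e.\ $-|k-m|\le -|k|+|m|$. This establishes the second inequality in~\eqref{sw}.

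It remains to show $K_a\in\mathcal K$, i.e.\ that $K_a$ is measurable and tempered. Measurability follows since $K_a$ is a countable supremum of the measurable functions $\omega\mapsto K(\sigma^n\omega)e^{-a|n|}$ (measurability of each uses that $\sigma$ is measurable and $K$ is measurable). For temperedness, the bound just proved gives, for $\mathbb P$-a.e.\ $\omega$ and all $n$, both $K_a(\sigma^n\omega)\le K_a(\omega)e^{a|n|}$ and, applying the same bound with $\sigma^n\omega$ in place of $\omega$ and $-n$ in place of $m$, $K_a(\omega)\le K_a(\sigma^n\omega)e^{a|n|}$; hence $-a\le \frac1n\log K_a(\sigma^n\omega)\le a$ for $n>0$ (and symmetrically for $n<0$), so $\limsup_{n\to\pm\infty}\frac1n|\log K_a(\sigma^n\omega)|\le a$. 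Since $a>0$ was arbitrary this does not immediately give temperedness of a single $K_a$; the clean fix is to note that one may instead run the construction with $a/2$ in place of $a$, i.e.\ define $K_a(\omega):=\sup_{n}K(\sigma^n\omega)e^{-(a/2)|n|}$, which still satisfies $K_a(\omega)\ge K(\omega)$ and $K_a(\sigma^n\omega)\le K_a(\omega)e^{(a/2)|n|}\le K_a(\omega)e^{a|n|}$, while the two-sided bound now yields $|\frac1n\log K_a(\sigma^n\omega)|\le a/2$ for all large $|n|$, and since this holds for every $a>0$ we conclude $\frac1n\log K_a(\sigma^n\omega)\to 0$, i.e.\ $K_a\in\mathcal K$. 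The main (and essentially only) subtlety is this interplay between the fixed rate $a$ appearing in~\eqref{sw} and the vanishing-rate condition defining $\mathcal K$; everything else is bookkeeping with the triangle inequality.
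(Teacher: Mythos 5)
Your construction $K_a(\omega)=\sup_{n\in\Z}\bigl(K(\sigma^n\omega)e^{-a|n|}\bigr)$ is exactly the one the paper records in the remark following the proposition (the paper itself gives no proof, only a citation to Arnold), and your verifications of a.e.\ finiteness, of $K_a\ge K$, of the bound $K_a(\sigma^m\omega)\le K_a(\omega)e^{a|m|}$ via $|k-m|\ge|k|-|m|$, and of measurability are all correct. The genuine gap is in the last step, the temperedness of $K_a$. Trying to deduce it from the inequality $K_a(\sigma^n\omega)\le K_a(\omega)e^{a|n|}$ (and its reverse) cannot work: this only yields $\bigl|\tfrac1n\log K_a(\sigma^n\omega)\bigr|\le a+o(1)$, and your proposed fix of running the construction with $a/2$ merely replaces the constant $a$ by $a/2$ for a \emph{different} fixed function. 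The phrase ``since this holds for every $a>0$'' is a quantifier slip: each choice of the parameter produces its own function $K_a$, so no single function is shown to satisfy $\tfrac1n\log K_a(\sigma^n\omega)\to0$.

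The repair is to return to the temperedness of $K$ itself rather than to the derived bound on $K_a$. Fix $\epsilon\in(0,a)$. Temperedness of $K$ gives, for $\mathbb P$-a.e.\ $\omega$, a finite $C_\epsilon(\omega)$ with $K(\sigma^k\omega)\le C_\epsilon(\omega)e^{\epsilon|k|}$ for all $k\in\Z$. Then, since $\epsilon|k|-a|k-m|\le \epsilon|m|-(a-\epsilon)|k-m|\le\epsilon|m|$,
\[
K_a(\sigma^m\omega)=\sup_{k\in\Z}K(\sigma^{k}\omega)e^{-a|k-m|}\le C_\epsilon(\omega)\sup_{k\in\Z}e^{\epsilon|k|-a|k-m|}\le C_\epsilon(\omega)e^{\epsilon|m|},
\]
whence $\limsup_{m\to+\infty}\tfrac1m\log K_a(\sigma^m\omega)\le\epsilon$ and $\liminf_{m\to-\infty}\tfrac1m\log K_a(\sigma^m\omega)\ge-\epsilon$; letting $\epsilon\downarrow0$ and combining with the opposite bounds that follow from $K_a(\sigma^m\omega)\ge K(\sigma^m\omega)$ together with the temperedness of $K$, one gets $\lim_{m\to\pm\infty}\tfrac1m\log K_a(\sigma^m\omega)=0$, i.e.\ $K_a\in\mathcal K$. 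Everything else in your write-up stands.
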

	
	\begin{remark}For the convenience of the reader, we  recall that $K_a$ is given by
		\[
		K_a(\omega)=\sup_{n\in \Z} (K(\sigma^n \omega)e^{-a|n|}), \quad \omega \in \Omega.
		\]
	\end{remark}
	
	\subsection{Quenched statistical stability}\label{QSS}
	Let $\mathcal B_w=(\mathcal B_w, \lVert \cdot \rVert_w)$ and $\mathcal B_s=(\mathcal B_s, \lVert \cdot \rVert_s)$ be two Banach spaces such that $\mathcal B_s$ is embedded in $\mathcal B_w$ and that $\lVert \cdot \rVert_w \le \lVert \cdot \rVert_s$ on $\mathcal B_s$. 	
	
	In addition, let $I\subset (-1, 1)$ be an arbitrary interval such that $0\in I$ and assume that for $\omega \in \Omega$ and $\epsilon \in I$, $\mathcal L_{\omega, \epsilon}$ is a bounded operator on both spaces $\mathcal B_w$ and $\mathcal B_s$. We will denote $\mathcal L_{\omega, 0}$ simply by $\mathcal L_\omega$. Finally, we assume that  $\psi\in \mathcal B_s'$ is a nonzero functional, that admits a bounded extension to $\mathcal B_w$ (that we still denote by $\psi$) such that 
	\begin{equation}\label{psi} \L_{\omega,\epsilon}^*\psi=\psi \quad  \text{for $\omega \in \Omega$ and $\epsilon \in I$.} \end{equation}
	
	For $\omega \in \Omega$, $\epsilon \in I$ and $n\in \N$, set
	\begin{equation} 
		\L_{\omega, \epsilon}^n:=\L_{\sigma^{n-1} \omega, \epsilon} \circ \ldots \L_{\sigma \omega, \epsilon} \circ \L_{\omega, \epsilon}.
	\end{equation}
	The following is our quenched statistical stability result. 
	\begin{theorem}\label{SS}
		Let $(\epsilon_k)_{k\in \N}$ be a sequence in $I\setminus \{0\}$ such that $\lim_{k\to \infty} \epsilon_k=0$ and write $\mathcal L_{\omega, k}$ instead of $\mathcal L_{\omega, \epsilon_k}$. We assume that there exists a $\sigma$-invariant set of full measure $\Omega'\subset \Omega$ and $C\in \mathcal K$ such that:
		\begin{itemize}
			\item for $\omega \in \Omega'$ and $k\in \N_0$, there exists $h_{\omega, k}\in \mathcal B_s$ such that $\psi (h_{\omega, k})=1$,  \begin{equation}\label{rd} \L_{\omega, k} h_{\omega, k}=h_{\sigma \omega, k},\end{equation}  and 
			\begin{equation}\label{c1}
				\| h_{\omega, k} \|_s \le C(\omega);
			\end{equation}
			\item for $\omega \in \Omega'$, $h\in \mathcal B_s$ and $n\in \N$,
			\begin{equation}\label{c2}
				\|\L_{\sigma^{-n}\omega}^{n}h\|_w\le C(\omega)\|h \|_w;
			\end{equation}
			\item there exists $\lambda >0$ such that 
			\begin{equation}\label{c3}
				\|\L_{\omega}^{n}h\|_w\leq C(\omega) e^{-\lambda n}\|h\|_w,
			\end{equation}
			for $\omega \in \Omega'$, $n\in \N$ and $h\in V_s$, where
			\[
			V_s:=\{h \in \mathcal B_s: \psi(h)=0\};
			\]
			\item for $\omega \in \Omega'$, $k\in \N$ and $h\in \mathcal B_s$, 
			\begin{equation}\label{c4}
				\|(\L_{\omega,k}-\L_{\omega})h\|_w\leq C(\omega)|\epsilon_k|\|h\|_s. 
			\end{equation}
		\end{itemize}
		Then, there exist $\tilde C\in \mathcal K$ and $r>0$ such that 
		\begin{equation}\label{eq:statstability}
			\|h_{\omega,k}-h_\omega\|_w\le \tilde C(\omega)|\epsilon_k|^r,
		\end{equation}
		for $\mathbb P$-a.e. $\omega \in \Omega$ and $k\in \N$, where $h_\omega:=h_{\omega, 0}$. In particular,
		\[
		\lim_{k\to \infty} h_{\omega, k} =h_\omega \ \text{in $\mathcal B_w$,  for $\mathbb P$-a.e. $\omega \in \Omega$.}
		\]
		
	\end{theorem}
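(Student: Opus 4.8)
The plan is to derive the statistical stability estimate \eqref{eq:statstability} from a telescoping/Neumann-series type argument combined with the temperedness machinery of Proposition~\ref{temp}. First I would fix $\omega \in \Omega'$ and $k \in \N$, and study the difference $g_{\omega,k} := h_{\omega,k} - h_\omega$. Since $\psi(h_{\omega,k}) = \psi(h_\omega) = 1$, we have $g_{\omega,k} \in V_s$, which is exactly the subspace on which the contraction estimate \eqref{c3} applies. The equivariance relations \eqref{rd} give $\L_{\omega,k}h_{\omega,k} = h_{\sigma\omega,k}$ and $\L_\omega h_\omega = h_{\sigma\omega}$, so for any $n \in \N$, iterating backwards from $\sigma^n\omega$ to $\omega$, I would write
\[
g_{\sigma^n\omega, k} = \L_{\sigma^{n-1}\omega, k}\cdots\L_{\omega,k} h_{\omega,k} - \L_{\sigma^{n-1}\omega}\cdots\L_\omega h_\omega.
\]
The key algebraic step is to telescope the difference of the two $n$-fold products: writing $\L_{\omega,k}^n - \L_\omega^n = \sum_{j=0}^{n-1} \L_{\sigma^{j+1}\omega}^{\,n-j-1}\,(\L_{\sigma^j\omega,k} - \L_{\sigma^j\omega})\,\L_{\omega,k}^{\,j}$ (a standard resolvent-type identity), I obtain
\[
g_{\sigma^n\omega,k} = \L_\omega^n g_{\omega,k} + \sum_{j=0}^{n-1} \L_{\sigma^{j+1}\omega}^{\,n-j-1}(\L_{\sigma^j\omega,k} - \L_{\sigma^j\omega}) h_{\sigma^j\omega, k},
\]
using $\L_{\omega,k}^j h_{\omega,k} = h_{\sigma^j\omega,k}$. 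Replacing $\omega$ by $\sigma^{-n}\omega$ throughout, this expresses $g_{\omega,k}$ in terms of an unperturbed propagator applied to $g_{\sigma^{-n}\omega,k}$ (which lies in $V_s$, so \eqref{c3} gives exponential decay $C(\omega)e^{-\lambda n}\|g_{\sigma^{-n}\omega,k}\|_w$) plus a sum of error terms.

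Next I would estimate each piece. For the error sum, each summand is of the form $\L_{\sigma^{-n+j+1}\omega}^{\,n-j-1}$ applied to $(\L_{\sigma^{-n+j}\omega,k} - \L_{\sigma^{-n+j}\omega})h_{\sigma^{-n+j}\omega,k}$; by \eqref{c4} the inner vector has $\mathcal B_w$-norm at most $C(\sigma^{-n+j}\omega)|\epsilon_k|\,\|h_{\sigma^{-n+j}\omega,k}\|_s \le C(\sigma^{-n+j}\omega)^2|\epsilon_k|$ using \eqref{c1}, and then \eqref{c2} bounds the propagated norm by $C(\sigma^{j+1-n}\omega)$ times that (being careful to split off whether $\psi$ of the inner vector vanishes — it does, since $\psi(\L_{\omega,k}h_{\omega,k}) = \psi(h_{\omega,k}) = 1 = \psi(\L_\omega h_\omega)$ by \eqref{psi}, so the difference lies in $V_w := \{\psi = 0\}$; one should work either with \eqref{c2} uniformly or, better, with a version of \eqref{c3} at the $\mathcal B_w$ level — here I expect to only get $C(\omega)$ control from \eqref{c2}, not decay, since \eqref{c3} is stated on $V_s$). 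The crux is that the error sum does not obviously converge: we get roughly $n$ terms each bounded by $|\epsilon_k|$ times tempered factors along the orbit. This is precisely why a naive choice of $n$ fails and one must optimize $n = n(k)$ as a function of $\epsilon_k$. Using Proposition~\ref{temp}, I would replace $C$ by a slowly-varying $C_a \in \mathcal K$ with $C_a(\sigma^m\omega) \le C_a(\omega)e^{a|m|}$ for any small $a > 0$; then the first term is bounded by $C_a(\omega)^{O(1)}e^{(a-\lambda)n}$ and the error sum by $C_a(\omega)^{O(1)}|\epsilon_k| \sum_{j=0}^{n-1}e^{a(n-j)}e^{2a(n-j)} \le C_a(\omega)^{O(1)}|\epsilon_k|\,e^{O(a)n}$.

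The final optimization is the heart of the argument: choosing $a$ small enough that $\lambda - a > 0$ and the error-sum exponent $\rho a < \lambda - a$ (for the implied constant $\rho$), then picking $n = n(k) \asymp -\log|\epsilon_k|$, say $n(k) = \lceil -\theta\log|\epsilon_k|\rceil$ for a suitable $\theta > 0$, balances $e^{-(\lambda - a)n(k)} \asymp |\epsilon_k|^{(\lambda-a)\theta}$ against $|\epsilon_k|\,e^{\rho a n(k)} \asymp |\epsilon_k|^{1 - \rho a \theta}$. With $\theta$ and $a$ chosen so both exponents are positive, one gets $\|g_{\omega,k}\|_w \le \tilde C(\omega)|\epsilon_k|^r$ with $r := \min\{(\lambda - a)\theta,\ 1 - \rho a\theta\} > 0$ and $\tilde C$ a fixed power of $C_a$, hence $\tilde C \in \mathcal K$ by Remark~\ref{TR}. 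I would also need to check that the bound on $\|g_{\sigma^{-n(k)}\omega,k}\|_w$ — which uses \eqref{c1} at the point $\sigma^{-n(k)}\omega$ — is absorbed: $\|g_{\sigma^{-n}\omega,k}\|_w \le \|h_{\sigma^{-n}\omega,k}\|_s + \|h_{\sigma^{-n}\omega}\|_s \le 2C(\sigma^{-n}\omega) \le 2C_a(\omega)e^{an}$, which merges into the $e^{(a-\lambda)n}$ factor. The main obstacle I anticipate is precisely this delicate balancing of the exponential rates against the tempered (sub-exponentially growing) constants: because the system is only "expanding on average" there is no uniform-in-$\omega$ control, so the whole scheme hinges on the fact that $\lambda$ in \eqref{c3} is a genuine positive constant (not $\omega$-dependent) while everything else is tempered, letting the $a \to 0$ slack in Proposition~\ref{temp} win. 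The "in particular" statement then follows immediately since $|\epsilon_k|^r \to 0$ and $\tilde C(\omega) < \infty$ for a.e.\ $\omega$.
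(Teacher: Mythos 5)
Your proposal is correct and follows essentially the same route as the paper: the same telescoping identity for $\L_{\sigma^{-N}\omega,k}^{N}-\L_{\sigma^{-N}\omega}^{N}$, the same use of \eqref{c1}--\eqref{c4} (exponential decay via \eqref{c3} on the $V_s$-term, only the bounded propagation \eqref{c2} on the error sum), the same temperedness bookkeeping via Proposition~\ref{temp}, and the same optimization $N\asymp|\log|\epsilon_k||$. The only minor discrepancy is that \eqref{c2} gives the constant $C(\omega)$ at the terminal fiber rather than $C(\sigma^{j+1-n}\omega)$ at the initial one, but your lossier bookkeeping still yields a sub-exponential factor and does not affect the argument.
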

		\begin{remark}
		Our Theorem \ref{SS} can be applied to expanding on average cocycles of piecewise monotone maps as considered by \cite{Buzzi} and thus recover the results of \cite{FGTQ}. 
	\end{remark}
	\begin{proof}
		Take an arbitrary $a$ such that $0<a< \min \{\frac 1 2 , \frac{\lambda}{2} \}$, and let $C_a$ be the tempered random variable given by Proposition~\ref{temp} (corresponding to $C$). For an arbitrary $N\in \N$, by using the property~\eqref{rd}, 
		we have that 
		\begin{equation}\label{a1}
			\begin{split}
				\|h_{\omega,k}-h_{\omega}\|_w&=\|\L_{\sigma^{-N}\omega,k}^{N}h_{\sigma^{-N}\omega,k}-\L_{\sigma^{-N}\omega}^{N}h_{\sigma^{-N}\omega} \|_w\\
				&\leq \|(\L_{\sigma^{-N}\omega,k}^{N}-\L_{\sigma^{-N}\omega}^{N})h_{\sigma^{-N}\omega,k}\|_w+\|\L_{\sigma^{-N}\omega}^{N}(h_{\sigma^{-N}\omega,k}-h_{\sigma^{-N}\omega})\|_w,
			\end{split}
		\end{equation}
		for $\omega \in \Omega'$ and $k\in \N$. Since $\psi(h_{\sigma^{-N}\omega,k})=\psi(h_{\sigma^{-N}\omega})=1$, we have that  $h_{\sigma^{-N}\omega,k}-h_{\sigma^{-N}\omega}\in V_s$, and thus  it follows from~\eqref{c1} and~\eqref{c3} that 
		\begin{equation}\label{a2} \|\L_{\sigma^{-N}\omega}^{N}(h_{\sigma^{-N}\omega,k}-h_{\sigma^{-N}\omega})\|_w\leq 2C(\sigma^{-N}\omega)^2 e^{-\lambda N}, \end{equation}
		for $\omega \in \Omega'$ and $k\in \N$. Moreover, since
		\[\L_{\sigma^{-N}\omega,k}^{N}-\L_{\sigma^{-N}\omega}^{N}=\sum_{j=1}^N\L_{\sigma^{-(N-j)}\omega}^{N-j}(\L_{\sigma^{-N+j-1}\omega,k}-\L_{\sigma^{-N+j-1}\omega})\L^{j-1}_{\sigma^{-N}\omega,k},\]
		by~\eqref{rd}, \eqref{c1}, \eqref{c2} and~\eqref{c4} we have that 
		\begin{align*}
			&\|(\L_{\sigma^{-N}\omega,k}^{N}-\L_{\sigma^{-N}\omega}^{N})h_{\sigma^{-N}\omega,k}\|_w\\ 
			&\leq \sum_{j=1}^N\|\L_{\sigma^{-(N-j)}\omega}^{N-j}\|_{B_w\to B_w}\|(\L_{\sigma^{-N+j-1}\omega,k}-\L_{\sigma^{-N+j-1}\omega})h_{\sigma^{-N+j-1}\omega,k}\|_w\\
			&\leq C(\omega)|\epsilon_k|\ \sum_{j=1}^N C(\sigma^{-N+j-1}\omega)^2 \\
			&\le C(\omega)|\epsilon_k| N\max_{0\le j\le N-1}C(\sigma^{-N+j}\omega)^2.
		\end{align*}
		Hence,  by using~\eqref{sw} (for $C$ and $C_a$ instead of $K$ and $K_a$ respectively), we conclude that 
		\begin{equation}\label{a3}
			\|(\L_{\sigma^{-N}\omega,k}^{N}-\L_{\sigma^{-N}\omega}^{N})h_{\sigma^{-N}\omega,k}\|_w \le C(\omega)C_a(\omega)^2 |\epsilon_k| Ne^{2aN},
		\end{equation}
		for $\mathbb P$-a.e. $\omega \in \Omega$ and $k\in \N$. It follows from~\eqref{sw}, \eqref{a1}, \eqref{a2} and~\eqref{a3} that 
		\[
		\|h_{\omega,k}-h_{\omega}\|_w \le 2C_a(\omega)^2 e^{-(\lambda-2a)N}+C(\omega)C_a(\omega)^2 |\epsilon_k| Ne^{2aN},
		\]
		for $\mathbb P$-a.e. $\omega \in \Omega$ and $k\in \N$. Choosing $N=\lfloor | \log |\epsilon_k| |\rfloor$, yields the desired result. 
	\end{proof}
	
	\begin{remark}
		We stress that the requirement that conditions~\eqref{c1}, \eqref{c2}, \eqref{c3} and~\eqref{c4} hold with the same $C\in \mathcal K$ does not impose any restriction. Indeed, if conditions \eqref{c1}-\eqref{c4} are fulfilled with $C_i\in \mathcal K$, for $i\in \{1, 2, 3, 4\}$ respectively, then we can take $C=\max \{C_1, C_2, C_3, C_4\}$ (see Remark~\ref{TR}).
	\end{remark}

	\subsection{Quenched linear response}
	In order to formulate our abstract quenched linear response result for random dynamics, besides requiring the existence of spaces $\mathcal B_w$ and $\mathcal B_s$ as in Subsection~\ref{QSS}, we also require the existence of a third space $\mathcal B_{ss}=(\mathcal B_{ss}, \lVert \cdot \rVert_{ss})$ that can be embedded in $\mathcal B_s$
	and such that $\lVert \cdot \rVert_s \le \lVert \cdot \rVert_{ss}$ on $\mathcal B_{ss}$.  
	
	As in Subsection~\ref{QSS}, we assume that $\psi$ is a nonzero functional on $\mathcal B_s$, and we shall also assume that it admits a bounded extension to $\mathcal B_w$. We still denote its restriction (resp. extension) to $\mathcal B_{ss}$ (resp. $\mathcal B_w$) by $\psi$. We denote $V_{ss}:=\{h\in \mathcal B_{ss}: \psi(h)=0\}$ and let $V_s$ and $V_w$ denote images of $V_{ss}$ via the natural injections $B_{ss}\hookrightarrow B_s$ and $B_{ss}\hookrightarrow B_w$ respectively. 
	
	Furthermore, we let $(\mathcal L_{\omega,\epsilon})_{\omega \in \Omega,\epsilon\in I}$ be a family such that each
	$\mathcal L_{\omega,\epsilon}$ is a bounded operator on each of those three spaces satisfying~\eqref{psi}, where $I\subset (-1, 1)$ is an interval that contains $0$. We continue to denote $\mathcal L_{\omega, 0}$ by $\mathcal L_\omega$. Moreover, we suppose that $\omega \mapsto \L_{\omega}$ is strongly measurable on $\mathcal B_w$, i.e. for each $h\in \mathcal B_w$ the map $\omega \mapsto \mathcal L_{\omega} h$ is measurable. 
	
	\begin{theorem}\label{LR}
		Let $(\epsilon_k)_{k\in \N}$ be a sequence in $I\setminus \{0\}$ such that $\lim_{k\to \infty} \epsilon_k=0$ and write $\mathcal L_{\omega, k}$ instead of $\mathcal L_{\omega, \epsilon_k}$. We assume that there exists a  $\sigma$-invariant set of full measure $\Omega'\subset \Omega$ and $C\in \mathcal K$ such that:
		\begin{itemize}
			\item for $\omega \in \Omega'$ and $k\in \N_0$, there exists $h_{\omega, k}\in \mathcal B_{ss}$ such that $\L_{\omega, k} h_{\omega, k}=h_{\sigma \omega, k}$, $\psi (h_{\omega, k})=1$ and 
			\begin{equation}\label{c11}
				\| h_{\omega, k} \|_{ss} \le C(\omega).
			\end{equation}
			Moreover,  suppose that $\omega \mapsto h_{\omega, 0}$ is measurable;
			\item for $\omega \in \Omega'$, $h\in \mathcal B_i$, $i\in \{w,s\}$ and $n\in \N$,
			\begin{equation}\label{c22}
				\|\L_{\sigma^{-n}\omega}^{n}h\|_i\le C(\omega)\|h \|_i;
			\end{equation}
			\item  there exists $\lambda >0$ such that for any $\omega \in \Omega'$,  $n\in \N$ and $i\in\{w,s\}$, 
			\begin{equation}\label{c33}
				\| \L_\omega^{n} h\|_{i} \le C(\omega)e^{-\lambda n} \|h \|_{i} \quad \text{for $n\in \N$ and $h \in V_{i}$;}
			\end{equation}
			\item for $\omega \in \Omega'$,  $h\in \mathcal B_s$ and $k\in \N$,
			\begin{equation}\label{c44}
				\| (\L_{\omega, k}-\L_\omega)h \|_w \le C(\omega)|\epsilon_k|\cdot \|h \|_{s}.
			\end{equation}
			Moreover, 
			\begin{equation}\label{c55}
				\| (\L_{\omega,k}-\L_\omega)h \|_s \le C(\omega)|\epsilon_k|\cdot \|h \|_{ss},
			\end{equation}
			for $\omega \in \Omega'$, $h\in \mathcal B_{ss}$ and $k\in \N$;
			\item for $\omega \in \Omega'$, there exists a bounded operator $\hat \L_\omega: B_{ss}\to V_w$ such that 
			\begin{equation}\label{c66}
				\bigg{\|} \frac{1}{\epsilon_k} (\L_{\omega,k}-\L_{\omega})h-\hat \L_\omega h \bigg{\|}_w\le C(\omega)|\epsilon_k|\|h\|_{ss},
			\end{equation}
			for $\omega \in \Omega'$, $h\in \mathcal B_{ss}$ and  $k\in \N$.  Moreover, assume that for each $k\in \N$ and $h\in \mathcal B_{ss}$, $\omega \mapsto \hat \L_\omega h$ is measurable. 
		\end{itemize}
		Then, there exist a measurable $\omega\in\Omega\mapsto\hat h_\omega\in V_w$, $K\in \mathcal K$ and $r>0$ such that 
		\begin{equation}\label{eq:diffequivdens}
			\left\| \frac{1}{\epsilon_k}(h_{\omega,k}-h_{\omega})-\hat h_\omega\right\|_w \le K(\omega)|\epsilon_k|^r,
		\end{equation}
		for $\mathbb P$-a.e. $\omega \in \Omega$ and $k\in \N$, where $h_\omega:=h_{\omega, 0}$. In addition, 
		\begin{equation}\label{eq:linresp}
			\hat h_\omega:=\sum_{n=0}^\infty\L_{\sigma^{-n}\omega}^{n}\hat\L_{\sigma^{-(n+1)}\omega}h_{\sigma^{-(n+1)}\omega}, \quad \text{for $\mathbb P$-a.e. $\omega \in \Omega$.}
		\end{equation}
	\end{theorem}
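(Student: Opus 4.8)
The plan is to mimic the proof of Theorem~\ref{SS}, but now carry out the estimates at two levels of regularity simultaneously: in $\mathcal B_w$ we control the full difference $h_{\omega,k}-h_\omega$, and in $\mathcal B_s$ we control it using the strong-to-strong statistical stability (which follows from Theorem~\ref{SS} applied with the triple $(\mathcal B_s, \mathcal B_{ss})$ in place of $(\mathcal B_w, \mathcal B_s)$, using \eqref{c22}, \eqref{c33} for $i=s$, \eqref{c55} and \eqref{c11}). This yields $\|h_{\omega,k}-h_\omega\|_s \le \tilde C(\omega)|\epsilon_k|^{r_0}$ for some $r_0>0$ and $\tilde C\in\mathcal K$; this bound is the substitute for the crude $\|h_{\sigma^{-N}\omega,k}\|_{ss}\le C$ bound and is what lets the linearization error terms be summable.

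Next I would expand, exactly as in \eqref{a1}, for $\omega\in\Omega'$ and $N\in\mathbb N$:
\[
\frac{1}{\epsilon_k}(h_{\omega,k}-h_\omega) - \hat h_\omega^{(N)}
= \frac{1}{\epsilon_k}\big(\L_{\sigma^{-N}\omega,k}^N - \L_{\sigma^{-N}\omega}^N\big)h_{\sigma^{-N}\omega,k}
+ \frac{1}{\epsilon_k}\L_{\sigma^{-N}\omega}^N\big(h_{\sigma^{-N}\omega,k}-h_{\sigma^{-N}\omega}\big) - \hat h_\omega^{(N)},
\]
where $\hat h_\omega^{(N)}:=\sum_{n=0}^{N-1}\L_{\sigma^{-(n+1)}\omega}^{n}\hat\L_{\sigma^{-(n+1)}\omega}h_{\sigma^{-(n+1)}\omega}$ is the truncation of \eqref{eq:linresp}. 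Using the telescoping identity for $\L_{\sigma^{-N}\omega,k}^N - \L_{\sigma^{-N}\omega}^N$ already displayed in the proof of Theorem~\ref{SS}, I would replace each factor $\L_{\sigma^{-N+j-1}\omega,k}-\L_{\sigma^{-N+j-1}\omega}$ by $\epsilon_k\hat\L_{\sigma^{-N+j-1}\omega}$ plus an $O(\epsilon_k^2)$ error controlled by \eqref{c66}, and replace each $h^{(j-1)}_{\sigma^{-N}\omega,k}$ (the iterate of $h_{\sigma^{-N}\omega,k}$ under $\L_{\cdot,k}$) by $h_{\sigma^{-N+j-1}\omega,k}$, then by $h_{\sigma^{-N+j-1}\omega}$ up to a statistical-stability error measured in $\mathcal B_s$ (so that the subsequent operator can absorb it in $\mathcal B_w$). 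The leading terms reassemble precisely into $\hat h_\omega^{(N)}$, modulo one boundary term of the form $\L_{\sigma^{-N}\omega}^N(\text{something in }V_s)$ which decays like $e^{-\lambda N}$ by \eqref{c33}. The second term on the right, $\frac{1}{\epsilon_k}\L_{\sigma^{-N}\omega}^N(h_{\sigma^{-N}\omega,k}-h_{\sigma^{-N}\omega})$, lies in $\L_{\sigma^{-N}\omega}^N V_w$ and is bounded by $C(\omega)e^{-\lambda N}\cdot\frac{1}{\epsilon_k}\|h_{\sigma^{-N}\omega,k}-h_{\sigma^{-N}\omega}\|_w$; plugging in \eqref{eq:statstability} gives $\le \tilde C\, C(\omega)\,e^{-\lambda N}|\epsilon_k|^{r-1}$, which is handled by choosing $N$ comparable to $|\log|\epsilon_k||$ as before.

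Assembling all the error contributions, each term is of one of two types: an exponentially small term $\lesssim e^{-\lambda' N}$ (times a tempered factor), or a polynomially small term $\lesssim N^2 |\epsilon_k|^{r'} e^{aN}$ (times a tempered factor), arising from summing $N$ terms each carrying an $O(\epsilon_k^{1+r'})$ linearization/stability error and an operator norm bounded via \eqref{c22} after the Proposition~\ref{temp} trick. Optimizing $N\asymp|\log|\epsilon_k||$ balances the two and yields \eqref{eq:diffequivdens} with a new exponent $r>0$ and $K\in\mathcal K$. Separately, one must check that $\hat h_\omega:=\lim_{N\to\infty}\hat h_\omega^{(N)}$ exists in $V_w$: this is a geometric series since $\|\L_{\sigma^{-(n+1)}\omega}^n \hat\L_{\sigma^{-(n+1)}\omega}h_{\sigma^{-(n+1)}\omega}\|_w \le C\,e^{-\lambda n}\|\hat\L_{\sigma^{-(n+1)}\omega}\|\,\|h_{\sigma^{-(n+1)}\omega}\|_{ss}$ (using that $\hat\L$ maps into $V_w$ and \eqref{c33}, \eqref{c11}), with the tempered factors growing subexponentially, so convergence in $\mathcal B_w$ is clear and $\hat h_\omega\in V_w$ since $V_w$ is closed. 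Measurability of $\omega\mapsto\hat h_\omega$ follows from the measurability hypotheses on $\omega\mapsto h_{\omega,0}$, $\omega\mapsto\hat\L_\omega h$, and $\omega\mapsto\L_\omega$, together with the fact that a pointwise limit of measurable maps into a Banach space is measurable.

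The main obstacle is bookkeeping the two-speed error propagation correctly: in the telescoped sum, when I replace an inner $h_{\cdot,k}$ by the unperturbed $h_\cdot$, the error must be measured in $\mathcal B_s$ (not $\mathcal B_w$) so that the remaining product of operators $\L^{N-j}_{\cdot}$ acting on it can still be bounded by \eqref{c22} — but then I am forced to use the \emph{strong}-norm statistical stability, which itself is only available because the hypotheses \eqref{c33} and \eqref{c55} were stated at the $i=s$ level and with the extra space $\mathcal B_{ss}$; keeping track of exactly which norm each intermediate quantity is measured in, and ensuring every such quantity lies in the appropriate $V_i$ so that the decay estimate \eqref{c33} (rather than merely \eqref{c22}) applies, is where the argument is delicate. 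The rest is a direct, if lengthy, generalization of the computation in the proof of Theorem~\ref{SS}.
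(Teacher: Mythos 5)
Your strategy is the paper's: telescope $h_{\omega,k}-h_\omega$ along the backward orbit, linearize each factor $\L_{\cdot,k}-\L_{\cdot}$ via \eqref{c66}, and control the replacement of $h_{\cdot,k}$ by $h_{\cdot}$ through the strong-norm statistical stability obtained by running Theorem~\ref{SS} on the pair $(\mathcal B_s,\mathcal B_{ss})$ with \eqref{c55}. Your identification of that last point as the reason the hypotheses are stated on three nested spaces is exactly right, as is your verification that the series \eqref{eq:linresp} converges in $V_w$.

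The one step that does not survive scrutiny as written is your treatment of the boundary term $\frac{1}{\epsilon_k}\L_{\sigma^{-N}\omega}^N(h_{\sigma^{-N}\omega,k}-h_{\sigma^{-N}\omega})$. Bounding it by $e^{-\lambda N}|\epsilon_k|^{r-1}$ (up to tempered factors) and taking $N\asymp|\log|\epsilon_k||$ yields $|\epsilon_k|^{\lambda+r-1}$, which does not tend to $0$ unless $\lambda+r>1$ --- and neither $\lambda$ nor the exponent $r$ produced by Theorem~\ref{SS} is at your disposal. You could repair this by taking $N=M|\log|\epsilon_k||$ with $M>(1-r)/\lambda$ and only afterwards choosing the temperedness parameter $a$ small relative to $M$ in the $Ne^{aN}|\epsilon_k|^{r'}$ error terms; but the paper avoids the balancing altogether. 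For fixed $k$ it lets $N\to\infty$, so the boundary term, which is $O(e^{-\lambda N}/|\epsilon_k|)$ with $k$ fixed, vanishes and one obtains the \emph{exact} identity
\[
\frac{1}{\epsilon_k}(h_{\omega,k}-h_\omega)=\sum_{n=0}^\infty \L_{\sigma^{-n}\omega}^{n}\Big(\tfrac{1}{\epsilon_k}(\L_{\sigma^{-(n+1)}\omega,k}-\L_{\sigma^{-(n+1)}\omega})h_{\sigma^{-(n+1)}\omega,k}\Big).
\]
Subtracting \eqref{eq:linresp} then leaves two infinite series whose $n$-th summands lie in $\ker\psi$ (since $\psi\circ(\L_{\cdot,k}-\L_{\cdot})=0$ by \eqref{psi} and $\hat\L_\omega$ maps into $V_w$ by hypothesis), so the contraction \eqref{c33} --- not merely \eqref{c22} --- applies to every outer $\L^n_{\sigma^{-n}\omega}$; each summand is then $O(e^{-\lambda n}|\epsilon_k|^{\min(1,r)})$ times tempered factors, and the sums converge with no optimization in $N$ at all. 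This also resolves the ``which of \eqref{c22}/\eqref{c33} applies where'' worry you flag at the end: with the exact series, every term qualifies for \eqref{c33}, and the finite-$N$ bookkeeping disappears. (A minor slip: the operator in your $\hat h^{(N)}_\omega$ should be $\L^n_{\sigma^{-n}\omega}$, not $\L^n_{\sigma^{-(n+1)}\omega}$.)
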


		\begin{remark}
		Theorem \ref{LR} is a direct generalization to the quenched case of the `quantitative stability for fixed points of Markov operators' result in \cite{GS}: when $\Omega=\{\omega_0\}$ is reduced to a singleton, we recover \cite[Theorem 1]{GS} (see also \cite[Theorem 3]{GG}).
	\end{remark}
	
	\begin{remark}
		The main novelty of Theorem~\ref{LR} when compared with the similar results in the literature (see for example~\cite[Theorem 12]{DS1}) is that we allow for $C$ to depend on $\omega$. Our requirement that $\lambda$ appearing in~\eqref{c33} is independent on $\omega$ does not represent a serious restriction: indeed, in applications $\lambda$ can be expressed in terms of the second Lyapunov exponent associated with the cocycle $(\L_\omega)_{\omega \in \Omega}$ acting on $\mathcal B_s$ and $\mathcal B_w$, which due to our ergodicity assumption for the base space $(\Omega, \mathcal F, \mathds P, \sigma)$ is a deterministic quantity.  Finally, we note that in our applications, the requirement that $C\in \mathcal K$ will again be fulfilled by applying known results from the multiplicative ergodic theory. We refer to Lemma \ref{aux} and Proposition~\ref{prop} for details.
	\end{remark}
	
	\begin{proof}
		We first show that $\hat h_\omega$ given by~\eqref{eq:linresp} is well-defined for $\mathbb P$-a.e. $\omega \in \Omega$. Indeed, observe that~\eqref{c33}, \eqref{c44} and~\eqref{c66} imply that 
		\[
		\begin{split}
			&\bigg{\lVert} \sum_{n=0}^\infty\L_{\sigma^{-n}\omega}^{n}\hat\L_{\sigma^{-(n+1)}\omega}h_{\sigma^{-(n+1)}\omega} \bigg{\rVert}_w  \\
			&\le \sum_{n=0}^\infty C(\sigma^{-n} \omega)e^{-\lambda n} \|\hat\L_{\sigma^{-(n+1)}\omega}h_{\sigma^{-(n+1)}\omega} \|_w \\
			&\le \sum_{n=0}^\infty C(\sigma^{-n} \omega)e^{-\lambda n}\bigg{\lVert}\frac{1}{\epsilon_1} (\L_{\sigma^{-(n+1)} \omega,1}-\L_{ \sigma^{-(n+1)}\omega})h_{\sigma^{-(n+1)}\omega} \bigg \|_w \\
			&\phantom{\le}+\sum_{n=0}^\infty C(\sigma^{-n} \omega)e^{-\lambda n}\bigg \lVert \bigg (\frac{1}{\epsilon_1} (\L_{\sigma^{-(n+1)}\omega, 1}-\L_{\sigma^{-(n+1)}\omega})-\hat \L_{\sigma^{-(n+1)} \omega} \bigg )h_{\sigma^{-(n+1)}\omega} \bigg \|_w \\
			&\le \sum_{n=0}^\infty C(\sigma^{-n} \omega)e^{-\lambda n}C(\sigma^{-(n+1)} \omega) \|h_{\sigma^{-(n+1)}\omega} \|_{ss}\\
			&\phantom{\le}+|\epsilon_1| \sum_{n=0}^\infty C(\sigma^{-n} \omega)e^{-\lambda n}C(\sigma^{-(n+1)} \omega) \|h_{\sigma^{-(n+1)}\omega} \|_{ss} \\
			&\le (1+|\epsilon_1|)\sum_{n=0}^\infty C(\sigma^{-n} \omega)e^{-\lambda n}C(\sigma^{-(n+1)} \omega)^2 \\
			&\le (1+|\epsilon_1|)e^2 C_{\lambda/4} (\omega)^3   \sum_{n=0}^\infty e^{-\frac{\lambda}{4} n} <+\infty,
		\end{split}
		\]
		for $\mathbb P$-a.e. $\omega \in \Omega$, where $C_{\lambda/4}$ is given by Proposition~\ref{temp}. Hence, $\hat h_\omega$ is well-defined for $\mathbb P$-a.e. $\omega \in \Omega$. In addition, due to our measurability assumptions for maps $\omega \mapsto \L_\omega$, $\omega \mapsto \hat \L_\omega$ and 
		$\omega \mapsto h_\omega$, we conclude that $\omega \mapsto \hat h_\omega$ is measurable.
		
		In order to establish the statement of the theorem, we begin by observing that by introducing $   \L_{\sigma^{-1}\omega} h_{\sigma^{-1} \omega,k} $, we have that 
		\begin{equation}\label{i}
			\begin{split}
				h_{\omega,k}-h_\omega & = \L_{\sigma^{-1}\omega,k}h_{\sigma^{-1} \omega,k} - \L_{\sigma^{-1}\omega} h_{\sigma^{-1} \omega} \\
				&	=  \L_{\sigma^{-1}\omega}(h_{\sigma^{-1} \omega,k}-h_{\sigma^{-1} \omega}) +  (\L_{\sigma^{-1}\omega,k}-\L_{\sigma^{-1}\omega})h_{\sigma^{-1}\omega,k},
			\end{split}
		\end{equation}
		for $\omega \in \Omega'$ and $k\in \N$. By iterating~\eqref{i} and noting that $ \L_{\sigma^{-1} \omega} \circ \L_{\sigma^{-2} \omega}\circ \ldots  \circ \L_{\sigma^{-N}\omega} = \L^N_{\sigma^{-N} \omega}$, we have that
		\begin{equation}\label{tel}
			h_{\omega,k}-h_\omega =  \L_{\sigma^{-N}\omega}^{N}(h_{\sigma^{-N} \omega,k}-h_{\sigma^{-N} \omega}) +  \sum_{n=0}^{N-1} \L^n_{\sigma^{-n} \omega} (\L_{\sigma^{-(n+1)}\omega,k}-\L_{\sigma^{-(n+1)}\omega})h_{\sigma^{-(n+1)}\omega,k},
		\end{equation}
		for $\omega \in \Omega'$ and $k, N\in \N$. On the other hand, \eqref{c11} and~\eqref{c33} imply that
		\begin{equation}\label{tel1}
			\|\L_{\sigma^{-N}\omega}^{N}(h_{\sigma^{-N} \omega,k}-h_{\sigma^{-N} \omega})\|_w \le 2C(\sigma^{-N} \omega)^2 e^{-\lambda N} \le 2C_{\lambda/4}(\omega)^2 e^{-\frac{\lambda}{2} N},
		\end{equation}
		for $\mathbb P$-a.e. $\omega \in \Omega$ and  $k, N\in \N$.
		Dividing~\eqref{tel} by $\epsilon_k$, letting $N\to \infty$ and using~\eqref{tel1}, yields  that
		\begin{equation}\label{dec} 
			\frac{1}{\epsilon_k} (h_{\omega,k}-h_\omega) =\sum_{n=0}^\infty \L_{\sigma^{-n} \omega}^{n} \bigg ( \frac{1}{\epsilon_k}(\L_{\sigma^{-(n+1)}\omega,k}-\L_{\sigma^{-(n+1)}\omega})h_{\sigma^{-(n+1)}\omega,k} \bigg ),
		\end{equation}
		for $\mathbb P$-a.e. $\omega \in \Omega$ and $k\in \N$.
		Thus, it follows from~\eqref{eq:linresp} and~\eqref{dec} that
		\[
		\frac{1}{\epsilon_k} (h_{\omega,k}-h_\omega)-\hat h_\omega=(I)+(II),
		\]
		where
		\[
		(I):=\sum_{n=0}^\infty \L_{\sigma^{-n} \omega}^{n} \bigg ( \frac{1}{\epsilon_k}(\L_{\sigma^{-(n+1)}\omega,k }-\L_{\sigma^{-(n+1)}\omega})-\hat\L_{\sigma^{-(n+1)}\omega}\bigg )h_{\sigma^{-(n+1)}\omega}
		\]
		and 
		\[
		(II):=\sum_{n=0}^\infty \L_{\sigma^{-n} \omega}^{n} \bigg ( \frac{1}{\epsilon_k}(\L_{\sigma^{-(n+1)}\omega,k }-\L_{\sigma^{-(n+1)}\omega} )(h_{\sigma^{-(n+1)}\omega,k}-h_{\sigma^{-(n+1)}\omega}) \bigg ) .
		\]
		Now, applying Theorem \ref{SS} for  the pair of  spaces $(\mathcal B_{ss}, \mathcal B_s)$ and using~\eqref{c33} and~\eqref{c44}, we observe that
		\[
		\begin{split}
			& \bigg{\|}  \L_{\sigma^{-n} \omega}^{n} \bigg ( \frac{1}{\epsilon_k}(\L_{\sigma^{-(n+1)}\omega,k }-\L_{\sigma^{-(n+1)}\omega} )(h_{\sigma^{-(n+1)}\omega,k}-h_{\sigma^{-(n+1)}\omega}) \bigg )  \bigg{\|}_w \\
			&\le  C(\sigma^{-n}\omega)e^{-\lambda n} \bigg{\|}  \frac{1}{\epsilon_k}(\L_{\sigma^{-(n+1)}\omega,k }-\L_{\sigma^{-(n+1)}\omega} )(h_{\sigma^{-(n+1)}\omega,k}-h_{\sigma^{-(n+1)}\omega}) \bigg{\|}_w \\
			&\le  C(\sigma^{-n}\omega)e^{-\lambda n} C(\sigma^{-(n+1)}\omega)\|h_{\sigma^{-(n+1)}\omega,k}-h_{\sigma^{-(n+1)}\omega}\|_s \\
			&\le C(\sigma^{-n}\omega)e^{-\lambda n} C(\sigma^{-(n+1)}\omega)\tilde C(\sigma^{-(n+1)}\omega)|\epsilon_k|^r,
		\end{split}
		\]
		for $\mathbb P$-a.e. $\omega \in \Omega$, $n\in \N$ and $k\in \N$, where $\tilde C$ and $r>0$ are given by Theorem~\ref{SS}. By applying Proposition~\ref{temp}, we find that 
		\[
		\begin{split}
			& \bigg{\|}  \L_{\sigma^{-n} \omega}^{n} \bigg ( \frac{1}{\epsilon_k}(\L_{\sigma^{-(n+1)}\omega,k }-\L_{\sigma^{-(n+1)}\omega} )(h_{\sigma^{-(n+1)}\omega,k}-h_{\sigma^{-(n+1)}\omega}) \bigg )  \bigg{\|}_w \\
			&\le e^2 C_{\lambda/4}(\omega)^2 \tilde C_{\lambda/4}(\omega)e^{-\frac{\lambda}{4} n}|\epsilon_k|^r,
		\end{split}
		\]
		and therefore we conclude that  there exists $D_1 \in \mathcal K$ such that 
		\begin{equation}\label{EQ1}
			\| (II)\|_w \le D_1(\omega)|\epsilon_k|^r, \quad \text{for $\mathbb P$-a.e. $\omega \in \Omega$ and $k\in \N$.}
		\end{equation}
		
		On the other hand, by~\eqref{c11}, \eqref{c33}, \eqref{c66} and Proposition~\ref{temp} we have that 
		\[
		\begin{split}
			&\bigg \|  \L_{\sigma^{-n} \omega}^{n} \bigg ( \frac{1}{\epsilon_k}(\L_{\sigma^{-(n+1)}\omega,k }-\L_{\sigma^{-(n+1)}\omega})-\hat\L_{\sigma^{-(n+1)}\omega}\bigg )h_{\sigma^{-(n+1)}\omega} \bigg \|_w \\
			&\le C(\sigma^{-n}\omega)e^{-\lambda n}\bigg \|   \bigg ( \frac{1}{\epsilon_k}(\L_{\sigma^{-(n+1)}\omega,k }-\L_{\sigma^{-(n+1)}\omega})-\hat\L_{\sigma^{-(n+1)}\omega}\bigg )h_{\sigma^{-(n+1)}\omega} \bigg \|_w \\
			&\le |\epsilon_k| C(\sigma^{-n}\omega)e^{-\lambda n}C(\sigma^{-(n+1)}\omega) \|h_{\sigma^{-(n+1)}\omega}  \|_{ss} \\
			&\le  |\epsilon_k| C(\sigma^{-n}\omega)e^{-\lambda n}C(\sigma^{-(n+1)}\omega)^2 \\
			&\le e^2 |\epsilon_k| C_{\lambda/4}(\omega)^3 e^{-\frac{\lambda}{4} n},
		\end{split}
		\]
		for $\mathbb P$-a.e. $\omega \in \Omega$ and $n, k\in \N$. Thus, there exists  $D_2\in \mathcal K$ such that 
		\begin{equation}\label{EQ2}
			\| (I)\|_w \le D_2(\omega)|\epsilon_k|, \quad \text{for $\mathbb P$-a.e. $\omega \in \Omega$ and $k\in \N$.}
		\end{equation}
		Finally, we observe that it follows from the estimates~\eqref{EQ1} and~\eqref{EQ2} that~\eqref{eq:diffequivdens} holds with $K=D_1+D_2\in \mathcal K$.
		The proof of the theorem is completed.
	\end{proof}

	\section{Parameterized smooth expanding on average cocycles}\label{PSC}
	In this section, we introduce and study  parameterized smooth expanding on average cocycles.
	More precisely, we establish several auxiliary results  in which we essentially verify that  assumptions of Theorem~\ref{LR} are fulfilled in our setting. 
	
	Throughout this section, $\mathbb S^1$ will denote the unit circle endowed with  the Lebesgue measure $m$. 
	
	We begin by introducing the notion of a parameterized smooth expanding on average cocycle. 
	\begin{definition}\label{EC}
		Let $(\Omega, \mathcal F, \mathbb P)$ be a probability space such that $\Omega$ is  a Borel subset of a separable complete metric space. Furthermore, let $\sigma \colon \Omega \to \Omega$ be a homeomorphism that preserves $\mathbb P$ and that $\mathbb P$ is ergodic. Moreover, take an interval
		$I\subset (-1,1)$ that  contains $0$ and let $r\in \N$, $r\ge 4$. We say that  a measurable map $\mathbf T \colon \Omega \to C^r(I\times \mathbb S^1, \mathbb S^1)$ is a \emph{parameterized smooth expanding on average cocycle} if the following holds:
		\begin{itemize}
			\item there exists a log-integrable random variable $K\colon \Omega \to (0, \infty)$ such that  for $\epsilon \in I$ and $\mathbb P$-a.e. $\omega \in \Omega$, 
			\begin{equation}\label{n1}
				\|T_{\omega, \epsilon}\|_{C^r}\le K(\omega)
			\end{equation}
			and, for $i\in\{1,2\}$,
			\begin{equation}\label{n2}
				\| \partial_\epsilon^i T_{\omega, \epsilon}\|_{C^{r-i}}\le K(\omega),
			\end{equation}
			where $T_{\omega, \epsilon}:=\mathbf T(\omega)(\epsilon, \cdot)\in C^r(\mathbb S^1, \mathbb S^1)$;
			\item for each $\epsilon \in I$, $\omega \mapsto \lambda_{\omega, \epsilon}:=\min |T_{\omega, \epsilon}'|$ and $\omega \mapsto \int_{\mathbb S^1} \frac{|T_{\omega, \epsilon}''|}{(T_{\omega, \epsilon}')^2}\, dm$ are measurable;
			\item there exists a random variable $\underline{\lambda} \colon \Omega \to (0, +\infty)$ such that for any $\epsilon \in I$, $\lambda_{\omega, \epsilon} \ge \underline{\lambda}(\omega)$ for $\mathbb P$-a.e. $\omega \in \Omega$. Moreover, 
			\begin{equation}\label{lambda}
				\int_{\Omega} \log \underline{\lambda}(\omega)\, d\mathbb P(\omega)>0.
			\end{equation}
		\end{itemize}
	\end{definition}
	\begin{remark}
		It follows from~\eqref{lambda} that $\underline{\lambda} \in \mathcal K$ (see Remark~\ref{TR}).
	\end{remark}
	\begin{remark}
		The class of systems introduced in Definition~\ref{EC} admits a natural higher dimensional counterpart: on the $d$-dimensional torus $\mathbb T^d$, endowed with its Haar-Lebesgue measure $m$, one may consider a measurable mapping $\mathbf T:\Omega\to C^r(I\times \mathbb T^d,\mathbb T^d)$ for some $r>4$. Setting $T_{\omega,\epsilon}:=\mathbf{T}(\omega)(\epsilon,\cdot)$, we assume that:
		\begin{itemize}
		 \item \eqref{n1} and \eqref{n2} hold. 
		 \item for each $\epsilon\in I$, $\lambda_{\omega,\epsilon}:=\min_{x\in\mathbb T^d}\min_{\underset{\|v\|=1}{v\in\mathbb R^d}}\|DT_{\omega,\epsilon}(x)\cdot v\|$ is measurable. 
		 \\Furthermore, there exists a random variable  $\underline\lambda:\Omega\to(0,+\infty)$ such that for any $\epsilon \in I$, $\lambda_{\omega, \epsilon}\ge\underline\lambda (\omega)$ for $\mathbb P$-a.e $\omega \in \Omega$. In addition, we suppose that  \eqref{lambda} holds;
		 \item setting $g_{\omega,\epsilon}:=\det(DT_{\omega,\epsilon}(\cdot))$, we assume that $\omega \mapsto \int_{\mathbb T^d}\|Dg_{\omega,\epsilon}(x)\|dm(x)$ is $\log$-integrable (with respect to $\mathbb P$), for each $\epsilon \in I$.
		\end{itemize}
	All the results of the present section apply also in this setting, modulo some obvious changes.
	\end{remark}
	\begin{example}\label{EXAMP}
		Let $(\Omega, \mathcal F, \mathbb P)$, $\sigma \colon \Omega \to \Omega$ and $r\in \N$ be as in Definition~\ref{EC}.
		Choose a $\log$-integrable  random variable $\beta \colon \Omega \to \N$ such that  $\mathbb P(\{\beta >1\})>0$. Hence, 
		\[
		\int_\Omega \log \beta (\omega)\, d\mathbb P(\omega)>0.
		\]
		For $\omega \in \Omega$, we define $T_\omega \colon \mathbb S^1 \to \mathbb S^1$ by
		\[
		T_\omega (x)=\beta (\omega)x \,  (mod \ 1), \quad x\in \mathbb S^1.
		\]
		Furthermore, let $D\colon \Omega \to C^r(\mathbb S^1, \mathbb S^1)$ be a measurable map such that $\omega \mapsto \|D_\omega\|_{C^r}$ is  $\log$-integrable  and $D_\omega'(x)\ge 0$ for $x\in \mathbb S^1$ and $\omega \in \Omega$. We define $\mathbf T \colon \Omega \to C^r((-1, 1)\times \mathbb S^1, \mathbb S^1)$ by
		\[
		\mathbf T(\omega)(\epsilon, \cdot)=(\Id+\epsilon^2D_\omega) \circ T_\omega.
		\]
		It is straightforward to verify that $\mathbf T$ is a parameterized smooth expanding on average cocycle. In particular, using the same notation as in Definition~\ref{EC}, we have $\underline{\lambda}=\beta$.
	\end{example}
	\begin{example}
		Consider, for $m\in\mathbb N$ and $r>4$, a family $\{T_0,\dots,T_m \}$ of local $C^r$ diffeomorphisms of $\mathbb S^1$, and a family  $\{d_0,\dots,d_m\}$ of  $C^r$ mappings $d_i:\mathbb S^1\to\mathbb R$. Setting $\lambda_i:=\min_{x\in\mathbb S^1}|T'_i(x)|>0$, we assume that $\sum_{i=0}^{m-1}\log\lambda_i>0$.
		\\Consider now the full-shift on the alphabet $\{0,\dots,m\}$, i.e. set $\Omega:=\{0,\dots,m\}^{\mathbb Z}$, and for $\omega=(\omega_n)_{n\in \mathbb Z}$, $(\sigma\omega)_n=\omega_{n+1}$, endowed with  a Markov measure $\mathbb P$ associated to a probability vector $(p_0,\dots,p_m)$.
		Let $\epsilon_0>0$ be small enough, and set $I=(-\epsilon_0,\epsilon_0)$. We then define a measurable mapping $\mathbf T:\Omega\to C^r(I\times\mathbb S^1,\mathbb S^1)$ by
		\begin{equation*}
			\mathbf T(\omega)(\epsilon,\cdot):= T_i+\epsilon d_i~\text{if}~\omega_0=i.
		\end{equation*}
	This data defines a cocycle over $\sigma$, which is simply the i.i.d composition of maps $T_{\omega,\epsilon}=\mathbf T(\omega)(\epsilon,\cdot)$. It is easy to see that it satisfies the requirements of Definition \ref{EC}: in particular, \eqref{n1}, \eqref{n2} are satisfied with constant $K$, and the expansion on average condition is satisfied with $\underline\lambda(\omega):=\lambda_i-\epsilon_0\min |d'_i|~\text{if}~\omega_0=i$; \eqref{lambda} holds up to shrinking $\epsilon_0$.
	\end{example}	
	
	The following auxiliary result shows that parameterized smooth expanding on average cocycles exhibit the so-called random covering condition (of uniform type with respect to the parameter $\epsilon$). For the proof, we refer to~\cite[Example 6]{DHS}.
	\begin{proposition}\label{covering}
		Let $\mathbf T\colon \Omega \to C^r(I\times \mathbb S^1, \mathbb S^1)$ be a parameterized  smooth expanding on average cocycle. Then, for any interval $J\subset \mathbb S^1$ and $\mathbb P$-a.e. $\omega \in \Omega$, there exists $n_c=n_c(\omega, J)\in \N$ such that 
		\[
		T_{\omega, \epsilon}^n (J)=\mathbb S^1, \quad \text{for $n\ge n_c$ and $\epsilon \in I$.}
		\]
	\end{proposition}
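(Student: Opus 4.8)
The plan is to deduce the uniform (in $\epsilon$) random covering property from the expanding-on-average hypothesis by a standard "grow-then-fill" argument, being careful to extract constants that do not depend on $\epsilon$. First I would fix an interval $J\subset\mathbb S^1$ and consider its length $|T_{\omega,\epsilon}^n(J)|$ as long as the iterates remain intervals (i.e.\ before the image wraps around the circle). Because $T_{\omega,\epsilon}'\geq\lambda_{\omega,\epsilon}\geq\underline\lambda(\omega)$ and the $T_{\sigma^j\omega,\epsilon}$ have $C^r$-norms bounded by $K(\sigma^j\omega)$, the standard bounded-distortion estimate for $C^2$ expanding maps gives that as long as $T_{\omega,\epsilon}^{n}(J)$ is an interval, its length satisfies a lower bound of the form $|T_{\omega,\epsilon}^n(J)|\geq D(\omega)^{-1}\Big(\prod_{j=0}^{n-1}\underline\lambda(\sigma^j\omega)\Big)|J|$ for some tempered $D(\omega)$ coming from summing the distortion contributions $\sum_j |T_{\sigma^j\omega,\epsilon}''|_\infty/(\text{expansion so far})$, which converge because the expansion is geometric and the numerators are log-integrable hence tempered. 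Crucially all of these bounds are uniform in $\epsilon\in I$ since $K$ and $\underline\lambda$ are.

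Next, by \eqref{lambda} and Birkhoff's ergodic theorem, $\frac1n\sum_{j=0}^{n-1}\log\underline\lambda(\sigma^j\omega)\to\int_\Omega\log\underline\lambda\,d\mathbb P=:\ell>0$ for $\mathbb P$-a.e.\ $\omega$, so $\prod_{j=0}^{n-1}\underline\lambda(\sigma^j\omega)$ grows at least like $e^{n\ell/2}$ eventually; combined with the distortion bound, this forces the length of the interval $T_{\omega,\epsilon}^n(J)$ to exceed, say, $1/2$ after finitely many steps $n_1=n_1(\omega,J)$ (again uniformly in $\epsilon$, since the threshold $n_1$ can be chosen from the $\epsilon$-free quantities $D(\omega)$, $|J|$ and the Birkhoff averages of $\underline\lambda$). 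Once the image interval has length $\geq 1/2$, one more application of the expansion-on-average growth (the image of an interval of length $\geq 1/2$ under one more step of expansion by at least $\underline\lambda\geq 1$ on average, iterated, must wrap around and cover $\mathbb S^1$) shows that there is $n_c=n_c(\omega,J)$ such that $T_{\omega,\epsilon}^{n}(J)=\mathbb S^1$ for all $n\geq n_c$ and all $\epsilon\in I$. Monotonicity in $n$ (once $T_{\omega,\epsilon}^{n}(J)=\mathbb S^1$ it stays so, since $T_{\sigma^n\omega,\epsilon}$ is surjective) gives the "for all $n\geq n_c$" form of the statement.

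The main obstacle is the bounded-distortion step in the nonuniform (in $\omega$) setting: one must verify that the accumulated distortion along the orbit stays bounded by a finite, measurable — ideally tempered — function of $\omega$, uniformly over $\epsilon\in I$. This requires that the series $\sum_{j\geq0}\big(\esssup|T_{\sigma^j\omega,\epsilon}''|\big)\big/\big(\prod_{i<j}\underline\lambda(\sigma^i\omega)\big)$ converge $\mathbb P$-a.s., which follows from temperedness of $K$ (controlling the numerators via \eqref{n1}) against the geometric growth of the denominators from \eqref{lambda}, but the bookkeeping must be done with some care; the rest is routine. (As noted in the statement, the detailed argument is carried out in~\cite[Example 6]{DHS}, so in the paper this can simply be cited; the above is the shape of that argument.)
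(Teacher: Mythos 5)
The paper offers no proof of its own here --- it defers entirely to \cite[Example 6]{DHS} --- so the comparison is with the standard argument. Your grow-then-fill strategy is the right one and reaches the correct conclusion, but two remarks. First, the bounded-distortion step, which you single out as the main obstacle, is superfluous: since $T_{\omega,\epsilon}'$ is continuous and bounded away from $0$ it has constant sign, so $T_{\omega,\epsilon}$ is monotone on any interval $J$ and the image is an arc of length exactly $\min\big(1,\int_J|T_{\omega,\epsilon}'|\,dm\big)\ge \min\big(1,\underline{\lambda}(\omega)|J|\big)$; iterating, as long as $T_{\omega,\epsilon}^n(J)\ne \mathbb S^1$ one gets $|T_{\omega,\epsilon}^n(J)|\ge \big(\prod_{j=0}^{n-1}\underline{\lambda}(\sigma^j\omega)\big)|J|$ with no distortion correction whatsoever, and Birkhoff's theorem makes the right-hand side exceed $1$ at some $\epsilon$-independent time $n_c(\omega,J)$, forcing covering before that time. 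Tracking the distortion sum $\sum_j |T_{\sigma^j\omega,\epsilon}''|_\infty/\prod_{i<j}\underline{\lambda}(\sigma^i\omega)$ would work but buys you nothing, since you are bounding the length of the image of the \emph{whole} interval, not comparing subintervals. Second, your closing step ("once the length is $\ge 1/2$, one more application of the expansion ... must wrap around") is stated loosely: the definition only requires $\int_\Omega\log\underline{\lambda}\,d\mathbb P>0$, so $\underline{\lambda}(\sigma^{n_1}\omega)$ may well be $<1$ and a single further step need not cover; one must continue iterating with the same Birkhoff estimate along the shifted orbit, at which point the intermediate threshold $1/2$ plays no role and the two-stage structure collapses into the single stopping rule $\prod_{j<n}\underline{\lambda}(\sigma^j\omega)\,|J|>1$. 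Your observations that $n_c$ is $\epsilon$-uniform (because $\underline{\lambda}$ is) and that covering persists for all $n\ge n_c$ by surjectivity of each $T_{\sigma^n\omega,\epsilon}$ are correct and are exactly the points the statement needs.
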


	The following is the main result of this section: a linear response result for parameterized smooth expanding on average cocycles.
	\begin{theorem}\label{LR1}
		Let $\mathbf T\colon \Omega \to C^r(I\times \mathbb S^1, \mathbb S^1)$ be an arbitrary parameterized smooth expanding on average cocycle. For $\epsilon \in I$, let $(h_{\omega, \epsilon})_{\omega \in \Omega}$ be the family given by Lemma~\ref{1158}. 
		Then, there exists a measurable family $(\hat h_\omega)_{\omega \in \Omega} \subset W^{1,1}$, $r>0$ and $K\in \mathcal K$ such that
		\[
		\left\| \frac{1}{\epsilon_k}(h_{\omega,k}-h_{\omega})-\hat h_\omega\right\|_{W^{1,1}} \le K(\omega)|\epsilon_k|^r,
		\]
		for $\mathbb P$-a.e. $\omega \in \Omega$ and $k\in \N$, where $h_{\omega, k}:=h_{\omega,\epsilon_k}$, $h_\omega:=h_{\omega, 0}$ and $(\epsilon_k)_{k\in \N}$ is a sequence in $I\setminus \{0\}$ such that $\epsilon_k \to 0$. Moreover, $\hat h_\omega$ is given by~\eqref{eq:linresp}, where $\L_\omega$ is the transfer operator associated to $\mathbf T(\omega)(0, \cdot)$ and $\hat \L_\omega$ is given by~\eqref{hatL}.
	\end{theorem}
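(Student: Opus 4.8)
The plan is to obtain Theorem~\ref{LR1} as a direct application of the abstract result Theorem~\ref{LR}, with the concrete choices $\mathcal{B}_w = L^1(\mathbb{S}^1, m)$, $\mathcal{B}_s = W^{1,1}(\mathbb{S}^1)$, $\mathcal{B}_{ss} = W^{2,1}(\mathbb{S}^1)$, the functional $\psi(h) = \int_{\mathbb{S}^1} h\, dm$ (which is fixed by every transfer operator $\mathcal{L}_{\omega,\epsilon}$, so that~\eqref{psi} holds), and $\mathcal{L}_{\omega,\epsilon}$ the transfer operator of $T_{\omega,\epsilon}$ with respect to $m$. Thus the entire task reduces to verifying, in this setting, hypotheses~\eqref{c11}--\eqref{c66} of Theorem~\ref{LR}, which I would organize as follows. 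First, I invoke Lemma~\ref{1158} (and the surrounding lemmas of this section) to produce, for each $\epsilon\in I$ and on a $\sigma$-invariant full-measure set $\Omega'$, the equivariant density $h_{\omega,\epsilon}\in W^{2,1}$ with $\int h_{\omega,\epsilon} = 1$ and a tempered bound $\|h_{\omega,\epsilon}\|_{ss}\le C(\omega)$; this uses the uniform-in-$\epsilon$ Lasota--Yorke inequalities on $W^{1,1}$ and $W^{2,1}$ coming from~\eqref{n1}--\eqref{n2} and the expansion-on-average condition~\eqref{lambda}, together with the MET (Theorem~\ref{MET}) and Lemma~\ref{QC} applied to the transfer operator cocycle on each of the two Sobolev spaces. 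The measurability of $\omega\mapsto h_{\omega,0}$ likewise comes from the measurable dependence built into Definition~\ref{EC} and the measurable selection of the top Oseledets space (cf. Corollary~\ref{KORO}).

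Second, I verify the growth/contraction estimates~\eqref{c22} and~\eqref{c33} on both $\mathcal{B}_w = L^1$ and $\mathcal{B}_s = W^{1,1}$. The bound~\eqref{c22} is immediate on $L^1$ since transfer operators are $L^1$-contractions, and on $W^{1,1}$ it follows from iterating the Lasota--Yorke inequality backwards along the orbit and using temperedness of the resulting random constant (Proposition~\ref{temp} and Remark~\ref{TR}). For~\eqref{c33}, the key point is that $\lambda$ must be the (deterministic, by ergodicity) spectral gap, i.e. the negative of the second Lyapunov exponent of the cocycle $(\mathcal{L}_\omega)$ acting on $W^{1,1}$; here the random covering property from Proposition~\ref{covering} guarantees that the top Oseledets space is one-dimensional, spanned by $h_\omega$, so that $V_w, V_s$ are precisely the codimension-one complements and the decay rate $e^{-\lambda n}$ with tempered constant follows from the MET applied on $\mathcal{B}_s$ and $\mathcal{B}_w$, noting that these Lyapunov spectra coincide on the relevant top part. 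Third, the perturbative bounds~\eqref{c44}, \eqref{c55}, \eqref{c66}: here one writes $\mathcal{L}_{\omega,\epsilon} - \mathcal{L}_\omega$ in terms of the transfer operators and the difference $T_{\omega,\epsilon} - T_{\omega,0}$, and uses the $C^r$-bounds~\eqref{n1} and the two-derivatives-in-$\epsilon$ bounds~\eqref{n2} to Taylor-expand in $\epsilon$; the $C^1$-in-$\epsilon$ term defines $\hat{\mathcal{L}}_\omega$ (formula~\eqref{hatL}), which is a first-order differential operator with coefficients controlled by $K(\omega)$, hence bounded $W^{2,1}\to W^{1,1}$ and, after composing with the projection onto $V_w$, bounded $W^{2,1}\to V_w$; the second-order Taylor remainder, with coefficients bounded by $K(\omega)$ via~\eqref{n2} with $i=2$, gives the $|\epsilon_k|^2$-type error in~\eqref{c66}. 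These are precisely the quenched analogues of the perturbative estimates in~\cite{GG,GS}, now with random constants replaced by elements of $\mathcal{K}$.

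Finally, having checked all hypotheses of Theorem~\ref{LR}, I conclude directly that there exist a measurable $\omega\mapsto\hat h_\omega\in V_w\subset L^1$, $K\in\mathcal{K}$ and $r>0$ with $\bigl\|\tfrac{1}{\epsilon_k}(h_{\omega,k}-h_\omega)-\hat h_\omega\bigr\|_w\le K(\omega)|\epsilon_k|^r$, and that $\hat h_\omega$ is given by the series~\eqref{eq:linresp}. To upgrade the conclusion from $\mathcal{B}_w = L^1$ to $W^{1,1}$ as stated, I would run the same argument a second time with the triple shifted one notch up the Sobolev scale, i.e. $(\mathcal{B}_w,\mathcal{B}_s,\mathcal{B}_{ss}) = (W^{1,1}, W^{2,1}, W^{3,1})$, which is permissible because Definition~\ref{EC} requires $r\ge 4$ (so that one still has two $\epsilon$-derivatives of $T_{\omega,\epsilon}$ in $C^{r-2}\subset C^2$ and the Lasota--Yorke machinery closes on $W^{3,1}$); then $\hat h_\omega\in W^{1,1}$ and the error is measured in $\|\cdot\|_{W^{1,1}}$. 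The main obstacle, and the place requiring the most care, is the verification of~\eqref{c33} with an $\omega$-independent $\lambda$: one must argue that the second Lyapunov exponent of the transfer cocycle is strictly negative and that it is the same whether computed on $W^{1,1}$ or $L^1$ on the codimension-one space $V$, which is exactly where the random covering property of Proposition~\ref{covering} and the one-dimensionality of the top Oseledets space (Corollary~\ref{KORO}) are essential; everything else is a (lengthy but routine) bookkeeping of Lasota--Yorke and Taylor estimates with tempered random constants.
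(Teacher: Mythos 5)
Your proposal is correct and follows essentially the same route as the paper: Theorem~\ref{LR1} is obtained by applying Theorem~\ref{LR} to the triple $(\mathcal B_w,\mathcal B_s,\mathcal B_{ss})=(W^{1,1},W^{2,1},W^{3,1})$ with $\psi(h)=\int_{\mathbb S^1}h\,dm$, verifying~\eqref{c11} via Corollary~\ref{KORO}, \eqref{c22} via Proposition~\ref{1655}, \eqref{c33} via Proposition~\ref{prop} at $\epsilon=0$, \eqref{c44}--\eqref{c55} via Corollary~\ref{KK}, and~\eqref{c66} via~\eqref{1700}. Your preliminary pass with $\mathcal B_w=L^1$ is not only redundant but would actually fail at~\eqref{c33} (the cocycle has no quasi-compactness or exponential decay measured in the $L^1$ norm alone), so it is essential that, as you do in the end, the weak space is taken to be $W^{1,1}$ from the start.
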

	The proof of Theorem~\ref{LR1} is given in \S.\ref{sec:LRproof}. 
	\subsection{Lasota-Yorke inequalities}
	
	By $W^{\ell, 1}:=W^{\ell, 1}(\mathbb S^1)$ we will denote the Sobolev space consisting of all $f\in L^1(\mathbb S^1)$ with the property that its derivatives $f^{(j)}$ exist in the weak sense for $j\le \ell$ and $\| f^{(j)}\|_{L^1}<+\infty$. Then, $W^{\ell, 1}(\mathbb S^1)$ is a Banach space with respect to the norm
	\[
	\|f\|_{W^{\ell, 1}}:=\sum_{0\le j\le \ell} \|f^{(j)}\|_{L^1}.
	\]
	We start by recalling the following well-known lemma (see for example~\cite[Lemma 5.9]{HC}).
	\begin{lemma}\label{lemma:Crim}
		Let $T\in C^r(\mathbb S^1, \mathbb S^1)$ and let $\ell\in\N$ be such that $\ell+1\le r$. Then, for any $f\in W^{\ell, 1}$ we have that 
		\[
		\L_T(f)^{(\ell)}=\L_T\left(\frac{1}{(T')^{2\ell}}\sum_{j=0}^\ell G_{\ell,j}(T',\dots,T^{(\ell+1)})f^{(j)}\right),
		\]
		where $G_{\ell,j}(\cdot,\dots,\cdot):\R^{\ell+1} \to\R$ are polynomials, $\L_T$ denotes the transfer operator associated to $T$  and $\L_T(f)^{(\ell)}$ is the $\ell$-derivative of $\L_T(f)$.
	\end{lemma}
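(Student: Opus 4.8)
\textbf{Proof proposal for Lemma~\ref{lemma:Crim}.}

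The plan is to prove the formula by induction on $\ell$, starting from the defining relation for the transfer operator and differentiating repeatedly, keeping careful track of the combinatorial structure of the resulting terms. Recall that for $T\in C^r(\mathbb S^1,\mathbb S^1)$ and $f\in L^1(\mathbb S^1)$, the transfer operator is given pointwise (on each branch) by
\[
\L_T(f)(x)=\sum_{y\in T^{-1}(x)}\frac{f(y)}{|T'(y)|},
\]
so it suffices, by linearity and a partition-of-unity argument on the inverse branches, to work locally with a single $C^r$ inverse branch $S=(T|_U)^{-1}$, for which $\L_T(f)(x)=f(S(x))|S'(x)|$ on the relevant piece. Up to the sign of $T'$ (which is locally constant), one has $|S'(x)|=\pm\frac{1}{T'(S(x))}$, and differentiating the product $f(S(x))\cdot(T'(S(x)))^{-1}$ is a routine application of the Leibniz rule and the chain rule (via Fa\`a di Bruno). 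The key observation is that every derivative $\frac{d^m}{dx^m}$ acting through $S$ produces a factor $S'=1/(T'\circ S)$, and differentiating $1/(T'\circ S)$ or any $T^{(i)}\circ S$ again produces further such factors together with higher derivatives $T^{(j)}\circ S$ with $j\le \ell+1$. Collecting powers of $S'$, one sees that after $\ell$ differentiations the worst power of $S'$ that appears is $(S')^{2\ell}$ — one factor of $S'$ from each of the $\ell$ chain-rule applications, and one more factor of $S'$ from each of the $\ell$ derivatives hitting a denominator $1/(T'\circ S)$ — which is exactly the weight $1/(T')^{2\ell}$ pulled through $\L_T$ in the statement.

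Concretely, I would set up the induction as follows. For $\ell=1$: differentiating $\L_T(f)(x)=f(S(x))(T'(S(x)))^{-1}$ (local branch) gives
\[
\L_T(f)'(x)=f'(S(x))S'(x)(T'(S(x)))^{-1}-f(S(x))(T'(S(x)))^{-2}T''(S(x))S'(x),
\]
and using $S'=1/(T'\circ S)$ this is $\L_T\big(\frac{1}{(T')^2}(T' f' - T'' f)\big)(x)$ after reassembling the branches — giving $G_{1,1}=T'$ and $G_{1,0}=-T''$. For the inductive step, assume
\[
\L_T(f)^{(\ell)}=\L_T\!\left(\frac{1}{(T')^{2\ell}}\sum_{j=0}^{\ell}G_{\ell,j}(T',\dots,T^{(\ell+1)})f^{(j)}\right),
\]
write $g:=\frac{1}{(T')^{2\ell}}\sum_{j=0}^\ell G_{\ell,j}f^{(j)}$, and apply the case $\ell=1$ to get $\L_T(f)^{(\ell+1)}=\L_T\!\big(\frac{1}{(T')^2}(T' g' - T'' g)\big)$. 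Expanding $g'$ via the Leibniz rule — hitting the prefactor $(T')^{-2\ell}$, the polynomial coefficients $G_{\ell,j}$ (whose derivatives are again polynomials in $T',\dots,T^{(\ell+2)}$), and the $f^{(j)}$ (producing $f^{(j+1)}$ with $j+1\le \ell+1$) — and multiplying through by $T'$ and by $(T')^{-2}$, one checks that the common denominator becomes $(T')^{2\ell+2}=(T')^{2(\ell+1)}$ and that the numerator is a finite sum $\sum_{j=0}^{\ell+1}G_{\ell+1,j}f^{(j)}$ with $G_{\ell+1,j}$ polynomials in $T',\dots,T^{(\ell+2)}$. The condition $\ell+1\le r$ (so $\ell+2\le r+1$, i.e.\ only derivatives of $T$ of order $\le r$ are genuinely needed at step $\ell$ where $\ell+1\le r$) guarantees all these derivatives exist; and $f\in W^{\ell,1}$ guarantees $f',\dots,f^{(\ell)}$ exist weakly, so all manipulations are justified in the weak sense, with the transfer operator bounded on $L^1$ ensuring the identity holds as an equality of $L^1$ functions.

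The main obstacle — more bookkeeping than conceptual difficulty — is verifying precisely that the power of $(T')$ in the denominator stabilizes at exactly $2\ell$ rather than growing faster, and that no spurious factors of $1/(T')$ are lost or gained when reassembling the local-branch computation into the genuine transfer operator $\L_T$. The cleanest way to handle the reassembly is to note that the identity to be proved is \emph{linear} in $f$ and \emph{local in $x$ over inverse branches}: the operator $\L_T$ on the right-hand side is applied to a function built algebraically from $f$ and the derivatives of $T$, so it suffices to verify the pointwise identity $(\L_T f)^{(\ell)}(x)=\sum_{y\in T^{-1}x}\frac{1}{|T'(y)|}\cdot\frac{1}{T'(y)^{2\ell}}\sum_{j}G_{\ell,j}(T'(y),\dots)f^{(j)}(y)$ for each fixed $x$, which reduces to the single-branch computation above summed over $y$. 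One should also record that $G_{\ell,j}$ does not depend on $T$ or $f$, only on the indices, as claimed; this is immediate from the inductive construction since at each step the new coefficients are obtained from the old ones by a fixed differentiation-and-multiplication procedure. I would present the $\ell=1$ base case in full and then indicate the inductive step schematically, as the polynomial identities involved are entirely mechanical.
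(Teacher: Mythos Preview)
Your proposal is correct and follows the natural inductive route. Note that the paper does not actually give its own proof of this lemma: it is stated as ``well-known'' with a reference to \cite[Lemma 5.9]{HC}, and the subsequent Remark~\ref{rem:primedifferentiation} merely records the recursive relations satisfied by the polynomials $G_{\ell,j}$. Your induction --- reducing the $(\ell+1)$-st derivative to the $\ell=1$ case applied to $g=(T')^{-2\ell}\sum_j G_{\ell,j}f^{(j)}$ --- is precisely the computation that generates those recursive relations, so your approach and the one implicit in the paper's remark are the same. Your base case computation $G_{1,1}=x_1$, $G_{1,0}=-x_2$ and the mechanism for the denominator going from $(T')^{2\ell}$ to $(T')^{2(\ell+1)}$ are correct; the regularity bookkeeping (that $g\in W^{1,1}$ when $f\in W^{\ell+1,1}$ and $\ell+2\le r$, so the $\ell=1$ step applies) is exactly what is needed.
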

	
	\begin{remark} \label{rem:primedifferentiation}
		
		We note that polynomials  $G_{\ell, j}$ can be constructed explicitly via simple recursive relations. Indeed, setting $G_{0,0}:=1$, we have that
		\begin{align*}
			&G_{\ell+1,0}=(1-2\ell)x_2G_{\ell,0}+x_1G_{\ell,0}'\\
			&G_{\ell+1,k}=(1-2\ell)x_2 G_{\ell,k}+x_1(G_{\ell, k}'+G_{\ell, k-1}),\quad\text{for}~k\in\{1,\dots,\ell\}\\
			&G_{\ell+1,\ell+1}=x_1^{\ell+1}.
		\end{align*}
		Here, we define $P'$ of an arbitrary polynomial $P$ by setting $(1)'=0$, $(x_j)'=x_{j+1}$ and by requiring that $(P+Q)'=P'+Q'$ and $(PQ)'=P'Q+PQ'$ for arbitrary polynomials $P$ and $Q$. For example, $(-x_2^2+x_1x_3)'=-2x_2x_3+x_1x_4+x_2x_3=-x_2x_3+x_1x_4$.
		
		It is easy to verify that 
		\begin{equation}\label{der}
			(G_{\ell,j}(T',\dots,T^{(\ell +1)}))'=G_{\ell,j}'(T',\dots, T^{(\ell+2)}),
		\end{equation}
		where $(G_{\ell,j}(T',\dots,T^{(\ell +1)}))'$ denotes the usual derivative of the map $G_{\ell,j}(T',\dots,T^{(\ell +1)})$.
	\end{remark}
	
	
	\begin{lemma}\label{lemma:weakLY}
		Let $\mathbf T \colon \Omega \to C^r(I\times \mathbb S^1, \mathbb S^1)$ be a parameterized smooth expanding on average cocycle. 
		For any $\ell \in \N$ such that $\ell+1\le r$, there exist log-integrable random variables $B_\ell, C_\ell \colon \Omega \to (0, \infty)$ such that, for any $\epsilon \in I$, $\mathbb P$-a.e $\omega\in\Omega$\footnote{Here and throughout the  paper this means that for each $\epsilon \in I$, there exists a full measure set $\Omega_\epsilon \subset \Omega$} and $f\in W^{\ell,1}$,
		\begin{equation}\label{eq:LY}
			\|\L_{\omega, \epsilon}f\|_{W^{\ell,1}}\le C_\ell(\omega)\|f\|_{W^{\ell,1}}
		\end{equation} 
		and
		\begin{equation}\label{eq:LYS}
			\|\L_{\omega, \epsilon }f\|_{W^{\ell,1}}\le \frac{1}{\underline{\lambda}(\omega)^\ell}\|f\|_{W^{\ell,1}}+B_\ell(\omega)\|f\|_{W^{\ell-1,1}},
		\end{equation}
		where $\L_{\omega, \epsilon}$ denotes the transfer operator of $T_{\omega, \epsilon}:=\mathbf T(\omega)(\epsilon, \cdot)$, $\underline{\lambda}(\omega)$ is as in Definition~\ref{EC} and $\| \cdot \|_{W^{0,1}}:=\| \cdot \|_{L^1}$.
	\end{lemma}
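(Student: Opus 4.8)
The plan is to prove both Lasota--Yorke inequalities by expressing the $\ell$-th weak derivative of $\L_{\omega,\epsilon}f$ via the explicit formula of Lemma~\ref{lemma:Crim} and then estimating the $L^1$-norms of the resulting terms. First I would fix $\ell$ with $\ell+1\le r$ and apply Lemma~\ref{lemma:Crim} with $T=T_{\omega,\epsilon}$ to get
\[
\L_{\omega,\epsilon}(f)^{(\ell)}=\L_{\omega,\epsilon}\!\left(\frac{1}{(T_{\omega,\epsilon}')^{2\ell}}\sum_{j=0}^\ell G_{\ell,j}(T_{\omega,\epsilon}',\dots,T_{\omega,\epsilon}^{(\ell+1)})f^{(j)}\right).
\]
Since $\L_{\omega,\epsilon}$ is a (weak) contraction on $L^1$ (it is the transfer operator, hence $\|\L_{\omega,\epsilon}g\|_{L^1}\le\|g\|_{L^1}$), this immediately bounds $\|\L_{\omega,\epsilon}(f)^{(\ell)}\|_{L^1}$ by $\sum_{j=0}^\ell\big\|(T_{\omega,\epsilon}')^{-2\ell}G_{\ell,j}(\dots)\big\|_{C^0}\|f^{(j)}\|_{L^1}$. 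Using the bound $|T_{\omega,\epsilon}'|\ge\underline\lambda(\omega)$ from Definition~\ref{EC}, together with $\|T_{\omega,\epsilon}\|_{C^r}\le K(\omega)$ from~\eqref{n1}, the $C^0$-norm of each coefficient $(T_{\omega,\epsilon}')^{-2\ell}G_{\ell,j}(T_{\omega,\epsilon}',\dots,T_{\omega,\epsilon}^{(\ell+1)})$ is dominated by a polynomial expression in $K(\omega)$ and $\underline\lambda(\omega)^{-1}$; call such a bound $P_{\ell,j}(\omega)$, which is log-integrable since $K$ is log-integrable and $\log\underline\lambda\in L^1$ (indeed $\int\log\underline\lambda>0$). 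Summing over $0\le k\le\ell$ (applying the same argument at each lower order, noting derivatives of lower order only involve coefficients with the same structure) yields~\eqref{eq:LY} with $C_\ell(\omega)$ a finite sum/product of such log-integrable quantities, hence log-integrable by Remark~\ref{TR}.

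For the stronger inequality~\eqref{eq:LYS}, the key point is to isolate the top-order term $j=\ell$: by the recursion in Remark~\ref{rem:primedifferentiation} we have $G_{\ell,\ell}=x_1^\ell$, so the $j=\ell$ coefficient is exactly $(T_{\omega,\epsilon}')^{\ell}/(T_{\omega,\epsilon}')^{2\ell}=(T_{\omega,\epsilon}')^{-\ell}$, whose $C^0$-norm is at most $\underline\lambda(\omega)^{-\ell}$. For this single term I would \emph{not} simply pull out the sup of the coefficient against $\|f^{(\ell)}\|_{L^1}$ — that is fine here since $\|(T_{\omega,\epsilon}')^{-\ell}\|_{C^0}\le\underline\lambda(\omega)^{-\ell}$ already, and $\|\L_{\omega,\epsilon}((T_{\omega,\epsilon}')^{-\ell}f^{(\ell)})\|_{L^1}\le\underline\lambda(\omega)^{-\ell}\|f^{(\ell)}\|_{L^1}\le\underline\lambda(\omega)^{-\ell}\|f\|_{W^{\ell,1}}$. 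All remaining terms have $j\le\ell-1$, so their contribution is bounded by $\big(\sum_{j=0}^{\ell-1}P_{\ell,j}(\omega)\big)\|f\|_{W^{\ell-1,1}}$, and one sets $B_\ell(\omega)$ to be this sum (plus, if one wishes a uniform statement across all orders, the analogous lower-order contributions, all log-integrable). This gives~\eqref{eq:LYS}.

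The main obstacle — really the only subtlety — is bookkeeping the dependence of all constants on $\omega$ through $K(\omega)$ and $\underline\lambda(\omega)$ and checking carefully that every coefficient $C^0$-bound is a product of positive powers of $K(\omega)$ and of $\underline\lambda(\omega)^{-1}$, so that its logarithm is a linear combination of $\log K$ and $\log\underline\lambda$ and hence integrable (and that the uniformity in $\epsilon\in I$ is genuine, which follows because both~\eqref{n1} and the expansion bound $\lambda_{\omega,\epsilon}\ge\underline\lambda(\omega)$ are uniform in $\epsilon$). A minor technical point is that Lemma~\ref{lemma:Crim} is stated for $f\in W^{\ell,1}$ and one should note the formula extends the action of $\L_{\omega,\epsilon}$ to $W^{\ell,1}$ consistently; this is routine since $T_{\omega,\epsilon}\in C^r$ with $\ell+1\le r$ guarantees the polynomial coefficients are $C^1$, so the right-hand side lies in $W^{0,1}=L^1$ and the identity holds in the weak sense by a density argument (approximating $f$ by $C^\infty$ functions). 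I would also remark that the ``$\mathbb P$-a.e.\ $\omega$, for each $\epsilon$'' quantifier ordering (as flagged in the footnote) is exactly what the hypotheses of Definition~\ref{EC} provide, so no measurability issue arises at this stage.
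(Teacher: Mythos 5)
Your proposal is correct and follows essentially the same route as the paper: apply Lemma~\ref{lemma:Crim}, use the $L^1$-contraction of the transfer operator, bound the coefficients $(T_{\omega,\epsilon}')^{-2\ell}G_{\ell,j}(\dots)$ by log-integrable polynomials in $K(\omega)$ and $\underline{\lambda}(\omega)^{-1}$, and for the strong inequality isolate the $j=\ell$ term whose coefficient is exactly $(T_{\omega,\epsilon}')^{-\ell}\le\underline{\lambda}(\omega)^{-\ell}$, absorbing the lower-order derivatives of $\L_{\omega,\epsilon}f$ into $B_\ell$ via the weak inequality at order $\ell-1$ (which the paper does by adding $C_{\ell-1}$ to $B_\ell$; your parenthetical remark about ``lower-order contributions'' is exactly this and is not optional, since \eqref{eq:LYS} bounds the full $W^{\ell,1}$-norm).
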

	
	\begin{proof}
		Starting from Lemma \ref{lemma:Crim},  applying~\eqref{n1} and using the same notation as in Definition~\ref{EC},  we have that 
		\begin{align*}
			\int_{\mathbb S^1} \left|(\L_{\omega, \epsilon}f)^{(\ell)}\right|\,dm&\le \int_{\mathbb S^1} \left|\frac{1}{(T_{\omega,\epsilon}')^{2\ell}}\sum_{j=0}^\ell G_{\ell,j}(T_{\omega, \epsilon}', \ldots, T_{\omega, \epsilon}^{(\ell+1)}) f^{(j)}\right|\, dm\\
			&\le \lambda_{\omega, \epsilon}^{-2\ell}\max_{0\le j\le \ell}|G_{\ell,j}(T_{\omega, \epsilon}', \ldots, T_{\omega, \epsilon}^{(\ell+1)})|_{\infty}\sum_{j=0}^\ell\int_{\mathbb S^1}|f^{(j)}|\,dm\\
			&\le \underline{\lambda} (\omega)^{-2\ell}\max_{0\le j\le\ell} \tilde G_{\ell, j} (K(\omega),\dots,K(\omega)) \cdot \|f\|_{W^{\ell,1}},
		\end{align*}
		for any $\epsilon \in I$, $\mathbb P$-a.e. $\omega \in \Omega$ and $f\in W^{\ell, 1}$, for some polynomials  $\tilde G_{\ell, j}$ satisfying $\tilde G_{\ell, j} (x_1, \ldots, x_{\ell+1})\ge 0$ for $x_i\ge 0$, $1\le i \le \ell+1$.
		We conclude that~\eqref{eq:LY} holds with
		\[
		C_{\ell}(\omega):=\sum_{i=0}^\ell \underline{\lambda} (\omega)^{-2i}\max_{0\le j\le i} \tilde G_{i, j} (K(\omega),\dots,K(\omega)).
		\]
		Remarking that $\omega \mapsto \max_{0\le j\le i} \tilde G_{i,j}(K(\omega),\dots,K(\omega))$ is log-integrable and recalling that the same holds for $\underline{\lambda}$ , we have that $C_{\ell}$ is log-integrable. 
		
		Furthermore, we have  (recall that $G_{\ell, \ell}=x_1^{\ell}$) that 
		\begin{align*}
			&\int_{\mathbb S^1}\left|(\L_{\omega, \epsilon}f)^{(\ell)}\right|\,dm \le \int_{\mathbb S^1} \left|\frac{1}{(T_{\omega,\epsilon}')^{2\ell}}\sum_{j=0}^\ell G_{\ell,j}(T_{\omega, \epsilon}', \ldots, T_{\omega, \epsilon}^{(\ell+1)})f^{(j)}\right|\, dm\\
			&\le \frac{1}{\lambda_{\omega, \epsilon}^\ell}\int_{\mathbb S^1}|f^{(\ell)}|\,dm+\sum_{j=0}^{\ell-1}\int_{\mathbb S^1} \left|\frac{1}{(T_{\omega,\epsilon}')^{2\ell}} G_{\ell,j}(T_{\omega, \epsilon}', \ldots, T_{\omega, \epsilon}^{(\ell+1)})  f^{(j)}\right|\,dm\\
			&\le \underline{\lambda}(\omega)^{-\ell} \int_{\mathbb S^1}|f^{(\ell)}|\, dm+ \underline{\lambda}(\omega)^{-2\ell}\max_{0\le j\le\ell-1}\tilde G_{\ell,j}(K(\omega),\dots,K(\omega))\sum_{j=0}^{\ell-1}\int_{\mathbb S^1}|f^{(j)}|\,dm,
		\end{align*}
		for  $\epsilon \in I$, $\mathbb P$-a.e. $\omega \in \Omega$ and $f\in W^{\ell, 1}$, which implies that~\eqref{eq:LYS} holds with 
		\[
		B_\ell(\omega):=\underline{\lambda} (\omega)^{-2\ell}\max_{0\le j\le\ell-1} \tilde G_{\ell, j} (K(\omega),\dots,K(\omega))+C_{\ell-1}(\omega).
		\]
		The proof of the lemma is completed. 
	\end{proof}

	\subsection{Lyapunov exponents and Oseledets splittings}
	Throughout this subsection, we fix an arbitrary parameterized smooth expanding on average cocycle $\mathbf T\colon \Omega \to C^r(I\times \mathbb S^1, \mathbb S^1)$. We continue to denote by $\L_{\omega, \epsilon}$ the transfer operator associated to $T_{\omega, \epsilon}:=\mathbf T(\omega)(\epsilon, \cdot)$.

	
	\begin{proposition}\label{1125}
		For each $\epsilon \in I$, the cocycle $(\L_{\omega, \epsilon})_{\omega \in \Omega}$ is strongly measurable and quasi-compact on $W^{\ell, 1}$ for $1\le \ell \le r-1$.
	\end{proposition}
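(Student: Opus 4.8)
The plan is to verify the hypotheses of Lemma \ref{QC} for the cocycle $\mathcal{R} = (\Omega, \mathcal F, \mathbb P, \sigma, W^{\ell,1}, \L_\epsilon)$, with the weaker space $\mathcal B' = W^{\ell-1,1}$ (for $\ell=1$ take $\mathcal B' = L^1$, which works since the transfer operator is a contraction on $L^1$). First I would settle strong measurability: the space $W^{\ell,1}(\mathbb S^1)$ is separable, $\Omega$ is by assumption a Borel subset of a separable complete metric space, and $\sigma$ is a homeomorphism, so what remains is to check that $\omega \mapsto \L_{\omega,\epsilon} f$ is measurable for each fixed $f \in W^{\ell,1}$ and each fixed $\epsilon \in I$. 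This follows from the measurability of $\mathbf T \colon \Omega \to C^r(I\times\mathbb S^1,\mathbb S^1)$: the transfer operator $\L_{\omega,\epsilon}$ depends continuously (in fact smoothly) on $T_{\omega,\epsilon}$ in the relevant topologies, by the explicit change-of-variables formula, so the composition $\omega\mapsto T_{\omega,\epsilon}\mapsto \L_{\omega,\epsilon}f$ is measurable into $W^{\ell,1}$; here the bound \eqref{n1} guarantees everything stays in a fixed bounded subset where the dependence is uniformly continuous.

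Next I would supply the integrability and Lasota--Yorke data. The Lasota--Yorke inequalities \eqref{eq:LY} and \eqref{eq:LYS} of Lemma \ref{lemma:weakLY} give exactly the required inequalities with $\alpha_\omega = \underline\lambda(\omega)^{-\ell}$, $\beta_\omega = B_\ell(\omega)$, and $\gamma_\omega = C_\ell(\omega)$, and these are log-integrable (for $\alpha_\omega$ this is \eqref{lambda}, for $\beta_\omega,\gamma_\omega$ it is the log-integrability asserted in Lemma \ref{lemma:weakLY}), so \eqref{int} holds and $\int_\Omega \log\gamma_\omega\,d\mathbb P<\infty$. The inclusion $W^{\ell,1}(\mathbb S^1)\hookrightarrow W^{\ell-1,1}(\mathbb S^1)$ is compact by the Rellich--Kondrachov theorem on the compact manifold $\mathbb S^1$, and $\L_{\omega,\epsilon}$ extends continuously to $W^{\ell-1,1}$ (indeed it maps $W^{\ell-1,1}$ boundedly to itself, again by Lemma \ref{lemma:weakLY} applied with $\ell-1$ in place of $\ell$, or by the $L^1$-contraction property when $\ell=1$). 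It remains to check the key spectral-gap-type condition $\int_\Omega \log\alpha_\omega\,d\mathbb P < \Lambda(\mathcal R)$. Since $\log\alpha_\omega = -\ell\log\underline\lambda(\omega)$ and \eqref{lambda} gives $\int_\Omega \log\underline\lambda\,d\mathbb P>0$, we get $\int_\Omega\log\alpha_\omega\,d\mathbb P<0$; so it suffices to know $\Lambda(\mathcal R)\ge 0$. This is because the constant density $\mathbf 1\in W^{\ell,1}$ has $\int_{\mathbb S^1}\L_{\omega,\epsilon}^n\mathbf 1\,dm = \int_{\mathbb S^1}\mathbf 1\,dm = 1$ for all $n$ (the transfer operator preserves the integral), hence $\|\L_{\omega,\epsilon}^n\mathbf 1\|_{W^{\ell,1}}\ge \|\L_{\omega,\epsilon}^n\mathbf 1\|_{L^1}\ge 1$, which forces $\Lambda(\mathcal R)\ge 0 > \int_\Omega\log\alpha_\omega\,d\mathbb P$. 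Lemma \ref{QC} then yields $\kappa(\mathcal R)\le \int_\Omega\log\alpha_\omega\,d\mathbb P<0\le\Lambda(\mathcal R)$, so $\mathcal R$ is quasi-compact.

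I expect the only genuinely delicate point to be the strong measurability, specifically establishing that $\omega\mapsto\L_{\omega,\epsilon}f$ is Borel measurable into $W^{\ell,1}$ — one has to be a little careful that the map $T\mapsto \L_T f$ from $C^r(\mathbb S^1,\mathbb S^1)$ (restricted to the bounded set $\{\|T\|_{C^r}\le K(\omega)\}$ and to maps with $\min|T'|\ge\underline\lambda(\omega)>0$) to $W^{\ell,1}$ is continuous, which can be read off Lemma \ref{lemma:Crim} together with the change of variables defining $\L_T$; composing with the measurable $\mathbf T$ and noting that pointwise limits of measurable functions are measurable disposes of it. Everything else is a bookkeeping assembly of the Lasota--Yorke estimates already proved, the standard Rellich--Kondrachov compactness, and the elementary observation that integral-preservation of $\L$ pins the top Lyapunov exponent at (at least) $0$. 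I would write this up by first stating the choice $\alpha_\omega,\beta_\omega,\gamma_\omega,\mathcal B'$, then the four bullet-point verifications of Lemma \ref{QC}, and conclude.
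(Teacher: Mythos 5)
Your proposal is correct and follows the same route as the paper: strong measurability is reduced to measurability of $\omega\mapsto \L_{\omega,\epsilon}f$ (which the paper delegates to \cite[Prop.\ 4.11]{bomfim2016} and \cite[Prop.\ 5.2]{HC}), and quasi-compactness is obtained by feeding the Lasota--Yorke inequalities \eqref{eq:LY}--\eqref{eq:LYS} of Lemma~\ref{lemma:weakLY}, together with \eqref{lambda}, into Lemma~\ref{QC}. You are merely more explicit than the paper about the choice $\mathcal B'=W^{\ell-1,1}$ (resp.\ $L^1$), the compact embedding, and the verification that $\Lambda(\mathcal R)\ge 0$ via integral preservation of the constant density, all of which are correct.
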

	
	\begin{proof}
		Take an arbitrary $1\le \ell \le r-1$. The strong measurability of $(\L_{\omega, \epsilon})_{\omega \in \Omega}$, for $\epsilon \in I$ follows from \cite[Prop 4.11]{bomfim2016} (by arguing as in~\cite[Proof of Proposition 5.2]{HC}) and Definition~\ref{EC}.

		Observe that~\eqref{eq:LY} implies that the cocycle $(\L_{\omega, \epsilon})_{\omega \in \Omega}$ satisfies integrability assumptions~\eqref{int}. The quasi-compactness now follows directly from~\eqref{lambda}, \eqref{eq:LYS} and Lemma~\ref{QC}.
	\end{proof}
	
	\begin{remark}
		Observe that Proposition~\ref{1125} enables us to apply Theorem~\ref{MET} for the cocycle $(\L_{\omega, \epsilon})_{\omega \in \Omega}$ on $W^{\ell, 1}$, for every $1\le \ell \le r-1$ and $\epsilon \in I$. Since $(\L_{\omega, \epsilon})_{\omega \in \Omega}$ is a cocycle of transfer operators, we have that its largest Lyapunov exponent is zero on each $W^{\ell, 1}$.
	\end{remark}
	\begin{remark}\label{W11}
		Setting $\var(f)=\|f'\|_{L^1}$, we note that the $BV$ space introduced in~\cite[p.10]{DS} coincides with $W^{1,1}$. Thus, 
		it follows  easily from Definition~\ref{EC} and Proposition~\ref{covering} that for each $\epsilon \in I$, the cocycle $(\L_{\omega, \epsilon})_{\omega \in \Omega}$ on $W^{1,1}$ satisfies all requirements of~\cite[Definition 13]{DS} except $\mathbb P$-continuity (see~\cite[Definition 5]{DS}). Nevertheless, all the results from~\cite[Section 2]{DS} are applicable to the cocycle $(\L_{\omega, \epsilon})_{\omega \in \Omega}$ on $W^{1,1}$, for each $\epsilon \in I$. Indeed, the $\mathbb P$-continuity assumption
		imposed in~\cite{DS} ensured that in the context of~\cite[Definition 13]{DS}, the version of the multiplicative ergodic theorem (dealing with cocycles acting on non-separable Banach spaces)  as stated in~\cite[Theorem 9]{DS} can be applied. In the present context, one can easily check that the arguments in~\cite[Section 2]{DS} can be repeated verbatim  by simply applying Theorem~\ref{MET} instead of~\cite[Theorem 9]{DS} when needed. 
	\end{remark}
	
	Taking into account Remark~\ref{W11}, the following result is a direct consequence of~\cite[Theorem 20]{DS} and 
	~\cite[Proposition 24]{DS}.
	\begin{lemma}\label{1158}
		For $\epsilon \in I$, there exists a unique measurable family $(h_{\omega, \epsilon})_{\omega \in \Omega} \subset W^{1,1}$ such that $h_{\omega, \epsilon}\ge 0$, $\int_{\mathbb S^1}h_{\omega, \epsilon}\, dm=1$ and 
		\[
		\L_{\omega, \epsilon}h_{\omega, \epsilon}=h_{\sigma \omega, \epsilon}, \quad \text{for $\mathbb P$-a.e. $\omega \in \Omega$.}
		\]
		Moreover, there is
		$\rho(\epsilon) \in (0, 1)$ such that 
		\[
		\lim_{n\to \infty} \frac 1 n \log  \|\L_{\omega, \epsilon}^n f\|_{W^{1,1}}\le \log (\rho(\epsilon)),
		\]
		for $\mathbb P$-a.e. $\omega \in \Omega$ and $f\in W^{1,1}$, $\int_{\mathbb S^1} f\, dm=0$.
	\end{lemma}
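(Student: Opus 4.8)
The plan is to obtain $h_{\omega,\epsilon}$ as a normalized generator of the top Oseledets space of the transfer operator cocycle $(\L_{\omega,\epsilon})_{\omega\in\Omega}$ on $W^{1,1}$, using the random covering property to force this space to be one-dimensional and spanned by a nonnegative density. Fix $\epsilon\in I$. By Proposition~\ref{1125}, the cocycle is strongly measurable and quasi-compact on $W^{1,1}$, so Theorem~\ref{MET} applies and produces an Oseledets splitting $W^{1,1}=V(\omega)\oplus\bigoplus_{j=1}^{l}Y_j(\omega)$ with top exponent $\lambda_1=0$ (see the remark following Proposition~\ref{1125}). By Remark~\ref{W11}, this cocycle satisfies all the hypotheses of~\cite[Definition~13]{DS} except $\mathbb P$-continuity, and the arguments of~\cite[Section~2]{DS} carry over verbatim once Theorem~\ref{MET} is substituted for~\cite[Theorem~9]{DS}. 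Combining this with the random covering of Proposition~\ref{covering}, I would invoke~\cite[Theorem~20]{DS} to conclude that $0$ is a simple exponent: $\dim Y_1(\omega)=1$, the second Lyapunov exponent $\log\rho(\epsilon)$ of $(\L_{\omega,\epsilon})$ on $W^{1,1}$ is strictly negative (so $\rho(\epsilon)\in(0,1)$, and it is deterministic since $\sigma$ is ergodic), and $Y_1(\omega)$ is spanned by some $\hat h_\omega\ge 0$.

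Next I would observe that, a nonnegative integrable function of zero Lebesgue integral being identically zero, one has $\int_{\mathbb S^1}\hat h_\omega\,dm>0$, so $h_{\omega,\epsilon}:=\hat h_\omega\big(\int_{\mathbb S^1}\hat h_\omega\,dm\big)^{-1}$ is the unique element of $Y_1(\omega)$ with $h_{\omega,\epsilon}\ge 0$ and $\int_{\mathbb S^1}h_{\omega,\epsilon}\,dm=1$. The equivariance $\L_{\omega,\epsilon}Y_1(\omega)=Y_1(\sigma\omega)$, together with the facts that $\L_{\omega,\epsilon}$ preserves both the Lebesgue integral and nonnegativity, forces $\L_{\omega,\epsilon}h_{\omega,\epsilon}=h_{\sigma\omega,\epsilon}$ for $\mathbb P$-a.e.\ $\omega$; measurability of $\omega\mapsto h_{\omega,\epsilon}$ follows from the measurable dependence of the Oseledets splitting in Theorem~\ref{MET}. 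For uniqueness, any measurable equivariant family of nonnegative densities of integral one has zero Lyapunov exponent -- its $W^{1,1}$-norm is $\mathbb P$-a.e.\ tempered via~\eqref{eq:LY}, while its $L^1$-norm is constantly $1$ -- hence lies $\mathbb P$-a.e.\ in the one-dimensional space $Y_1(\omega)$, and the normalization pins it down; this is where~\cite[Proposition~24]{DS} enters.

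Finally, for the decay statement, $\{f\in W^{1,1}:\int_{\mathbb S^1}f\,dm=0\}$ is a closed, codimension-one subspace transverse to $Y_1(\omega)=\spn\{h_{\omega,\epsilon}\}$, hence coincides with $V(\omega)\oplus\bigoplus_{j\ge 2}Y_j(\omega)$ by a dimension count; on this subspace Theorem~\ref{MET} gives $\limsup_{n\to\infty}\frac1n\log\|\L_{\omega,\epsilon}^nf\|_{W^{1,1}}\le\log\rho(\epsilon)$, as desired. The one point genuinely requiring care is the legitimacy of importing~\cite[Theorem~20]{DS} and~\cite[Proposition~24]{DS} despite $W^{1,1}$ being non-separable and $\mathbb P$-continuity failing; this is exactly what Remark~\ref{W11} settles, the Lasota--Yorke inequalities of Lemma~\ref{lemma:weakLY} and the random covering of Proposition~\ref{covering} providing the remaining hypotheses, so that the rest reduces to the bookkeeping sketched above.
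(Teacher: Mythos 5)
Your proposal is correct and follows essentially the same route as the paper, which simply derives the lemma from \cite[Theorem 20]{DS} and \cite[Proposition 24]{DS} via Remark~\ref{W11}, with Proposition~\ref{1125} and Proposition~\ref{covering} supplying the hypotheses; your write-up is a faithful unpacking of that citation. One small slip: the point of Remark~\ref{W11} is that $W^{1,1}$ with $\var(f)=\|f'\|_{L^1}$ \emph{is} separable (unlike the classical $BV$ space of \cite{DS}), which is precisely why the separable MET of Theorem~\ref{MET} can replace \cite[Theorem 9]{DS} and the $\mathbb P$-continuity hypothesis can be dropped --- not ``despite $W^{1,1}$ being non-separable.''
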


	Our next goal is to extend the second part of Lemma~\ref{1158} to any $W^{\ell, 1}$, $1\le \ell \le r-1$.
	
	\begin{lemma}\label{aux}
		For $\epsilon \in I$ and $1\le \ell \le r-1$, we have that
		\begin{equation}\label{limit}
			\lim_{n\to \infty} \frac 1 n\log  \|\L_{\omega, \epsilon}^n f\|_{W^{\ell,1}}\le \log (\rho(\epsilon)),
		\end{equation}
		for $\mathbb P$-a.e. $\omega \in \Omega$ and $f\in W^{\ell,1}$, $\int_{\mathbb S^1} f\, dm=0$, where $\rho(\epsilon)$ is given by Lemma~\ref{1158}.
	\end{lemma}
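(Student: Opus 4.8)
The plan is to bootstrap the decay estimate from $W^{1,1}$ (Lemma~\ref{1158}) up to $W^{\ell,1}$ by induction on $\ell$, using the Lasota–Yorke inequalities of Lemma~\ref{lemma:weakLY} together with the multiplicative structure of the cocycle. Fix $\epsilon \in I$. The base case $\ell=1$ is exactly Lemma~\ref{1158}. Assume the estimate~\eqref{limit} holds for $\ell-1$, i.e. $\limsup_{n\to\infty}\frac1n\log\|\L_{\omega,\epsilon}^n f\|_{W^{\ell-1,1}}\le \log\rho(\epsilon)$ for $\mathbb P$-a.e.\ $\omega$ and every mean-zero $f\in W^{\ell-1,1}$. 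We want to deduce the same bound for mean-zero $f\in W^{\ell,1}$.

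First I would iterate the strong Lasota–Yorke inequality~\eqref{eq:LYS} along the orbit. Writing $\L_{\omega,\epsilon}^n = \L_{\sigma^{n-1}\omega,\epsilon}\circ\cdots\circ\L_{\omega,\epsilon}$ and applying~\eqref{eq:LYS} at each step, one gets a telescoping bound of the form
\[
\|\L_{\omega,\epsilon}^n f\|_{W^{\ell,1}} \le \Big(\prod_{j=0}^{n-1}\underline\lambda(\sigma^j\omega)^{-\ell}\Big)\|f\|_{W^{\ell,1}} + \sum_{j=0}^{n-1}\Big(\prod_{i=j+1}^{n-1}\underline\lambda(\sigma^i\omega)^{-\ell}\Big)B_\ell(\sigma^j\omega)\,\|\L_{\omega,\epsilon}^j f\|_{W^{\ell-1,1}}.
\]
By the ergodic theorem and~\eqref{lambda}, $\frac1n\sum_{j=0}^{n-1}\log\underline\lambda(\sigma^j\omega)\to \int_\Omega\log\underline\lambda\,d\mathbb P =: L > 0$, so the products $\prod\underline\lambda^{-\ell}$ decay like $e^{-\ell L n}$ up to subexponential errors; in particular the first term decays at a definite exponential rate. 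For the sum, note that if $\int f\,dm=0$ then each $\L_{\omega,\epsilon}^j f$ is still mean-zero (transfer operators preserve the integral), so by the inductive hypothesis $\|\L_{\omega,\epsilon}^j f\|_{W^{\ell-1,1}}\le C_\omega e^{(\log\rho(\epsilon)+\delta)j}$ for any $\delta>0$ and some tempered $C_\omega$; combined with the fact that $B_\ell$ is log-integrable (hence, by Birkhoff and Remark~\ref{TR}, tempered and with controlled Birkhoff averages) and with the exponential decay of the remaining product $\prod_{i>j}\underline\lambda^{-\ell}$, a standard estimate of the geometric-type sum gives $\limsup_n\frac1n\log\|\L_{\omega,\epsilon}^n f\|_{W^{\ell,1}}\le \max\{-\ell L,\ \log\rho(\epsilon)\}$.

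The remaining point is that $\log\rho(\epsilon)$ dominates, i.e.\ $-\ell L\le \log\rho(\epsilon)$, so the $\max$ collapses to the desired bound. This follows because $W^{1,1}$-decay already forces $\log\rho(\epsilon)\le -L$ (apply~\eqref{eq:LYS} with $\ell=1$ and the same iteration to a mean-zero $f$, which shows the $W^{1,1}$-norm cannot decay faster than the compactness rate $-L$ allows — more precisely one gets $\log\rho(\epsilon)$ is at least $\kappa$ of the cocycle on $W^{1,1}$, which by Lemma~\ref{QC} is $\le -L$... wait, that is the wrong direction). Let me instead argue directly: the cleanest route is to observe that the $\limsup$ we obtained, call it $\mu_\ell(\epsilon)$, satisfies $\mu_\ell(\epsilon)\le\max\{\mu_{\ell-1}(\epsilon),\, -\ell L + \text{(lower-order)}\}$, but also $\mu_\ell(\epsilon)\ge \mu_{1}(\epsilon)=\log\rho(\epsilon)$ is false in general — rather, since $\|\cdot\|_{W^{1,1}}\le\|\cdot\|_{W^{\ell,1}}$ we only get $\mu_1(\epsilon)$ bounds the $W^{1,1}$-rate from above, not below. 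The correct and simple fix: if $-\ell L < \log\rho(\epsilon)$ we are already done since then $\mu_\ell(\epsilon)\le\log\rho(\epsilon)$; if $-\ell L \ge \log\rho(\epsilon)$, then replace the crude bound on the tail product by keeping the inductive decay rate $\log\rho(\epsilon)$ throughout and absorbing the (slower) product decay, which still yields $\mu_\ell(\epsilon)\le\log\rho(\epsilon)$ because a convolution of a sequence decaying at rate $e^{-\ell L n}$ with one decaying at rate $e^{(\log\rho(\epsilon))n}$ decays at the slower of the two rates, namely $e^{(\log\rho(\epsilon))n}$ (using $\log\rho(\epsilon) > -\ell L$). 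In both cases $\mu_\ell(\epsilon)\le\log\rho(\epsilon)$, completing the induction.

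The main obstacle is purely bookkeeping: making the iteration of~\eqref{eq:LYS} along with the two competing decay/growth rates precise, and handling the tempered (non-integrable-in-general) behaviour of the random constants $B_\ell(\sigma^j\omega)$ and of the error terms $e^{\delta j}$ — this is where Proposition~\ref{temp} and the freedom to take $\delta>0$ arbitrarily small are essential, so that all subexponential prefactors get absorbed and only the true exponential rate $\log\rho(\epsilon)$ survives in the $\limsup$. Everything else is a direct consequence of results already established in the excerpt.
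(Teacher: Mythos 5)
Your overall strategy --- induction on $\ell$, iterating the strong Lasota--Yorke inequality \eqref{eq:LYS} and feeding in the inductive bound on the $W^{\ell-1,1}$-norm of the (still mean-zero) iterates $\L_{\omega,\epsilon}^jf$ --- is in substance the route the paper takes; the paper simply packages the hand iteration into an application of \cite[Lemma C.5]{GTQ} to the rescaled cocycle $\bar\L_{\omega,\epsilon}:=\rho(\epsilon)^{-1}\L_{\omega,\epsilon}$, for which the inductive hypothesis says the weak ($W^{\ell-1,1}$) norms of $\bar\L_{\omega,\epsilon}^nf$ grow subexponentially while \eqref{eq:LYS} provides a Lasota--Yorke inequality with contraction constant $(\rho(\epsilon)\underline\lambda(\omega)^{\ell})^{-1}$ of negative log-integral.

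There is, however, a genuine flaw in your concluding case analysis. Writing $L:=\int_\Omega\log\underline\lambda\,d\mathbb P>0$, you correctly arrive at $\mu_\ell(\epsilon)\le\max\{-\ell L,\log\rho(\epsilon)\}$, and the case $-\ell L<\log\rho(\epsilon)$ is fine. But your treatment of the case $-\ell L\ge\log\rho(\epsilon)$ is self-contradictory: you invoke ``$\log\rho(\epsilon)>-\ell L$'' inside the very case where the opposite is assumed, and the convolution of a sequence decaying like $e^{-\ell Ln}$ with one decaying like $e^{\log\rho(\epsilon)n}$ decays at the \emph{slower} of the two rates, which in that case is $e^{-\ell Ln}$; you therefore only obtain $\mu_\ell(\epsilon)\le-\ell L$, which does not imply the claimed bound. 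The paper sidesteps this by assuming, without loss of generality, that $-\log\rho(\epsilon)-\int_\Omega\log\underline\lambda\,d\mathbb P<0$ (its condition \eqref{neg}), i.e.\ $\log\rho(\epsilon)>-L\ge-\ell L$, so that the maximum always collapses to $\log\rho(\epsilon)$. That reduction is legitimate because $\rho(\epsilon)$ from Lemma~\ref{1158} is only an upper bound on the $W^{1,1}$ decay rate, and enlarging it (say to $\max\{\rho(\epsilon),e^{-L/2}\}$) yields a conclusion that still suffices for everything downstream (Proposition~\ref{prop} only requires \emph{some} exponential rate $\lambda(\epsilon)>0$). You need to add such a reduction explicitly; as written, the second branch of your argument does not close.
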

	
	\begin{proof}
		Let us fix an arbitrary $\epsilon \in I$.
		We note that the existence of the limit in~\eqref{limit} follows from Theorem~\ref{MET}. Moreover, without any loss of generality, we may assume that 
		\begin{equation}\label{neg}
			-\log \rho(\epsilon)-\int_\Omega \log \underline{\lambda} (\omega)\, d\mathbb P(\omega)<0.
		\end{equation}
		We proceed by induction on $\ell$. For $\ell=1$, the desired conclusion follows from Lemma~\ref{1158}. Assume now that the conclusion holds for some $2\le \ell \le r-2$ and take $f\in W^{\ell+1, 1}$, $\int_{\mathbb S^1}f\, dm=0$. By our induction hypothesis,
		\begin{equation}\label{2056}
			\lim_{n\to \infty} \frac 1 n \log  \|\L_{\omega, \epsilon}^n f\|_{W^{\ell,1}}\le \log (\rho(\epsilon)), \quad \text{for $\mathbb P$-a.e. $\omega \in \Omega$.}
		\end{equation}
		We consider the cocycle $(\bar \L_{\omega, \epsilon})_{\omega \in \Omega}$ over $\sigma$, where $\bar \L_{\omega, \epsilon}=\frac{1}{\rho(\epsilon)}\L_{\omega, \epsilon}$. Observe that~\eqref{2056} implies that 
		\begin{equation}\label{2057}
			\lim_{n\to \infty} \frac 1 n \log  \|\bar \L_{\omega, \epsilon}^n f\|_{W^{\ell,1}}\le 0, \quad \text{for $\mathbb P$-a.e. $\omega \in \Omega$.}
		\end{equation}
		Moreover, \eqref{eq:LY} and~\eqref{eq:LYS} give that for $\mathbb P$-a.e. $\omega \in \Omega$ and $h\in W^{\ell+1, 1}$,
		\begin{equation}\label{eq:LYb}
			\|\bar \L_{\omega, \epsilon}h\|_{W^{\ell+1,1}}\le \frac{1}{\rho(\epsilon)} C_{\ell+1}(\omega)\|f\|_{W^{\ell+1,1}}
		\end{equation} 
		and
		\begin{equation}\label{eq:LYSb}
			\|\bar \L_{\omega, \epsilon }h\|_{W^{\ell+1,1}}\le \frac{1}{\rho(\epsilon) \underline{\lambda}(\omega)^{\ell+1}}\|h\|_{W^{\ell+1,1}}+\frac{1}{\rho(\epsilon)}B_{\ell+1}(\omega)\|h\|_{W^{\ell,1}}.
		\end{equation}
		Furthermore, \eqref{neg} implies that 
		\begin{equation}\label{neg2}
			\int_\Omega \log \bigg (\frac{1}{\rho(\epsilon) \underline{\lambda}(\omega)^{\ell+1}} \bigg )\, d\mathbb P(\omega)<0.
		\end{equation}
		Since the inclusion $W^{\ell+1,1}\hookrightarrow W^{\ell,1}$ is compact, it follows from~\eqref{2057}, \eqref{eq:LYb}, \eqref{eq:LYSb}, \eqref{neg2} and~\cite[Lemma C.5]{GTQ} that 
		\[
		\lim_{n\to \infty} \frac 1 n \log \|\bar \L_{\omega, \epsilon}^n f\|_{W^{\ell+1,1}}\le 0, \quad \text{for $\mathbb P$-a.e. $\omega \in \Omega$.}
		\]
		Hence, 
		\[
		\lim_{n\to \infty} \frac 1 n \log \| \L_{\omega, \epsilon}^n f\|_{W^{\ell+1,1}}\le \log \rho(\epsilon), \quad \text{for $\mathbb P$-a.e. $\omega \in \Omega$.}
		\]
		The proof of the proposition is completed. 
	\end{proof}
	
	Let $Y_1^{\ell, \epsilon}(\omega)\subset W^{\ell, 1}$, $\omega \in \Omega$ denote the Oseledets subspace corresponding to the largest Lyapunov exponent (which is zero) of the cocycle $(\L_{\omega, \epsilon})_{\omega \in \Omega}$ on $W^{\ell, 1}$. 
	\begin{proposition}
		We have that $\dim Y_1^{\ell, \epsilon}(\omega)=1$ and $Y_1^{\ell, \epsilon}(\omega)$ is spanned by $h_{\omega, \epsilon}$, for $\epsilon \in I$, $\mathbb P$-a.e. $\omega \in \Omega$ and $1\le \ell \le r-1$.
	\end{proposition}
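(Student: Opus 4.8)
The plan is to run the multiplicative ergodic theorem for the $W^{\ell,1}$-cocycle $(\L_{\omega,\epsilon})_{\omega\in\Omega}$ (legitimate by Proposition~\ref{1125}) and to exploit two structural facts: the functional $\psi:=\int_{\mathbb S^1}\cdot\,dm$ is fixed by every $\L_{\omega,\epsilon}$ (transfer operators preserve Lebesgue mass), and the hyperplane $V^\ell_\omega:=\{f\in W^{\ell,1}:\int_{\mathbb S^1}f\,dm=0\}$ is contracted at a strictly negative rate by Lemma~\ref{aux}. First I would record that, by the Remark following Proposition~\ref{1125}, the top Lyapunov exponent of the cocycle on $W^{\ell,1}$ is $\lambda_1=0$, and that it is an \emph{exceptional} exponent: applying Lemma~\ref{QC} through the Lasota--Yorke inequality~\eqref{eq:LYS} gives $\kappa\le -\ell\int_\Omega\log\underline\lambda\,d\mathbb P<0=\lambda_1$. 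Hence $Y_1^{\ell,\epsilon}(\omega)$ is finite-dimensional and nonzero for $\mathbb P$-a.e.\ $\omega$.

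Next I would show $\dim Y_1^{\ell,\epsilon}(\omega)=1$. Since $\psi\circ\L_{\omega,\epsilon}=\psi$, the subspace $V^\ell_\omega$ is closed, of codimension one, and $\L_{\omega,\epsilon}$-equivariant; by Lemma~\ref{aux}, every $v\in V^\ell_\omega\setminus\{0\}$ satisfies $\limsup_n\frac1n\log\|\L_\omega^n v\|_{W^{\ell,1}}\le\log\rho(\epsilon)<0$. On the other hand, any $v\in W^{\ell,1}$ whose component along $Y_1^{\ell,\epsilon}(\omega)$ in the Oseledets decomposition is nonzero has exponential growth rate exactly $\lambda_1=0$ under $\L_\omega^n$ (that component dominates, all other exponents being negative). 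Therefore $V^\ell_\omega$ is contained in the ``slow'' Oseledets subspace $V(\omega)\oplus\bigoplus_{j\ge 2}Y_j(\omega)$; comparing (finite) codimensions forces equality, so $\dim Y_1^{\ell,\epsilon}(\omega)=1$, $W^{\ell,1}=Y_1^{\ell,\epsilon}(\omega)\oplus V^\ell_\omega$, and $\psi$ does not vanish on $Y_1^{\ell,\epsilon}(\omega)$.

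It remains to identify the line $Y_1^{\ell,\epsilon}(\omega)$ with $\mathbb R\, h_{\omega,\epsilon}$. Let $\Pi_\omega$ be the Oseledets projection onto $Y_1^{\ell,\epsilon}(\omega)$ along $V^\ell_\omega$; it depends measurably on $\omega$ and, by the standard properties of the Oseledets splitting (see~\cite{GTQ} and~\cite[Section 2]{DS}), has tempered operator norm. Set $g_\omega:=\Pi_\omega\mathbf 1\in Y_1^{\ell,\epsilon}(\omega)$, where $\mathbf 1\in W^{\ell,1}$ is the constant density. Then $\psi(g_\omega)=\psi(\mathbf 1)=1$ since $\mathbf 1-g_\omega\in V^\ell_\omega=\ker\psi$, so $g_\omega\ne 0$ spans $Y_1^{\ell,\epsilon}(\omega)$; equivariance of the splitting together with the normalization $\psi(g_\omega)=1$ yields $\L_{\omega,\epsilon}g_\omega=g_{\sigma\omega}$, and $\omega\mapsto g_\omega$ is measurable. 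Writing $\mathbf 1=g_{\sigma^{-n}\omega}+u_n$ with $u_n:=\mathbf 1-g_{\sigma^{-n}\omega}\in V^\ell_{\sigma^{-n}\omega}$, equivariance gives
\[
\L_{\sigma^{-n}\omega,\epsilon}^{\,n}\mathbf 1=g_\omega+\L_{\sigma^{-n}\omega,\epsilon}^{\,n}u_n .
\]
Bounding $\|u_n\|_{W^{\ell,1}}\le 1+\|\Pi_{\sigma^{-n}\omega}\|$ and using the exponential contraction of $\L_{\sigma^{-n}\omega,\epsilon}^{\,n}$ on $V^\ell_{\sigma^{-n}\omega}$ (a strictly negative rate, with a prefactor tempered in the base point), Proposition~\ref{temp} lets me choose the temperedness parameters so that $\|\L_{\sigma^{-n}\omega,\epsilon}^{\,n}u_n\|_{W^{\ell,1}}\to 0$; hence $\L_{\sigma^{-n}\omega,\epsilon}^{\,n}\mathbf 1\to g_\omega$ in $W^{\ell,1}$ for a.e.\ $\omega$. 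As each $\L_{\sigma^{-n}\omega,\epsilon}^{\,n}\mathbf 1$ is a nonnegative density, so is the limit $g_\omega$; since moreover $\int g_\omega\,dm=\psi(g_\omega)=1$, $\omega\mapsto g_\omega$ is measurable and $\L_{\omega,\epsilon}g_\omega=g_{\sigma\omega}$, the uniqueness in Lemma~\ref{1158} forces $g_\omega=h_{\omega,\epsilon}$ for a.e.\ $\omega$. This simultaneously yields $h_{\omega,\epsilon}\in W^{\ell,1}$ and $Y_1^{\ell,\epsilon}(\omega)=\mathbb R\, h_{\omega,\epsilon}$, which is the claim.

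I expect the genuine obstacle to be this last step: extracting from the multiplicative ergodic theorem the tempered-in-$\omega$ exponential contraction of $\L_{\sigma^{-n}\omega,\epsilon}^{\,n}$ on the backward fibres $V^\ell_{\sigma^{-n}\omega}$ together with the temperedness of the Oseledets projections, and then balancing these against the subexponentially growing prefactors via Proposition~\ref{temp}, so that the convergence $\L_{\sigma^{-n}\omega,\epsilon}^{\,n}\mathbf 1\to g_\omega$ truly holds along the backward orbit. All of this is available through~\cite{GTQ} and~\cite[Section 2]{DS}; alternatively, the positivity of the top Oseledets direction can be read off from the random Perron--Frobenius machinery underlying Lemma~\ref{1158}. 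The first two steps, by contrast, are soft consequences of the existence of the fixed functional $\psi$ and of Lemma~\ref{aux}.
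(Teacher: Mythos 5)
Your proof is correct and follows essentially the same route as the paper: the paper likewise handles $\ell=1$ via Lemma~\ref{1158}, obtains a nonnegative, normalized, equivariant element of $Y_1^{\ell,\epsilon}(\omega)$ by the pullback construction (which it outsources to \cite[Lemma 2.10]{D} rather than writing out the limit $\L_{\sigma^{-n}\omega,\epsilon}^{\,n}\mathbf 1\to g_\omega$ as you do), identifies it with $h_{\omega,\epsilon}$ through the uniqueness in Lemma~\ref{1158}, and deduces the spanning/one-dimensionality from Lemma~\ref{aux}. The only difference is cosmetic: you prove $\dim Y_1^{\ell,\epsilon}(\omega)=1$ first by the codimension count against $\ker\psi$, whereas the paper extracts it at the end, and you make explicit the tempered-projection estimates that the cited lemma contains.
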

	
	\begin{proof}
		Take an arbitrary $\epsilon \in I$.
		For $\ell=1$, the desired conclusions follow directly from Lemma~\ref{1158}. Assume now that $\ell >1$. By arguing as in the proof of~\cite[Lemma 2.10]{D}, we find that there exists $h_{\omega, \epsilon}^\ell \in Y_1^{\ell, \epsilon}(\omega)\subset W^{\ell, 1}$ satisfying $\int_{\mathbb S^1}h_{\omega, \epsilon}^\ell \, dm=1$, $h_{\omega, \epsilon}^\ell \ge 0$ and such that $\L_{\omega, \epsilon} h_{\omega, \epsilon}^{\ell}=
		h_{\sigma \omega, \epsilon}^\ell$ for $\mathbb P$-a.e. $\omega \in \Omega$. Moreover, $\omega \mapsto h_{\omega, \epsilon}^\ell$ is measurable. 
		Since $W^{\ell, 1} \subset W^{1,1}$, it follows from the uniqueness in Lemma~\ref{1158} that $h_{\omega, \epsilon}^\ell=h_{\omega, \epsilon}$ for $\mathbb P$-a.e. $\omega \in \Omega$.
		Finally, the fact that $h_{\omega, \epsilon}$ spans $Y_1^{\ell, \epsilon}$ follows from Lemma~\ref{aux}.
	\end{proof}
	
	\begin{remark}
		From the previous observations, we find that for each $\epsilon \in I$ and $1\le \ell \le r-1$, the Oseledets splitting of the cocycle $(\L_{\omega, \epsilon})_{\omega \in \Omega}$ on $W^{\ell, 1}$ is given by
		\[
		W^{\ell, 1}=\text{span} \{h_{\omega, \epsilon}\} \oplus W_0^{\ell, 1},
		\]
		for $\mathbb P$-a.e $\omega \in \Omega$, where
		\[
		W_0^{\ell, 1}:=\bigg \{h\in W^{\ell, 1}: \int_{\mathbb S^1} h\, dm=0 \bigg \}.
		\]
	\end{remark}
	
	By taking into account Lemma~\ref{aux} and applying~\cite[Proposition 3.2]{BD}, we obtain the following result.
	\begin{proposition}\label{prop}
		For $\epsilon \in I$, there exist $\lambda (\epsilon) >0$ and $K^\epsilon \in \mathcal K$ such that 
		\[
		\| \L_{\omega, \epsilon}^n f\|_{W^{\ell, 1}} \le K^\epsilon (\omega)e^{-\lambda (\epsilon)n}\|f\|_{W^{\ell, 1}}, 
		\]
		for $\mathbb P$-a.e. $\omega \in \Omega$, $f\in W_0^{\ell, 1}$, $n\ge 0$ and $1\le \ell \le r-1$.
	\end{proposition}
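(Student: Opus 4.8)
The plan is to upgrade the pointwise exponential decay furnished by Lemma~\ref{aux} to a uniform bound with a tempered random constant, by invoking the general perturbative estimate \cite[Proposition 3.2]{BD}. Fix $\epsilon\in I$ and $1\le\ell\le r-1$. By Proposition~\ref{1125} the cocycle $(\L_{\omega,\epsilon})_{\omega\in\Omega}$ is strongly measurable and quasi-compact on $W^{\ell,1}$, and by the remark preceding the statement its Oseledets splitting is $W^{\ell,1}=\spn\{h_{\omega,\epsilon}\}\oplus W_0^{\ell,1}$ for $\mathbb P$-a.e. $\omega\in\Omega$, with top Lyapunov exponent $0$. Note that $W_0^{\ell,1}$ is genuinely equivariant: since transfer operators preserve the Lebesgue integral, $\int_{\mathbb S^1}\L_{\omega,\epsilon}f\,dm=\int_{\mathbb S^1}f\,dm$, whence $\L_{\omega,\epsilon}(W_0^{\ell,1})\subseteq W_0^{\ell,1}$. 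Thus $W_0^{\ell,1}$ is precisely the equivariant complement of the top Oseledets space, and Lemma~\ref{aux} tells us that the asymptotic growth rate of $(\L_{\omega,\epsilon}|_{W_0^{\ell,1}})_{\omega\in\Omega}$ is at most $\log\rho(\epsilon)<0$; in particular the spectral gap above $W_0^{\ell,1}$ has size at least $-\log\rho(\epsilon)$.

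Then I would pick any $\lambda(\epsilon)$ with $0<\lambda(\epsilon)<-\log\rho(\epsilon)$, say $\lambda(\epsilon):=-\tfrac12\log\rho(\epsilon)$, and apply \cite[Proposition 3.2]{BD} to the quasi-compact cocycle $(\L_{\omega,\epsilon})_{\omega\in\Omega}$ on $W^{\ell,1}$: this yields a tempered random variable $K^\epsilon_\ell\in\mathcal K$ (which we may take $\ge 1$) such that $\|\L_{\omega,\epsilon}^nf\|_{W^{\ell,1}}\le K^\epsilon_\ell(\omega)e^{-\lambda(\epsilon)n}\|f\|_{W^{\ell,1}}$ for $\mathbb P$-a.e. $\omega\in\Omega$, all $n\ge 0$ and all $f\in W_0^{\ell,1}$ (the case $n=0$ being covered since $K^\epsilon_\ell\ge1$). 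Because $\rho(\epsilon)$ is the same for every $\ell$ (Lemma~\ref{aux}), the rate $\lambda(\epsilon)$ does not depend on $\ell$, so setting $K^\epsilon:=\max_{1\le\ell\le r-1}K^\epsilon_\ell$ — which is again tempered by Remark~\ref{TR} — delivers the asserted bound simultaneously on every $W_0^{\ell,1}$, $1\le\ell\le r-1$.

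The main point requiring care is the verification that the hypotheses of \cite[Proposition 3.2]{BD} are in force: strong measurability and quasi-compactness come from Proposition~\ref{1125}; the identification of $W_0^{\ell,1}$ as the authentic equivariant fast subbundle, sitting below a genuine spectral gap, comes from the remark preceding the statement together with the equivariance computation above; and the bound $\le\log\rho(\epsilon)$ on its growth rate is exactly Lemma~\ref{aux}. One should also observe that the constant produced by \cite[Proposition 3.2]{BD} is tempered essentially by construction — it is assembled from the MET data and regularised via Birkhoff's ergodic theorem, in the spirit of Proposition~\ref{temp} — so that $K^\epsilon\in\mathcal K$ as claimed; the parameter $\epsilon$ enters only as a fixed label here, so no measurability or discretisation subtlety of the kind discussed in the introduction arises at this stage.
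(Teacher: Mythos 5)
Your proposal is correct and follows essentially the same route as the paper, which likewise deduces the statement by combining Lemma~\ref{aux} (giving the asymptotic rate $\log\rho(\epsilon)<0$ on $W_0^{\ell,1}$, uniformly in $\ell$) with an application of \cite[Proposition 3.2]{BD}. The additional details you supply --- the equivariance of $W_0^{\ell,1}$, the explicit choice $\lambda(\epsilon)=-\tfrac12\log\rho(\epsilon)$, and taking the maximum of the tempered constants over $1\le\ell\le r-1$ --- are exactly the routine verifications the paper leaves implicit.
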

	
	\begin{proposition}\label{1655}
		For each $1\le \ell \le r-1$, there exists $D_\ell \in \mathcal K$ such that 
		\begin{equation}\label{2220}
			\| \L_{\sigma^{-n} \omega}^n f\|_{W^{\ell, 1}} \le D_\ell (\omega)\|f \|_{W^{\ell, 1}},
		\end{equation}
		for $\mathbb P$-a.e. $\omega \in \Omega$, $n\in \N$ and $f\in W^{\ell, 1}$.
	\end{proposition}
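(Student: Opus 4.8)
The plan is to use the Oseledets splitting $W^{\ell,1}=\mathrm{span}\{h_\omega\}\oplus W_0^{\ell,1}$ of the cocycle $(\L_\omega)_{\omega\in\Omega}$ established in the preceding propositions (here, as usual, $\L_\omega:=\L_{\omega,0}$ and $h_\omega:=h_{\omega,0}$), together with the exponential contraction on $W_0^{\ell,1}$ from Proposition~\ref{prop} and the control of compositions with $\sigma^{-n}$ from Proposition~\ref{temp}. Write $\lambda:=\lambda(0)>0$ and let $K^0\in\mathcal K$ be the random variable furnished by Proposition~\ref{prop} at $\epsilon=0$. The one extra ingredient needed is that $\omega\mapsto\|h_\omega\|_{W^{\ell,1}}$ belongs to $\mathcal K$. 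This is because $\|h_\omega\|_{W^{\ell,1}}$ is exactly the operator norm of the Oseledets projection $\Pi_\omega\colon W^{\ell,1}\to\mathrm{span}\{h_\omega\}$ onto the top Oseledets space along $W_0^{\ell,1}$: indeed $\Pi_\omega f=\big(\int_{\mathbb S^1}f\,dm\big)h_\omega$, and since $\big|\int_{\mathbb S^1}f\,dm\big|\le\|f\|_{L^1}\le\|f\|_{W^{\ell,1}}$ with equality for the constant function, one has $\|\Pi_\omega\|_{W^{\ell,1}\to W^{\ell,1}}=\|h_\omega\|_{W^{\ell,1}}$; the Oseledets projections are tempered in the multiplicative ergodic framework we use (Theorem~\ref{MET}, cf. the results imported via Remark~\ref{W11}), so $\|h_\omega\|_{W^{\ell,1}}\in\mathcal K$. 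Alternatively, this can be obtained by induction on $\ell$ directly from the Lasota--Yorke estimate~\eqref{eq:LYS}, iterated along the backward orbit, using that $\prod_{i=1}^{m}\underline{\lambda}(\sigma^{-i}\omega)^{-\ell}$ has negative drift by~\eqref{lambda} and the Birkhoff ergodic theorem, at the cost of a routine verification that the resulting geometric series defines a tempered random variable.

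Granting this, fix $f\in W^{\ell,1}$ and $n\in\N$, set $c:=\int_{\mathbb S^1}f\,dm$ and $g:=f-c\,h_{\sigma^{-n}\omega}$. Since $\int_{\mathbb S^1}h_{\sigma^{-n}\omega}\,dm=1$ we have $g\in W_0^{\ell,1}$, and equivariance of $h$ gives $\L_{\sigma^{-n}\omega}^{n}h_{\sigma^{-n}\omega}=h_\omega$, so that $\L_{\sigma^{-n}\omega}^{n}f=c\,h_\omega+\L_{\sigma^{-n}\omega}^{n}g$. Applying Proposition~\ref{prop} at the base point $\sigma^{-n}\omega$ to $g\in W_0^{\ell,1}$, and using $|c|\le\|f\|_{W^{\ell,1}}$ together with $\|g\|_{W^{\ell,1}}\le\|f\|_{W^{\ell,1}}\big(1+\|h_{\sigma^{-n}\omega}\|_{W^{\ell,1}}\big)$, one gets
\[
\|\L_{\sigma^{-n}\omega}^{n}f\|_{W^{\ell,1}}\le\|f\|_{W^{\ell,1}}\Big(\|h_\omega\|_{W^{\ell,1}}+K^0(\sigma^{-n}\omega)e^{-\lambda n}\big(1+\|h_{\sigma^{-n}\omega}\|_{W^{\ell,1}}\big)\Big).
\]
Now apply Proposition~\ref{temp} to the tempered variable $\omega\mapsto\max\{\|h_\omega\|_{W^{\ell,1}},K^0(\omega)\}$ with parameter $a:=\lambda/3$, obtaining $E\in\mathcal K$ with $\|h_\omega\|_{W^{\ell,1}}\le E(\omega)$, $\|h_{\sigma^{-n}\omega}\|_{W^{\ell,1}}\le E(\omega)e^{an}$ and $K^0(\sigma^{-n}\omega)\le E(\omega)e^{an}$ for all $n\ge0$ and a.e. $\omega$. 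Since $a-\lambda<0$ and $2a-\lambda<0$, the factors $e^{(a-\lambda)n}$ and $e^{(2a-\lambda)n}$ are at most $1$, so the bracket above is bounded by $2E(\omega)+E(\omega)^2$; hence~\eqref{2220} holds with $D_\ell:=2E+E^2\in\mathcal K$ (Remark~\ref{TR}), uniformly in $n$ and $f$. The relevant $\sigma$-invariant full-measure set is obtained by intersecting (over $n\in\N$) the full-measure sets supplied by Lemma~\ref{1158}, Proposition~\ref{prop} and Proposition~\ref{temp}.

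I expect the only genuinely delicate point to be the auxiliary temperedness $\|h_\omega\|_{W^{\ell,1}}\in\mathcal K$ — that is, controlling the backward accumulation of the expansion constant $\underline{\lambda}$ in a tempered fashion — whether one reads it off from temperedness of the Oseledets projections or re-derives it by a Lasota--Yorke bootstrap. Once this is in place, the remainder is the same bookkeeping with tempered random variables already used in the proofs of Theorems~\ref{SS} and~\ref{LR}.
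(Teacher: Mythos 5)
Your proof is correct and follows essentially the same route as the paper: decompose $f$ into its mean-zero part plus a multiple of $h_{\sigma^{-n}\omega}$, apply the exponential contraction of Proposition~\ref{prop} on $W_0^{\ell,1}$ together with equivariance, and absorb the backward-orbit factors via Proposition~\ref{temp}. The only difference is that you justify the temperedness of $\omega\mapsto\|h_{\omega,0}\|_{W^{\ell,1}}$ (via the Oseledets projections), whereas the paper simply asserts it at this point.
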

	\begin{proof}
		Writing $K$ instead of $K^0$ and $\lambda$ instead of $\lambda (0)$, it follows from Propositions~\ref{temp} and~\ref{prop} that 
		\[
		\begin{split}
			\| \L_{\sigma^{-n} \omega}^n f\|_{W^{\ell, 1}} &=\bigg \| \L_{\sigma^{-n} \omega}^n \bigg (f- \bigg (\int_{\mathbb S^1} f\, dm \bigg )h_{\sigma^{-n} \omega, 0}+\bigg (\int_{\mathbb S^1} f\, dm \bigg )h_{\sigma^{-n} \omega, 0}\bigg )\bigg \|_{W^{\ell, 1}} \\
			&\le K(\sigma^{-n} \omega)e^{-\lambda n}\bigg \|f- \bigg (\int_{\mathbb S^1} f\, dm \bigg )h_{\sigma^{-n} \omega, 0}\bigg \|_{W^{\ell, 1}}+ \|f\|_{W^{\ell, 1}} \cdot \|h_{\omega, 0} \|_{W^{\ell, 1}} \\
			&\le K_{\lambda /2}(\omega) e^{-\frac{\lambda}{2}n} (1+ \|h_{\sigma^{-n} \omega, 0} \|_{W^{\ell, 1}})\|f\|_{W^{\ell, 1}}+ \|f\|_{W^{\ell, 1}} \cdot \|h_{\omega, 0} \|_{W^{\ell, 1}}.
		\end{split}
		\]
		Since $\omega \mapsto \|h_{\omega, 0} \|_{W^{\ell, 1}}$ is tempered, by Proposition~\ref{temp} there exists $Q\in \mathcal K$ such that 
		\[
		\|h_{\omega, 0}\|_{W^{\ell, 1}}\le Q(\omega), \quad Q(\sigma^n \omega) \le e^{\frac{\lambda}{2}|n|}Q(\omega), 
		\]
		for $\mathbb P$-a.e. $\omega \in \Omega$ and $n\in \Z$. We conclude that~\eqref{2220} holds with
		\[
		D_\ell (\omega)=K_{\lambda/2}(\omega)+K_{\lambda/2}(\omega)Q(\omega)+Q(\omega).
		\]
	\end{proof}

	\begin{lemma}\label{aux2}
		For each  $1\le \ell \le r-1$, 
		there exists $P_\ell\colon \Omega \to (0, \infty)$ measurable such that 
		\begin{equation}\label{Pl}
			\| \L_{\sigma^{-n} \omega, \epsilon}^n 1\|_{W^{\ell, 1}}\le P_\ell (\omega),
		\end{equation}
		for $\epsilon \in I$, $\mathbb P$-a.e. $\omega \in \Omega$ and $n\in \N$.
	\end{lemma}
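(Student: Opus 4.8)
The plan is to argue by induction on $\ell$, exploiting that the constants $B_\ell,\underline\lambda$ in the weak Lasota--Yorke inequality~\eqref{eq:LYS} of Lemma~\ref{lemma:weakLY} do \emph{not} depend on $\epsilon$; this is precisely what will allow a single $P_\ell$ to work for all $\epsilon\in I$. The recursion starts at $\ell=0$: since transfer operators preserve the integral and $\L^n_{\sigma^{-n}\omega,\epsilon}1\ge 0$, one has $\|\L^n_{\sigma^{-n}\omega,\epsilon}1\|_{L^1}=\int_{\mathbb S^1}\L^n_{\sigma^{-n}\omega,\epsilon}1\,dm$ is constant, so $P_0$ may be taken constant. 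For the inductive step I would realize $\L^n_{\sigma^{-n}\omega,\epsilon}1$ as the last term of the pullback chain $w_0:=1$, $w_m:=\L_{\sigma^{-(n-m+1)}\omega,\epsilon}w_{m-1}$ for $1\le m\le n$, so that $w_n=\L^n_{\sigma^{-n}\omega,\epsilon}1$ and, by the cocycle property, $w_m=\L^m_{\sigma^{-n}\omega,\epsilon}1=\L^m_{\sigma^{-m}(\sigma^{m-n}\omega),\epsilon}1$. Applying~\eqref{eq:LYS} at every link of the chain and telescoping the resulting recursion $\|w_m\|_{W^{\ell,1}}\le \underline\lambda(\sigma^{-(n-m+1)}\omega)^{-\ell}\|w_{m-1}\|_{W^{\ell,1}}+B_\ell(\sigma^{-(n-m+1)}\omega)\|w_{m-1}\|_{W^{\ell-1,1}}$ yields, after the index change $p=n-m+1$,
\[
\|\L^n_{\sigma^{-n}\omega,\epsilon}1\|_{W^{\ell,1}}\le \frac{1}{\big(\prod_{i=1}^n\underline\lambda(\sigma^{-i}\omega)\big)^{\ell}}+\sum_{p=1}^{n}\frac{B_\ell(\sigma^{-p}\omega)}{\big(\prod_{i=1}^{p-1}\underline\lambda(\sigma^{-i}\omega)\big)^{\ell}}\,\|\L^{n-p}_{\sigma^{-n}\omega,\epsilon}1\|_{W^{\ell-1,1}}.
\]
By the cocycle property once more, $\L^{n-p}_{\sigma^{-n}\omega,\epsilon}1=\L^{n-p}_{\sigma^{-(n-p)}(\sigma^{-p}\omega),\epsilon}1$, so the inductive hypothesis bounds $\|\L^{n-p}_{\sigma^{-n}\omega,\epsilon}1\|_{W^{\ell-1,1}}$ by $P_{\ell-1}(\sigma^{-p}\omega)$.

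It therefore remains to check that, writing $S_m(\omega):=\sum_{i=1}^m\log\underline\lambda(\sigma^{-i}\omega)$,
\[
P_\ell(\omega):=\sup_{n\ge 0}e^{-\ell S_n(\omega)}+\sum_{p=1}^{\infty}e^{-\ell S_{p-1}(\omega)}B_\ell(\sigma^{-p}\omega)P_{\ell-1}(\sigma^{-p}\omega)
\]
is finite and measurable for $\mathbb P$-a.e.\ $\omega$. Measurability is immediate, being a countable combination of measurable functions. For finiteness, recall that $\underline\lambda$ is log-integrable with $\bar\lambda:=\int_\Omega\log\underline\lambda\,d\mathbb P>0$ by~\eqref{lambda}; Birkhoff's theorem applied to $\log\underline\lambda$ and the (ergodic, measure-preserving) map $\sigma^{-1}$ gives, for every $\delta>0$ and a.e.\ $\omega$, some $N(\omega)$ with $S_m(\omega)\ge(\bar\lambda-\delta)m$ for all $m\ge N(\omega)$. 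Hence the first term equals $e^{\ell M(\omega)}$ with $M(\omega):=\sup_{n\ge 0}(-S_n(\omega))<\infty$ a.e., and in the series the factor $e^{-\ell S_{p-1}(\omega)}$ decays genuinely geometrically (at rate $\ell\bar\lambda$) once $p-1\ge N(\omega)$, the finitely many earlier terms being harmless. Since $B_\ell$ is log-integrable hence tempered and, by the induction, $P_{\ell-1}\in\mathcal K$, the product $B_\ell(\sigma^{-p}\omega)P_{\ell-1}(\sigma^{-p}\omega)$ grows only sub-exponentially in $p$, so the series converges for a.e.\ $\omega$. One then verifies that $P_\ell\in\mathcal K$ as well: $e^{\ell M}\in\mathcal K$ because $M(\sigma^{-m}\omega)=\sup_{p\ge m}\big(S_m(\omega)-S_p(\omega)\big)=o(m)$ a.e.\ (the running maximum of the Birkhoff sums of the mean-negative function $-\log\underline\lambda$ grows sublinearly, by the same Birkhoff estimate), and the series term inherits temperedness from the very same bounds. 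This closes the induction, and the resulting $P_\ell$, depending only on $\underline\lambda$ and $B_\ell$, works simultaneously for every $\epsilon\in I$.

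The part I expect to require the most care is this last bookkeeping: ensuring that the random constant produced by the telescoping can be chosen tempered, so that the induction feeds back on itself. Concretely, this reduces to the sublinear growth along $\sigma$-orbits of $M(\omega)=\sup_{n\ge 0}(-S_n(\omega))$, i.e.\ of the all-time maximum of a Birkhoff-sum cocycle with strictly negative mean — which is exactly where the expanding-on-average hypothesis~\eqref{lambda} enters. The rest (the cocycle identities for the pullbacks, the telescoping of~\eqref{eq:LYS}, and the automatic $\epsilon$-uniformity coming from the $\epsilon$-free Lasota--Yorke constants of Lemma~\ref{lemma:weakLY}) is routine.
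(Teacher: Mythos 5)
Your proof is correct, and it reaches the bound by the same basic mechanism as the paper: iterate the $\epsilon$-uniform Lasota--Yorke inequality \eqref{eq:LYS} backward along the orbit of $\omega$ and use Birkhoff's theorem (via \eqref{lambda}) to show that the resulting series of products of $\underline{\lambda}^{-\ell}$ converges almost surely to a measurable, $\epsilon$-independent quantity. The structural difference lies in how the lower-order norm $\|\cdot\|_{W^{\ell-1,1}}$ is disposed of. The paper first invokes the proof of \cite[Lemma C.5]{GTQ} to replace \eqref{eq:LYS} by an inequality $\|\L_{\omega,\epsilon}h\|_{W^{\ell,1}}\le\alpha(\omega)\|h\|_{W^{\ell,1}}+D\|h\|_{W^{\ell-1,1}}$ with $\int_\Omega\log\alpha\,d\mathbb P<0$ and $D$ a deterministic constant, and only then telescopes; you instead run an explicit induction on $\ell$, starting from $\|\L^n_{\sigma^{-n}\omega,\epsilon}1\|_{L^1}=1$ and feeding the inductive bound $P_{\ell-1}(\sigma^{-p}\omega)$ for the $W^{\ell-1,1}$-norms of the intermediate iterates back into the telescoped sum. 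Your route is more self-contained (no appeal to the GTQ machinery), but it obliges you to prove the additional fact that each $P_\ell$ is tempered so that the induction closes --- which you correctly identify as the delicate point and handle through the sublinear growth of the running maximum of the Birkhoff sums of $-\log\underline{\lambda}$; the paper's normalization makes this feedback unnecessary because its weak-norm coefficient is already constant. Both arguments obtain the $\epsilon$-uniformity of $P_\ell$ for the same reason, namely that none of $\underline{\lambda}$, $B_\ell$ (respectively $\alpha$, $D$) depends on $\epsilon$.
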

	
	\begin{proof}
		Fix an arbitrary $1\le \ell \le r-1$. It follows from Lemma~\ref{lemma:weakLY} and the proof of~\cite[Lemma C.5]{GTQ} that  there exist $\alpha\colon \Omega \to (0, \infty)$ measurable and $D>0$\footnote{We drop the $\ell$ dependency in both $\alpha, D$ to alleviate the notation.} such that $\int_\Omega \log \alpha \, d\mathbb P<0$ and
		\begin{equation}\label{910}
			\| \L_{\omega, \epsilon} h\|_{W^{\ell,1}} \le \alpha (\omega)\|h \|_{W^{\ell,1}}+D \|h\|_{W^{\ell-1, 1}},
		\end{equation}
		for $\epsilon \in I$, $\mathbb P$-a.e. $\omega \in \Omega$ and $h\in W^{\ell, 1}$.
		By iterating~\eqref{910}, we have that 
		\begin{equation}\label{510}
			\| \L_{\sigma^{-n} \omega, \epsilon}^n 1\|_{W^{\ell,1}} \le D \bigg (1+\sum_{j=1}^n \alpha^{(j)}(\sigma^{-j} \omega) \bigg ),
		\end{equation}
		for $\epsilon \in I$, $\mathbb P$-a.e. $\omega \in \Omega$ and $n\in \N$,
		where
		\[
		\alpha^{(j)}(\sigma^{-j} \omega):=\alpha (\sigma^{-1} \omega) \cdots \alpha (\sigma^{-j} \omega).
		\]
		Note that 
		\[
		\lim_{j\to \infty} \frac 1 j \log \alpha^{(j)}(\sigma^{-j} \omega)=\lim_{j\to \infty} \frac 1 j \sum_{t=1}^j \log \alpha (\sigma^{-t} \omega) = \int_\Omega \log \alpha \, d\mathbb P<0, \quad \ \text{for $\mathbb P$-a.e. $\omega \in \Omega$.}
		\]
		Hence, there exists $\delta>0$ such that for $\mathbb P$-a.e. $\omega \in \Omega$, $\alpha^{(j)}(\sigma^{-j} \omega) \le e^{-\delta j}$ for $j\ge j(\omega)$. Set
		\[
		\bar C(\omega)=\sup_j (\alpha^{(j)}(\sigma^{-j} \omega) e^{\delta j}), \quad \omega \in \Omega.
		\]
		Now it follows readily from~\eqref{510} that~\eqref{Pl} holds with
		\[
		P_\ell(\omega):=D\bigg (1+\bar C(\omega) \frac{e^{-\delta}}{1-e^{-\delta}} \bigg), \quad \omega \in \Omega.
		\]
		
	\end{proof}
	
	\begin{lemma}\label{LL}
		For each $1\le \ell \le r-1$, we have that 
		\[
		\| h_{\omega, \epsilon}\|_{W^{\ell, 1}} \le P_\ell(\omega),
		\]
		for $\epsilon \in I$ and $\mathbb P$-a.e. $\omega \in \Omega$, where $P_\ell$ is given by Lemma~\ref{aux2}.
	\end{lemma}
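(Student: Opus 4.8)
The plan is to exploit the equivariance relation for $h_{\omega,\epsilon}$ together with the uniform (in $\epsilon$) bound obtained in Lemma \ref{aux2}. The key observation is that the constant function $1 \in W^{\ell,1}$ satisfies $\int_{\mathbb S^1} 1\, dm = 1$, and that the transfer operator cocycle $(\L_{\omega,\epsilon})_{\omega}$ has top Oseledets space spanned by $h_{\omega,\epsilon}$ with the complement $W_0^{\ell,1}$ enjoying exponential decay. Thus, pulling $h_{\omega,\epsilon}$ back from time $0$ to time $-n$ via equivariance, $\L_{\sigma^{-n}\omega,\epsilon}^n h_{\sigma^{-n}\omega,\epsilon} = h_{\omega,\epsilon}$, and writing $1 = h_{\sigma^{-n}\omega,\epsilon} + (1 - h_{\sigma^{-n}\omega,\epsilon})$ with $1 - h_{\sigma^{-n}\omega,\epsilon} \in W_0^{1,1}$ (since both integrate to $1$), we get
\[
\L_{\sigma^{-n}\omega,\epsilon}^n 1 = h_{\omega,\epsilon} + \L_{\sigma^{-n}\omega,\epsilon}^n(1 - h_{\sigma^{-n}\omega,\epsilon}).
\]
By Lemma \ref{1158} (applied on $W^{1,1}$) the last term tends to $0$ in $W^{1,1}$ as $n\to\infty$, for $\mathbb P$-a.e.\ $\omega$. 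Hence $\L_{\sigma^{-n}\omega,\epsilon}^n 1 \to h_{\omega,\epsilon}$ in $W^{1,1}$ along this sequence.

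Next I would upgrade the convergence to $W^{\ell,1}$, or rather argue directly that the $W^{\ell,1}$-bound passes to the limit. By Lemma \ref{aux2}, $\| \L_{\sigma^{-n}\omega,\epsilon}^n 1 \|_{W^{\ell,1}} \le P_\ell(\omega)$ uniformly in $n$ (and in $\epsilon \in I$). Since the sequence $(\L_{\sigma^{-n}\omega,\epsilon}^n 1)_n$ is bounded in $W^{\ell,1}$ and converges in $W^{1,1}$ (hence in particular in $L^1$) to $h_{\omega,\epsilon}$, and since bounded sets of $W^{\ell,1}$ are weakly sequentially precompact (as $W^{\ell,1}$ is a closed subspace of a product of $L^1$ spaces, one can extract a subsequence along which each derivative up to order $\ell$ converges weakly in $L^1$), the weak limit must coincide with $h_{\omega,\epsilon}$; in particular $h_{\omega,\epsilon} \in W^{\ell,1}$ and, by weak lower semicontinuity of the norm,
\[
\| h_{\omega,\epsilon} \|_{W^{\ell,1}} \le \liminf_{n\to\infty} \| \L_{\sigma^{-n}\omega,\epsilon}^n 1 \|_{W^{\ell,1}} \le P_\ell(\omega),
\]
for $\mathbb P$-a.e.\ $\omega \in \Omega$ and every $\epsilon \in I$. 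Alternatively, one can avoid weak compactness entirely: the $W^{\ell,1}$ derivatives $(\L_{\sigma^{-n}\omega,\epsilon}^n 1)^{(j)}$ are bounded in $L^1$ and Cauchy in the distributional sense (being derivatives of an $L^1$-convergent sequence), so they converge in $L^1$ along a subsequence by the Dunford--Pettis / Banach--Alaoglu argument, and the limit identifies the weak derivatives of $h_{\omega,\epsilon}$.

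The main obstacle is the last identification step: one must be careful that the family $(h_{\omega,\epsilon})$ produced by Lemma \ref{1158}, a priori only living in $W^{1,1}$, genuinely lies in $W^{\ell,1}$ with the stated bound, rather than merely that the approximants do. This is where the compact-embedding/weak-compactness argument (or the closedness of the weak-derivative operator) is needed, and it must be run for a.e.\ $\omega$ with the exceptional null set chosen independently of $n$ — which is fine since $P_\ell$ and the $W^{1,1}$-convergence each hold off a fixed null set. Everything else is routine: the equivariance rewriting is exact, and the uniform bound in $\epsilon$ is already built into $P_\ell$ from Lemma \ref{aux2}.
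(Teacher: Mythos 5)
Your core idea is the paper's: identify $h_{\omega,\epsilon}$ as the limit of $\L_{\sigma^{-n}\omega,\epsilon}^n 1$ and pass the uniform bound of Lemma~\ref{aux2} to the limit. However, two steps in your write-up do not work as stated. First, you justify the convergence $\L_{\sigma^{-n}\omega,\epsilon}^n(1-h_{\sigma^{-n}\omega,\epsilon})\to 0$ by citing Lemma~\ref{1158}, but that lemma only gives a forward Lyapunov-exponent estimate $\limsup_n \frac1n\log\|\L_{\omega,\epsilon}^n f\|_{W^{1,1}}\le\log\rho(\epsilon)$ for a \emph{fixed} fiber $\omega$ and a \emph{fixed} zero-mean $f$. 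Here both the starting fiber $\sigma^{-n}\omega$ and the function $1-h_{\sigma^{-n}\omega,\epsilon}$ vary with $n$, so you need a quantitative contraction with controlled constants: Proposition~\ref{prop} gives $\|\L_{\sigma^{-n}\omega,\epsilon}^n g\|\le K^\epsilon(\sigma^{-n}\omega)e^{-\lambda(\epsilon)n}\|g\|$ with $K^\epsilon$ tempered, and one must additionally invoke temperedness of $\omega\mapsto\|h_{\omega,\epsilon}\|$ (via Proposition~\ref{temp}) to control $\|1-h_{\sigma^{-n}\omega,\epsilon}\|$ along the backward orbit. Without both ingredients the product $K^\epsilon(\sigma^{-n}\omega)\,\|1-h_{\sigma^{-n}\omega,\epsilon}\|$ could a priori swamp the factor $e^{-\lambda(\epsilon) n}$.

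Second, your compactness upgrade is shakier than you suggest: a sequence bounded in $L^1(\mathbb S^1)$ is \emph{not} weakly sequentially precompact in $L^1$ (Dunford--Pettis requires uniform integrability, which you have not established for the top derivatives $(\L_{\sigma^{-n}\omega,\epsilon}^n 1)^{(\ell)}$), so the most you get for the $\ell$-th derivative is weak-$*$ convergence in the space of measures; the limit is then only known to be a measure of total variation at most $P_\ell(\omega)$, not an $L^1$ function. This can be repaired because the paper has already shown (via the Oseledets-space proposition following Lemma~\ref{aux}) that $h_{\omega,\epsilon}\in W^{\ell,1}$, so the limiting measure is absolutely continuous and lower semicontinuity of total variation yields the bound --- but the paper avoids this machinery entirely by running the contraction estimate of Proposition~\ref{prop} directly at level $\ell$, obtaining \emph{norm} convergence $\L_{\sigma^{-n}\omega,\epsilon}^n1\to h_{\omega,\epsilon}$ in $W^{\ell,1}$, after which the bound is immediate. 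I would recommend adopting that route.
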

	
	\begin{proof}
		Take an arbitrary $\epsilon \in I$. We claim that 
		\begin{equation}\label{h}
			h_{\omega, \epsilon}=\lim_{n\to \infty} \L_{\sigma^{-n} \omega, \epsilon}^n 1, \quad \text{for $\mathbb P$-a.e. $\omega \in \Omega$.}
		\end{equation}
		Once we establish~\eqref{h}, the conclusion of the lemma follows directly from Lemma~\ref{aux2}. Writing $K$ and $\lambda$ instead of $K^\epsilon$ and $\lambda (\epsilon)$ respectively, it follows from Proposition~\ref{prop} that 
		\begin{equation}\label{802}
			\begin{split}
				\|h_{\omega, \epsilon}-\L_{\sigma^{-n} \omega, \epsilon}^n 1\|_{W^{\ell, 1}} &=\|\L_{\sigma^{-n} \omega, \epsilon}^n(h_{\sigma^{-n} \omega, \epsilon}-1)\|_{W^{\ell, 1}} \\
				&\le K(\sigma^{-n}\omega)e^{-\lambda n} \|h_{\sigma^{-n} \omega, \epsilon}-1\|_{W^{\ell, 1}} \\
				&\le K_{\lambda/2}(\omega) e^{-\frac{\lambda}{2} n}\|h_{\sigma^{-n} \omega, \epsilon}-1\|_{W^{\ell, 1}},
			\end{split}
		\end{equation}
		for $\mathbb P$-a.e. $\omega \in \Omega$ and $n\in \N$.
		
		On the other hand, 
		since $\omega \mapsto \|h_{\omega, \epsilon}\|_{W^{\ell, 1}}$ is tempered, there exists $C\in \mathcal K$ (depending on $\epsilon$) such that 
		\begin{equation}\label{806}
			\|h_{\omega, \epsilon}-1\|_{W^{\ell, 1}} \le C(\omega) \quad \text{and} \quad C(\sigma^n \omega) \le C(\omega)e^{\frac{\lambda}{4} |n|}, 
		\end{equation}
		for $\mathbb P$-a.e. $\omega \in \Omega$ and $n\in \Z$. By~\eqref{802} and~\eqref{806}, we have that 
		\begin{equation}\label{h1}
			\|h_{\omega, \epsilon}-\L_{\sigma^{-n} \omega, \epsilon}^n 1\|_{W^{\ell, 1}} \le C(\omega)K_{\lambda/2}(\omega) e^{-\frac{\lambda}{4} n},
		\end{equation}
		for $\mathbb P$-a.e. $\omega \in \Omega$ and $n\in \N$. Letting $n\to \infty$ in~\eqref{h1} yields~\eqref{h}. The proof of the lemma is completed.
	\end{proof}

	\begin{proposition}
		Take an arbitrary $1\le \ell \le r-1$. Then, for each $s>0$ there exists a measurable $C\colon \Omega \to (0, \infty)$ such that 
		\begin{equation}\label{bid}
			\|h_{\sigma^n \omega, \epsilon}\|_{W^{\ell, 1}} \le C(\omega)e^{s |n|}, 
		\end{equation}
		for $\epsilon \in I$, $\mathbb P$-a.e. $\omega \in \Omega$ and $n\in \Z$.
	\end{proposition}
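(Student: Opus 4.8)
The plan is to deduce the bound \eqref{bid} from the uniform (in $\epsilon$) bound $\|h_{\omega,\epsilon}\|_{W^{\ell,1}}\le P_\ell(\omega)$ established in Lemma~\ref{LL}, combined with temperedness considerations along the orbit of $\sigma$. The key point is that, although Lemma~\ref{LL} already controls $\|h_{\omega,\epsilon}\|_{W^{\ell,1}}$ by a single random variable $P_\ell$ that does \emph{not} depend on $\epsilon$, the random variable $P_\ell$ need not a priori be tempered (it was only shown to be measurable and $(0,\infty)$-valued). To upgrade it, I would first show that $P_\ell$ is tempered, i.e. $P_\ell\in\mathcal K$, and then invoke Proposition~\ref{temp} to produce, for any prescribed $s>0$, a dominating random variable with subexponential growth along the orbit.

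In detail, the steps are as follows. First, revisit the construction of $P_\ell$ in the proof of Lemma~\ref{aux2}: there $P_\ell(\omega)=D(1+\bar C(\omega)\tfrac{e^{-\delta}}{1-e^{-\delta}})$ with $\bar C(\omega)=\sup_j(\alpha^{(j)}(\sigma^{-j}\omega)e^{\delta j})$, where $\int_\Omega\log\alpha\,d\mathbb P<0$ and $D>0$ is deterministic. Since $\bar C$ is exactly the random variable produced by Proposition~\ref{temp} (applied to the backward Birkhoff sums of $\log\alpha$), it is tempered; hence $P_\ell=D(1+\bar C\cdot\text{const})\in\mathcal K$ by the closure properties recorded in Remark~\ref{TR}. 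Second, apply Proposition~\ref{temp} to $K:=P_\ell$ with $a:=s$: this yields $P_{\ell,s}\in\mathcal K$ with $P_\ell(\omega)\le P_{\ell,s}(\omega)$ and $P_{\ell,s}(\sigma^n\omega)\le P_{\ell,s}(\omega)e^{s|n|}$ for $\mathbb P$-a.e.\ $\omega$ and all $n\in\Z$. Third, set $C(\omega):=P_{\ell,s}(\omega)$ and combine: for $\mathbb P$-a.e.\ $\omega$ and all $n\in\Z$ and $\epsilon\in I$,
\[
\|h_{\sigma^n\omega,\epsilon}\|_{W^{\ell,1}}\le P_\ell(\sigma^n\omega)\le P_{\ell,s}(\sigma^n\omega)\le P_{\ell,s}(\omega)e^{s|n|}=C(\omega)e^{s|n|},
\]
which is precisely \eqref{bid}. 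A minor technical care is needed regarding the quantifier order in ``$\mathbb P$-a.e.\ $\omega$'': for each fixed $\epsilon$, Lemma~\ref{LL} holds on a full-measure set $\Omega_\epsilon$, and the estimate on $P_{\ell,s}$ holds on a full-measure set independent of $\epsilon$; the statement of the proposition is to be read with the same convention as in the footnote to Lemma~\ref{lemma:weakLY}, so no intersection over $\epsilon$ is required.

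The main obstacle, such as it is, is purely bookkeeping: one must make sure that the dominating random variable $P_\ell$ from Lemma~\ref{aux2} really is tempered rather than merely measurable, which requires unwinding its explicit formula and recognizing $\bar C$ as an instance of the $K_a$-construction of Proposition~\ref{temp}. Once that identification is made, the proposition is an immediate application of Proposition~\ref{temp} and the elementary stability properties of $\mathcal K$ from Remark~\ref{TR}; there is no further analytic content.
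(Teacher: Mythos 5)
Your overall strategy --- show that the $\epsilon$-uniform bound $P_\ell$ of Lemma~\ref{aux2} is tempered and then invoke Proposition~\ref{temp} with $a=s$ --- is sound and genuinely different from the paper's argument, which instead follows Buzzi's density argument (Birkhoff's theorem applied to $\log C_\ell$, a set $\Omega_2$ of large measure on which the Birkhoff sums are controlled, a bound $P_\ell\le A$ off a small set, and a separate treatment of the finitely many times $|n|<N_0(\omega)$) and never needs to know that $P_\ell$ itself is tempered. However, the single step on which your whole proof rests is justified incorrectly. You claim that $\bar C(\omega)=\sup_j\bigl(\alpha^{(j)}(\sigma^{-j}\omega)e^{\delta j}\bigr)$ ``is exactly the random variable produced by Proposition~\ref{temp}''. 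It is not: Proposition~\ref{temp} produces $K_a(\omega)=\sup_{n\in\Z}\bigl(K(\sigma^n\omega)e^{-a|n|}\bigr)$ for a single, \emph{already tempered} random variable $K$ evaluated along the orbit, whereas $\alpha^{(j)}(\sigma^{-j}\omega)=\alpha(\sigma^{-1}\omega)\cdots\alpha(\sigma^{-j}\omega)$ is a Birkhoff \emph{product} whose value at time $-j$ depends on the whole orbit block; it cannot be written as $K(\sigma^{-j}\omega)$ for any fixed $K$ (comparing $j=1$ and $j=2$ already gives a contradiction). Moreover, even if such an identification were available, Proposition~\ref{temp} only yields temperedness of $K_a$ under the hypothesis that $K$ is tempered, so you would still owe an argument.

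The claim itself is true and your proof is reparable: writing $\log\bar C(\sigma^n\omega)=\sup_{j\ge1}\sum_{t=1}^{j}\bigl(\log\alpha(\sigma^{n-t}\omega)+\delta\bigr)$, an application of Birkhoff's ergodic theorem to $\log\alpha$ along both the forward and the backward orbit (using $\int_\Omega\log\alpha\,d\mathbb P<-\delta$ together with the integrability of $\log\alpha$ coming from the construction in Lemma~\ref{aux2}) shows that this running maximum of an ergodic sum with negative drift is $o(|n|)$ as $n\to\pm\infty$, i.e. $\bar C$, and hence $P_\ell$, belongs to $\mathcal K$. Note that the shortcut ``$\log P_\ell\in L^1$ hence tempered'' from Remark~\ref{TR} is not available here, since integrability of the supremum of a negative-drift walk is not automatic. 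Once temperedness of $P_\ell$ is actually established, the remainder of your argument (the closure properties of $\mathcal K$, Proposition~\ref{temp} with $a=s$, and the quantifier convention of the footnote to Lemma~\ref{lemma:weakLY}) is correct and arguably gives a cleaner route to \eqref{bid} than the paper's; but as written, the temperedness of $P_\ell$ is asserted rather than proved, and that is precisely the nontrivial content of the proposition.
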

	
	\begin{proof}
		We follow closely the proof of~\cite[Lemma 4.2]{Buzzi}. 
		Since $C_\ell$ in~\eqref{eq:LY} is log-integrable, we have that $\sigma_0:=\int_{\Omega} \log C_\ell (\omega) \, d\mathbb P(\omega)$ is finite. Without any loss of generality, we may assume that 
		$\frac{s}{2(\sigma_0+1)} <1$. By applying Birkhoff's ergodic theorem, we have that there exists $\Omega_2 \subset \Omega$, $\mathbb P(\Omega_2)>1-\frac{s}{4(\sigma_0+1)}$ such that for $\omega \in \Omega_2$, sufficiently large $n$ and each choice of the sign $\pm$ we have that 
		\[
		\sum_{0\le j<n} \log C_{\ell} (\sigma^{\pm j} \omega) \le (\sigma_0+1)n.
		\]
		Let $P_\ell$ be given by Lemma~\ref{aux2}. Note that there  exists $A>1$ such that $\mathbb P(\{ \omega \in \Omega: P_\ell (\omega)>A\})<\frac{s}{4(\sigma_0+1)}$. By applying Birkhoff's ergodic theorem once again, we have that for $\mathbb P$-a.e. $\omega \in \Omega$ and each choice of $\pm$, 
		\[
		\lim_{k\to \pm \infty}\frac{1}{|k|} \# \{0\le j<|k|: P_\ell (\omega) >A \ \text{or} \ \sigma^{\pm j}\omega \notin \Omega_2 \} <\frac{s}{2(\sigma_0+1)}<1.
		\]
		By using~\eqref{eq:LY}, Lemma~\ref{LL} and arguing exactly as in the proof of~\cite[Lemma 4.2]{Buzzi},  we find that there exists $N_0\colon \Omega \to \N$ such that for each $\epsilon \in I$ and $\mathbb P$-a.e. $\omega \in \Omega$,
		\[
		\|h_{\sigma^n \omega, \epsilon}\|_{W^{\ell, 1}}  \le Ae^{s|n|} \quad \text{for $n\in \Z$, $|n|\ge  N_0(\omega)$.}
		\]
		By writing $h_{\sigma^n \omega, \epsilon}=\L_{\sigma^{-N_0(\omega)} \omega}^{n+N_0(\omega)}h_{\sigma^{-N_0(\omega)} \omega, \epsilon}$ for $|n|< N_0(\omega)$ and
		invoking~\eqref{eq:LY} again, we  conclude that~\eqref{bid} holds with 
		\[
		C(\omega)=Ae^{2sN_0(\omega)} \prod_{i=-N_0(\omega)}^{N_0(\omega)-2}\max \{C_\ell (\sigma^i \omega), 1 \}.
		\]
		The proof of the proposition is completed. 
	\end{proof}
	
	As a direct consequence of the previous result, we obtain the following:
	\begin{cor}\label{KORO}
		Let $(\epsilon_k)_{k\in \N}$ be an arbitrary sequence in $I$. Then, for each $1\le \ell \le r-1$, $\omega \mapsto \sup_k \|h_{\omega, \epsilon_k}\|_{W^{\ell, 1}}$ is a tempered random variable. 
	\end{cor}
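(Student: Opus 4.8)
\emph{Proof idea.} The plan is to derive the statement entirely from the preceding Proposition, whose conclusion \eqref{bid} is crucially \emph{uniform in} $\epsilon$: the random constant $C$ produced there depends only on $s$ and $\ell$, not on $\epsilon$. Set
\[
H_\ell(\omega):=\sup_{k\in\N}\|h_{\omega,\epsilon_k}\|_{W^{\ell,1}}.
\]
First I would record that $H_\ell$ is measurable and a.e.\ finite and strictly positive. Each map $\omega\mapsto h_{\omega,\epsilon_k}$ is measurable as a map into $W^{\ell,1}$ (by the Proposition identifying $Y_1^{\ell,\epsilon_k}(\omega)$ with $\operatorname{span}\{h_{\omega,\epsilon_k}\}$, together with the uniqueness in Lemma~\ref{1158}), hence each $\omega\mapsto\|h_{\omega,\epsilon_k}\|_{W^{\ell,1}}$ is measurable, and $H_\ell$, being a countable supremum, is measurable; positivity follows since $h_{\omega,\epsilon_k}\ge 0$ and $\int_{\mathbb S^1}h_{\omega,\epsilon_k}\,dm=1$ force $\|h_{\omega,\epsilon_k}\|_{W^{\ell,1}}\ge\|h_{\omega,\epsilon_k}\|_{L^1}=1$; finiteness will follow from the estimate below. (On the exceptional null set one redefines $H_\ell$ to equal $1$, so that $H_\ell\colon\Omega\to(0,\infty)$.)

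Next, fix $s>0$ and let $C=C_{s,\ell}$ be the random variable given by the preceding Proposition. For each $k\in\N$ there is, by \eqref{bid}, a full-measure set $\Omega_{s,k}\subset\Omega$ on which $\|h_{\sigma^n\omega,\epsilon_k}\|_{W^{\ell,1}}\le C(\omega)e^{s|n|}$ for every $n\in\Z$. Put $\Omega_s^*:=\bigcap_{k\in\N}\Omega_{s,k}$, which still has full measure; for $\omega\in\Omega_s^*$ and $n\in\Z$, taking the supremum over $k$ gives $H_\ell(\sigma^n\omega)\le C(\omega)e^{s|n|}$. Taking $n=0$ yields $H_\ell(\omega)\le C(\omega)<\infty$ on $\Omega_s^*$, so $H_\ell$ is a.e.\ finite; and since $H_\ell(\sigma^n\omega)\ge 1$, we obtain, for $\omega\in\Omega_s^*$,
\[
0\le\limsup_{n\to\pm\infty}\frac{1}{|n|}\log H_\ell(\sigma^n\omega)\le s .
\]

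Finally, I would remove the dependence of the exceptional set on $s$ by a diagonal argument: apply the above with $s=1/m$, $m\in\N$, and set $\Omega_\infty:=\bigcap_{m\in\N}\Omega_{1/m}^*$, a set of full measure. For $\omega\in\Omega_\infty$ the displayed inequality holds with every $s=1/m$, hence $\limsup_{n\to\pm\infty}\frac{1}{|n|}\log H_\ell(\sigma^n\omega)=0$, and therefore $\lim_{n\to\pm\infty}\frac1n\log H_\ell(\sigma^n\omega)=0$; thus $H_\ell\in\mathcal K$. There is no genuine obstacle here beyond bookkeeping; the only point requiring care — and the reason the statement holds at all — is that the constant $C$ in \eqref{bid} is independent of $\epsilon$, which is precisely what allows passing to the supremum over the countable family $(\epsilon_k)_{k\in\N}$ before invoking temperedness.
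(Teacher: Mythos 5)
Your argument is correct and is exactly the deduction the paper intends: the corollary is stated there as a direct consequence of the preceding proposition, and your write-up simply makes explicit the two pieces of bookkeeping that justify this (intersecting the $\epsilon$-dependent full-measure sets over the countable family $(\epsilon_k)$, and diagonalizing over $s=1/m$ to get a single full-measure set on which the limit is zero). The lower bound $\|h_{\omega,\epsilon_k}\|_{W^{\ell,1}}\ge\|h_{\omega,\epsilon_k}\|_{L^1}=1$, which upgrades the $\limsup$ estimate to an actual limit, is the right observation and completes the verification of temperedness.
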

	
	\subsection{Estimates in the triple norm}
	Let $\mathbf T\colon \Omega \to C^r(I\times \mathbb S^1, \mathbb S^1)$ be an arbitrary parameterized smooth expanding on average cocycle. 
	We continue to use the same notation as in the previous subsection. 
	In addition, we will write 
	$T_{\omega}$ and $\L_\omega$ instead of $T_{\omega, 0}$ and $\L_{\omega, 0}$ respectively. 
	
	\begin{lemma}\label{LE1}
		There exists $Q_1\in \mathcal K$ such that 
		\[
		\|( \L_{\omega, \epsilon}-\L_\omega) f\|_{L^1} \le Q_1(\omega) d_{C^1}(T_{\omega, \epsilon}, T_\omega)\|f\|_{W^{1,1}},
		\]
		for $\epsilon \in I$, $\mathbb P$-a.e. $\omega \in \Omega$ and $f\in W^{1,1}$.
	\end{lemma}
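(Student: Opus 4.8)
The plan is to connect the two maps by a homotopy and differentiate the transfer operator along it. Since $\epsilon\mapsto T_{\omega,\epsilon}$ is continuous into $C^r(\mathbb S^1,\mathbb S^1)$ and $I$ is an interval containing $0$, every $T_{\omega,\epsilon}$ has the same degree as $T_\omega$; lifting to the universal cover, the straight-line homotopy $T_t:=(1-t)T_\omega+tT_{\omega,\epsilon}$, $t\in[0,1]$, descends to a path of $C^r$ maps of $\mathbb S^1$ joining $T_\omega$ to $T_{\omega,\epsilon}$. By convexity (and since $T_\omega'$, $T_{\omega,\epsilon}'$ share the same sign, being nonvanishing with the common degree), each $T_t$ satisfies $\min|T_t'|\ge\underline{\lambda}(\omega)$ and $\|T_t\|_{C^2}\le K(\omega)$, using \eqref{n1}; moreover $\partial_t T_t=T_{\omega,\epsilon}-T_\omega$, so $\|\partial_t T_t\|_{C^0}$ and $\|\partial_t T_t'\|_{C^0}$ are both controlled by $d_{C^1}(T_{\omega,\epsilon},T_\omega)$. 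For $f\in W^{1,1}$ the map $t\mapsto\L_{T_t}f$ is $C^1$ into $L^1$, so
\[
(\L_{\omega,\epsilon}-\L_\omega)f=\int_0^1\frac{d}{dt}\big(\L_{T_t}f\big)\,dt,
\]
and it suffices to bound $\|\frac{d}{dt}\L_{T_t}f\|_{L^1}$ uniformly in $t$.

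To this end, write $\L_{T_t}f(x)=\sum_i f(y_i(t,x))\,|\partial_x y_i(t,x)|$ in terms of the finitely many, smoothly varying inverse branches $y_i(t,\cdot)$ of $T_t$, and differentiate in $t$ using $\partial_t y_i=-(\partial_t T_t)(y_i)/T_t'(y_i)$. This exhibits $\frac{d}{dt}\L_{T_t}f$ as a finite sum of terms of the two shapes $\L_{T_t}(a\cdot f')$ and $\L_{T_t}(b\cdot f)$, where $a$ and $b$ are built out of $\partial_t T_t$, $\partial_t T_t'$, $T_t'$ and $T_t''$; inspecting these shows that each carries \emph{exactly one} factor of $\partial_t T_t$ or $\partial_t T_t'$ and is otherwise a rational expression in $T_t'$ and $T_t''$, so that $\|a\|_\infty,\|b\|_\infty\le P\big(K(\omega),\underline{\lambda}(\omega)^{-1}\big)\,d_{C^1}(T_{\omega,\epsilon},T_\omega)$ for a suitable polynomial $P$ with nonnegative coefficients. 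Using that $\L_{T_t}$ is a weak contraction of $L^1$ (i.e. $\|\L_{T_t}g\|_{L^1}\le\|g\|_{L^1}$), we get
\[
\Big\|\frac{d}{dt}\L_{T_t}f\Big\|_{L^1}\le Q_1(\omega)\,d_{C^1}(T_{\omega,\epsilon},T_\omega)\,\|f\|_{W^{1,1}},\qquad Q_1(\omega):=P\big(K(\omega),\underline{\lambda}(\omega)^{-1}\big),
\]
uniformly in $t\in[0,1]$, and integrating over $t$ yields the asserted inequality.

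Finally, $Q_1\in\mathcal K$: the random variable $K$ is log-integrable (Definition~\ref{EC}), hence tempered, and $\underline{\lambda}\in\mathcal K$ (by the remark following Definition~\ref{EC}, using \eqref{lambda}), hence so is $\underline{\lambda}^{-1}$; since $\mathcal K$ is stable under sums and products (Remark~\ref{TR}), any polynomial with nonnegative coefficients in $K(\omega)$ and $\underline{\lambda}(\omega)^{-1}$ lies in $\mathcal K$, in particular $Q_1$. The main obstacle — indeed the only non-routine point — is the bookkeeping in the differentiation of $\L_{T_t}f$: one must carry out the standard but slightly tedious computation of $\frac{d}{dt}\L_{T_t}f$ carefully enough to check that every resulting term is \emph{linear} in the perturbation $\partial_t T_t$, while all remaining factors depend only on $K(\omega)$ and $\underline{\lambda}(\omega)^{-1}$. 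An equivalent route avoiding the homotopy is to pair the inverse branches $y_i^\epsilon$ of $T_{\omega,\epsilon}$ with those $y_i^0$ of $T_\omega$ directly, using $|y_i^\epsilon(x)-y_i^0(x)|\le\underline{\lambda}(\omega)^{-1}d_{C^0}(T_{\omega,\epsilon},T_\omega)$ together with $f(y_i^\epsilon(x))-f(y_i^0(x))=\int_{y_i^0(x)}^{y_i^\epsilon(x)}f'$ and a change of variables; this is the computation of \cite{GG,GS}, and the crux is identical.
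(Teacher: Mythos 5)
Your proof is correct, but it takes a genuinely different route from the paper's. You work on the ``strong'' side: you interpolate between the two maps by the convex homotopy of lifts (correctly noting that equality of degrees forces $T_\omega'$ and $T_{\omega,\epsilon}'$ to have the same sign, so that $\min|T_t'|\ge\underline{\lambda}(\omega)$ persists along the path), differentiate the inverse-branch formula for $\L_{T_t}f$ in $t$, observe that every resulting term carries exactly one factor of $\partial_t T_t$ or $\partial_t T_t'$ and is hence of size $O(d_{C^1}(T_{\omega,\epsilon},T_\omega))$, and close with the $L^1$-contraction property of transfer operators. The paper instead argues by duality: it pairs $(\L_{\omega,\epsilon}-\L_\omega)f$ against a test function $\varphi\in C^1$, introduces the primitive $\Phi_{\omega,\epsilon}(x)=\frac{1}{T_\omega'(x)}\int_{T_\omega x}^{T_{\omega,\epsilon}x}\varphi\,dm$, and integrates by parts to shift the single derivative onto $f$; the final bound depends only on $|\varphi|_\infty$, which yields the $L^1$ estimate. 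The two computations are essentially adjoint to one another (both cost one derivative of $f$, both are in the spirit of \cite{GG,GS}) and produce the same constant $Q_1(\omega)$, a polynomial in $K(\omega)$ and $\underline{\lambda}(\omega)^{-1}$, tempered for exactly the reasons you give. Your version has the advantage of making the ``linear in the perturbation'' structure transparent and of dispensing with the ad hoc primitive $\Phi_{\omega,\epsilon}$; the paper's version avoids any discussion of lifts, degrees and globally defined inverse branches. The one point you should make explicit is the justification of $\frac{d}{dt}\L_{T_t}f$ and of the fundamental theorem of calculus in $L^1$ when $f$ is merely in $W^{1,1}$: the pointwise chain rule $\partial_t[f(y_i(t,x))]=f'(y_i(t,x))\,\partial_t y_i(t,x)$ is not immediate for Sobolev $f$, so one should first prove the identity for $f\in C^1$ and then pass to the limit, using that both sides of the asserted inequality are continuous in $\|f\|_{W^{1,1}}$ --- a routine density step, but one that belongs in the writeup.
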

	\begin{proof}
		For every $f\in W^{1,1}$, $\varphi \in C^1(\mathbb S^1)$,  $\epsilon \in I$ and $\omega \in \Omega$, we have that 
		\[
		\int_{\mathbb S^1} (\mathcal L_{\omega, \epsilon}f-\mathcal L_\omega f)\varphi \, dm=\int_{\mathbb S^1} f(\varphi \circ T_{\omega, \epsilon}-\varphi \circ T_\omega)\, dm.
		\]
		Set $\Phi_{\omega, \epsilon}(x):=\frac{1}{T_\omega'(x)} \int_{T_\omega x}^{T_{\omega, \epsilon} x}\varphi(z)\, dm(z)$. Then,
		\[
		\Phi_{\omega, \epsilon}'(x)=-\frac{1}{T_\omega'(x)}T_\omega''(x)\Phi_{\omega, \epsilon}(x)+\frac{T_{\omega, \epsilon}'(x)}{T_\omega'(x)}\varphi (T_{\omega, \epsilon} x)-\varphi (T_\omega x).
		\]
		Hence substituting for $\varphi(T_{\omega}x)$ and integrating by parts,
		\[
		\begin{split}
			\int_{\mathbb S^1} (\mathcal L_{\omega, \epsilon}f-\mathcal L_\omega f)\varphi \, dm &=\int_{\mathbb S^1} f\Phi_{\omega, \epsilon}' \, dm+\int_{\mathbb S^1} f \bigg [\frac{T_\omega''}{T_\omega'}\Phi_{\omega, \epsilon}+\bigg (1-\frac{T_{\omega, \epsilon}'}{T_\omega'} \bigg )\varphi \circ T_{\omega, \epsilon} \bigg ]\, dm \\
			&=-\int_{\mathbb S^1} f'\Phi_{\omega, \epsilon} \, dm+\int_{\mathbb S^1} f \bigg [\frac{T_\omega''}{T_\omega'}\Phi_{\omega, \epsilon}+\bigg (1-\frac{T_{\omega, \epsilon}'}{T_\omega'} \bigg )\varphi \circ T_{\omega, \epsilon} \bigg ]\, dm.
		\end{split}
		\]
		Observe that
		\[ |\Phi_{\omega, \epsilon} |_\infty \le \underline{\lambda}(\omega)^{-1} |\varphi |_\infty d_{C^1} (T_{\omega, \epsilon}, T_\omega) \quad \text{and} \quad
		\bigg |1-\frac{T_{\omega, \epsilon}'}{T_\omega'}\bigg |_\infty \le \underline{\lambda}(\omega)^{-1}d_{C^1}(T_{\omega, \epsilon}, T_\omega). \]
		Moreover, \eqref{n1} implies that 
		\[
		\begin{split}
			\bigg | \frac{T_\omega''}{T_\omega'}\Phi_{\omega, \epsilon}\bigg |_\infty &\le \underline{\lambda}(\omega)^{-1}K(\omega)  |\Phi_{\omega, \epsilon}|_\infty 
			\le \underline{\lambda}(\omega)^{-2} K(\omega) d_{C^1} (T_{\omega, \epsilon}, T_\omega )| \varphi |_\infty.
		\end{split}
		\]
		Hence, 
		\[
		\begin{split}
			\bigg | \int_{\mathbb S^1} (\mathcal L_{\omega, \epsilon}f-\mathcal L_\omega f)\varphi \, dm \bigg | & \le \underline{\lambda}(\omega)^{-1}  \| f'\|_{L^1} |\varphi |_\infty d_{C^1} (T_{\omega, \epsilon}, T_\omega) \\
			&\phantom{=}+ \underline{\lambda}(\omega)^{-2} K(\omega)  \| f\|_{L^1} d_{C^1} (T_{\omega, \epsilon}, T_\omega )| \varphi |_\infty  \\
			&\phantom{\le}+\|f\|_{L^1}|\varphi |_\infty \underline{\lambda}(\omega)^{-1}d_{C^1}(T_{\omega, \epsilon}, T_\omega),
		\end{split}
		\]
		which implies that the desired conclusion holds with
		\[
		Q_1(\omega)=\underline{\lambda}(\omega)^{-1}+\underline{\lambda}(\omega)^{-2} K(\omega),
		\]
		which due to log-integrability of $\underline{\lambda}$ and $K$ belongs to $\mathcal K$.
	\end{proof}
	The proof of the following lemma is inspired by  the proof of~\cite[Proposition 5.11]{HC}.
	\begin{lemma}\label{LE2}
		For each $\ell \in \N$ such that $2\le \ell$ and $\ell+1\le r$, there exists $Q_{\ell} \in \mathcal K$ such that 
		\[
		\|(\L_{\omega, \epsilon}-\L_\omega)f\|_{W^{\ell-1,1}}\le Q_\ell(\omega)\|f\|_{W^{\ell,1}}d_{C^{\ell}}(T_{\omega,\epsilon},T_\omega),
		\]
		for $\epsilon \in I$, $\mathbb P$-a.e. $\omega \in \Omega$ and $f\in W^{\ell, 1}$.
	\end{lemma}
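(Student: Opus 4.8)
The plan is to estimate, for each $0\le j\le \ell-1$, the quantity $\|(\L_{\omega,\epsilon}f-\L_\omega f)^{(j)}\|_{L^1}$ separately, since $\|g\|_{W^{\ell-1,1}}=\sum_{j=0}^{\ell-1}\|g^{(j)}\|_{L^1}$, and to express the $j$-th derivative by means of the identity in Lemma~\ref{lemma:Crim}. Fix $j$ with $j+1\le \ell\le r-1$; then Lemma~\ref{lemma:Crim} gives
\[
(\L_{\omega,\epsilon}f)^{(j)}=\L_{T_{\omega,\epsilon}}\!\big(\Psi^{j}_{\omega,\epsilon}[f]\big),\qquad \Psi^{j}_{\omega,\epsilon}[f]:=\frac{1}{(T_{\omega,\epsilon}')^{2j}}\sum_{i=0}^{j}G_{j,i}\big(T_{\omega,\epsilon}',\dots,T_{\omega,\epsilon}^{(j+1)}\big)\,f^{(i)},
\]
and likewise for $\epsilon=0$, writing $\Psi^{j}_{\omega}:=\Psi^{j}_{\omega,0}$. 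I would then split
\[
(\L_{\omega,\epsilon}f-\L_\omega f)^{(j)}=\L_{T_{\omega,\epsilon}}\!\big(\Psi^{j}_{\omega,\epsilon}[f]-\Psi^{j}_{\omega}[f]\big)+\big(\L_{T_{\omega,\epsilon}}-\L_{T_\omega}\big)\Psi^{j}_{\omega}[f]=:(A_j)+(B_j).
\]
(For $j=0$ one has $G_{0,0}=1$, so $\Psi^{0}_{\omega,\epsilon}[f]=f$, $(A_0)=0$ and $(B_0)$ is handled directly by Lemma~\ref{LE1}.)

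For $(A_j)$ I would use the $L^1$-contraction property of transfer operators, $\|\L_T g\|_{L^1}\le\|g\|_{L^1}$, reducing matters to bounding $\|\Psi^{j}_{\omega,\epsilon}[f]-\Psi^{j}_{\omega}[f]\|_{L^1}$. The coefficient of $f^{(i)}$ is $F_{j,i}\big(T_{\omega,\epsilon}'(x),\dots,T_{\omega,\epsilon}^{(j+1)}(x)\big)$ with $F_{j,i}(t_1,\dots,t_{j+1}):=t_1^{-2j}G_{j,i}(t_1,\dots,t_{j+1})$, which is smooth on $\{t_1\ne 0\}$; since $\underline{\lambda}(\omega)\le|T_{\omega,\epsilon}'|$ and $\|T_{\omega,\epsilon}\|_{C^r}\le K(\omega)$ by~\eqref{n1}, the arguments stay in a fixed compact set depending only on $K(\omega)$ and $\underline{\lambda}(\omega)$, so the mean value inequality yields a pointwise bound $\lesssim P_{j,i}\big(K(\omega),\underline{\lambda}(\omega)^{-1}\big)\,d_{C^{j+1}}(T_{\omega,\epsilon},T_\omega)$ for a suitable polynomial $P_{j,i}$. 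Summing over $i\le j$ and using $d_{C^{j+1}}\le d_{C^{\ell}}$ gives $\|(A_j)\|_{L^1}\lesssim P_j\big(K(\omega),\underline{\lambda}(\omega)^{-1}\big)\,\|f\|_{W^{\ell-1,1}}\,d_{C^{\ell}}(T_{\omega,\epsilon},T_\omega)$.

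For $(B_j)$ I would apply Lemma~\ref{LE1} with $g=\Psi^{j}_{\omega}[f]$, which requires $\Psi^{j}_{\omega}[f]\in W^{1,1}$: the coefficients are $C^1$ because $T_\omega\in C^{j+2}\subseteq C^{\ell+1}$ (here the hypothesis $\ell+1\le r$ is used), and, computing $(\Psi^{j}_{\omega}[f])'$ by the product rule together with~\eqref{der}, one sees it involves $f^{(i)}$ for $i\le j+1\le\ell$ and derivatives of $T_\omega$ up to order $j+2\le\ell+1$, all bounded by $K(\omega)$; hence $\|\Psi^{j}_{\omega}[f]\|_{W^{1,1}}\lesssim\tilde P_j\big(K(\omega),\underline{\lambda}(\omega)^{-1}\big)\,\|f\|_{W^{\ell,1}}$. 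Then Lemma~\ref{LE1} and $d_{C^1}\le d_{C^{\ell}}$ give $\|(B_j)\|_{L^1}\le Q_1(\omega)\,\tilde P_j\big(K(\omega),\underline{\lambda}(\omega)^{-1}\big)\,\|f\|_{W^{\ell,1}}\,d_{C^{\ell}}(T_{\omega,\epsilon},T_\omega)$. Summing $(A_j)+(B_j)$ over $0\le j\le\ell-1$ yields the claim with
\[
Q_\ell(\omega):=\sum_{j=0}^{\ell-1}\Big(P_j\big(K(\omega),\underline{\lambda}(\omega)^{-1}\big)+Q_1(\omega)\,\tilde P_j\big(K(\omega),\underline{\lambda}(\omega)^{-1}\big)\Big),
\]
which is built from $K$, $\underline{\lambda}^{-1}$ and $Q_1\in\mathcal K$ by finitely many sums and products, hence belongs to $\mathcal K$ by Remark~\ref{TR} (recalling that $K$ and $\underline{\lambda}$ are log-integrable, so $K,\underline{\lambda}^{-1}\in\mathcal K$).

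The main obstacle is essentially the bookkeeping in the two estimates above. The key structural point — and the reason the statement must lose one derivative, $W^{\ell,1}\to W^{\ell-1,1}$ — is that in $(B_j)$ the differentiation inside $\Psi^{j}_{\omega}[f]$ brings in $f^{(j+1)}$, hence $f^{(\ell)}$ when $j=\ell-1$; at the same time differentiating the polynomial coefficients introduces $T_\omega^{(j+2)}$, so one must carefully verify that $\ell+1\le r$ suffices both to make all coefficients $C^1$ and to bound them, together with their derivatives, by powers of $K(\omega)$ via~\eqref{n1}.
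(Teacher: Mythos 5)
Your proof is correct and follows essentially the same route as the paper: both use Lemma~\ref{lemma:Crim} to write the $(\ell-1)$-st (and lower) derivatives via the coefficient functions $G_{\ell-1,j}$, then add and subtract a cross term so that one piece is an operator difference applied to a fixed $W^{1,1}$ function (handled by Lemma~\ref{LE1}, costing the extra derivative of $f$ and of $T_\omega$ up to order $\ell+1\le r$) and the other is a fixed transfer operator applied to the difference of coefficient functions (handled by sup-norm/mean-value estimates in terms of $K(\omega)$ and $\underline{\lambda}(\omega)^{-1}$). The only cosmetic differences are that you insert the middle term $\L_{T_{\omega,\epsilon}}\Psi^{j}_{\omega}[f]$ where the paper inserts $\L_\omega$ applied to the $\epsilon$-coefficients (and the paper clears the denominators $(T')^{2(\ell-1)}$ before applying the mean-value bound, which sidesteps any worry about the segment in the $t_1$-variable leaving the region $|t_1|\ge\underline{\lambda}(\omega)$).
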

	
	\begin{proof}
		Throughout the proof, $C$ will denote a generic element of $\mathcal K$ (depending on $\ell$ but not on $\epsilon$) that can change values from one occurrence to the next.
		
		By Lemma \ref{lemma:Crim}, we have that  for $g\in L^\infty(\mathbb S^1)$,
		\begin{align*}
			&\int_{\mathbb S^1} ((\L_{\omega, \epsilon}-\L_\omega)f)^{(\ell-1)}g \, dm \\
			&=\int_{\mathbb S^1} \L_{\omega, \epsilon}\left(\sum_{j=0}^{\ell-1}(T'_{\omega, \epsilon})^{-2(\ell-1)} G_{\ell-1,j}(T_{\omega, \epsilon}',\dots,T_{\omega, \epsilon}^{(\ell)})f^{(j)}\right)g\, dm\\
			&\phantom{=}-\int_{\mathbb S^1} \L_\omega\left(\sum_{j=0}^{\ell-1}(T'_{\omega})^{-2(\ell-1)}G_{\ell-1,j}(T_{\omega}',\dots,T_{\omega}^{(\ell)})f^{(j)}\right)g\, dm\\
			&=\int_{\mathbb S^1}(\L_{\omega, \epsilon}-\L_\omega)\left(\sum_{j=0}^{\ell-1}(T'_{\omega, \epsilon})^{-2(\ell-1)} G_{\ell-1,j}(T_{\omega, \epsilon}',\dots,T_{\omega, \epsilon}^{(\ell)})f^{(j)}\right)g \, dm\\
			&\phantom{=}+\int_{\mathbb S^1}\sum_{j=0}^{\ell-1}\left(\frac{G_{\ell-1,j}(T_{\omega, \epsilon}',\dots,T_{\omega, \epsilon}^{(\ell)})}{(T'_{\omega, \epsilon})^{2(\ell-1)}}-\frac{G_{\ell-1,j}(T_{\omega}',\dots,T_{\omega}^{(\ell)})}{(T'_{\omega})^{2(\ell-1)}}\right)f^{(j)}g\circ T_\omega \,  dm\\
			&=:(I)+(II).
		\end{align*}
		By Lemma~\ref{LE1}, we have (using that $\|\psi h\|_{W^{1,1}} \le \|\psi \|_{C^1} \cdot \|h\|_{W^{1,1}}$ for $\psi \in C^1(\mathbb S^1)$ and $h\in W^{1,1}$) that
		
		\begin{equation}\label{r0}
			\begin{split}
				&\bigg |\int_{\mathbb S^1}(\L_{\omega, \epsilon}-\L_\omega)\left(\sum_{j=0}^{\ell-1}(T'_{\omega, \epsilon})^{-2(\ell-1)} G_{\ell-1,j}(T_{\omega, \epsilon}',\dots,T_{\omega, \epsilon}^{(\ell)})f^{(j)}\right)g \, dm\bigg | \\
				&\le C(\omega)d_{C^1}(T_{\omega,\epsilon},T_\omega)|g|_{\infty}\left\|\sum_{j=0}^{\ell-1}(T'_{\omega, \epsilon})^{-2(\ell-1)}G_{\ell-1,j}(T_{\omega, \epsilon}',\dots,T_{\omega, \epsilon}^{(\ell)})f^{(j)} \right\|_{W^{1,1}}\\
				&\le C(\omega)d_{C^1}(T_{\omega,\epsilon},T_\omega)|g|_{\infty}\|f\|_{W^{\ell,1}}\sum_{j=0}^{\ell-1}\|(T'_{\omega, \epsilon})^{-2(\ell-1)}G_{\ell-1,j}(T_{\omega, \epsilon}',\dots,T_{\omega, \epsilon}^{(\ell)})\|_{C^1}\\
				&\le C(\omega)d_{C^1}(T_{\omega,\epsilon},T_\omega)|g|_{\infty}\|f\|_{W^{\ell,1}}\sum_{j=0}^{\ell-1}\|(T'_{\omega, \epsilon})^{-2(\ell-1)} \|_{C^1} \cdot \|G_{\ell-1,j}(T_{\omega, \epsilon}',\dots,T_{\omega, \epsilon}^{(\ell)})\|_{C^1}.
			\end{split}
		\end{equation}

		Moreover, \eqref{n1} implies that 
		\[
		\|(T'_{\omega, \epsilon})^{-2(\ell-1)} \|_{C^1} \le \underline{\lambda}(\omega)^{-2(\ell-1)}+2(l-1)\underline{\lambda}(\omega)^{-2\ell+1}K(\omega).
		\]
		By taking into account~\eqref{der}, we conclude that 
		\begin{equation}\label{r1}
			\begin{split}
				&\bigg |\int_{\mathbb S^1}(\L_{\omega, \epsilon}-\L_\omega)\left(\sum_{j=0}^{\ell-1}(T'_{\omega, \epsilon})^{-2(\ell-1)} G_{\ell-1,j}(T_{\omega, \epsilon}',\dots,T_{\omega, \epsilon}^{(\ell)})f^{(j)}\right)g \, dm\bigg | \\
				&\le C(\omega)d_{C^1}(T_{\omega,\epsilon},T_\omega)|g|_{\infty}\|f\|_{W^{\ell,1}},
			\end{split}
		\end{equation}
		for $\epsilon \in I$, $\mathbb P$-a.e. $\omega \in \Omega$ and $f\in W^{\ell, 1}$.
		
		Similarly, 
		\begin{equation}\label{r2}
			\begin{split}
				& \bigg | \int_{\mathbb S^1}\sum_{j=0}^{\ell-1}\left(\frac{G_{\ell-1,j}(T_{\omega, \epsilon}',\dots,T_{\omega, \epsilon}^{(\ell)})}{(T'_{\omega, \epsilon})^{2(\ell-1)}}-\frac{G_{\ell-1,j}(T_{\omega}',\dots,T_{\omega}^{(\ell)})}{(T'_{\omega})^{2(\ell-1)}}\right)f^{(j)}g\circ T_\omega \,  dm \bigg | \\
				&\le |g|_\infty \sum_{j=0}^{\ell -1}\bigg |\frac{G_{\ell-1,j}(T_{\omega, \epsilon}',\dots,T_{\omega, \epsilon}^{(\ell)})}{(T'_{\omega, \epsilon})^{2(\ell-1)}}-\frac{G_{\ell-1,j}(T_{\omega}',\dots,T_{\omega}^{(\ell)})}{(T'_{\omega})^{2(\ell-1)}}\bigg |_\infty \cdot \|f^{(j)} \|_{L^1} \\
				&\le |g|_\infty \underline{\lambda}(\omega)^{-4(l-1)}\|f\|_{W^{\ell, 1}} \sum_{j=0}^{\ell -1}P_{\omega, \epsilon, j, \ell} \\
				&\le C(\omega)|g|_\infty \|f\|_{W^{\ell, 1}},
			\end{split}
		\end{equation}
		for $\epsilon \in I$, $\mathbb P$-a.e. $\omega \in \Omega$ and $f\in W^{\ell, 1}$, where
		\[
		P_{\omega, \epsilon, j, \ell}=\bigg|G_{\ell-1,j}(T_{\omega, \epsilon}',\dots,T_{\omega, \epsilon}^{(\ell)})(T'_{\omega})^{2(\ell-1)}-G_{\ell-1,j}(T_{\omega}',\dots,T_{\omega}^{(\ell)})(T'_{\omega, \epsilon})^{2(\ell-1)}\bigg |_\infty.
		\]
		The conclusion of the lemma follows readily from~\eqref{r0}, \eqref{r1} and~\eqref{r2}.
	\end{proof}
	The following result is a simple consequence of~\eqref{n2}, Lemmas~\ref{LE1} and~\ref{LE2} and the mean-value theorem.
	\begin{cor}\label{KK}
		For each $\ell \in \N$, $\ell+1\le r$ there exists $\tilde Q_l\in \mathcal K$ such that 
		\[
		\|(\L_{\omega, \epsilon}-\L_\omega)f\|_{W^{\ell-1,1}}\le |\epsilon| \tilde Q_\ell(\omega) \|f\|_{W^{\ell,1}},
		\]
		for $\epsilon \in I$, $\mathbb P$-a.e. $\omega \in \Omega$ and $f\in W^{\ell, 1}$, where $\|\cdot \|_{W^{0,1}}:=\| \cdot \|_{L^1}$.
	\end{cor}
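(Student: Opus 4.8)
The plan is to reduce the estimate to Lemmas~\ref{LE1} and~\ref{LE2} together with a bound showing that the $C^\ell$-distance $d_{C^\ell}(T_{\omega,\epsilon},T_\omega)$ is of order $|\epsilon|$. Fix $\ell\in\N$ with $\ell+1\le r$, so that $\ell\le r-1$. For $\mathbb P$-a.e.\ $\omega$ we have $\mathbf T(\omega)\in C^r(I\times\mathbb S^1,\mathbb S^1)$, hence mixed partials of total order at most $r$ exist and are continuous, and for $0\le j\le\ell$ and $x\in\mathbb S^1$ one may write, in a local chart,
\[
\partial_x^j T_{\omega,\epsilon}(x)-\partial_x^j T_{\omega,0}(x)=\int_0^\epsilon \partial_s\big(\partial_x^j T_{\omega,s}\big)(x)\,ds .
\]
Taking absolute values and the supremum over $x$ and $0\le j\le\ell$, the mean value theorem yields
\[
d_{C^\ell}(T_{\omega,\epsilon},T_\omega)\le |\epsilon|\,\sup_{s\in I}\|\partial_s T_{\omega,s}\|_{C^\ell}\le |\epsilon|\,K(\omega),
\]
for $\epsilon\in I$ and $\mathbb P$-a.e.\ $\omega$, the last inequality following from~\eqref{n2} (with $i=1$) and $\ell\le r-1$.

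With this in hand, the case $\ell=1$ follows by inserting the bound into Lemma~\ref{LE1}, which gives $\|(\L_{\omega,\epsilon}-\L_\omega)f\|_{L^1}\le Q_1(\omega)K(\omega)\,|\epsilon|\,\|f\|_{W^{1,1}}$, while for $\ell\ge2$ the same substitution in Lemma~\ref{LE2} gives $\|(\L_{\omega,\epsilon}-\L_\omega)f\|_{W^{\ell-1,1}}\le Q_\ell(\omega)K(\omega)\,|\epsilon|\,\|f\|_{W^{\ell,1}}$. In both cases one takes $\tilde Q_\ell:=Q_\ell\cdot K$, with $Q_\ell$ the tempered random variable provided by the relevant lemma; since $Q_\ell\in\mathcal K$, $K$ is log-integrable and therefore belongs to $\mathcal K$, and $\mathcal K$ is stable under products (Remark~\ref{TR}), we get $\tilde Q_\ell\in\mathcal K$, completing the proof.

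The argument is essentially routine and I do not expect a genuine obstacle; the only point deserving a little care is the uniformity in $s$ of the estimate $\sup_{s\in I}\|\partial_s T_{\omega,s}\|_{C^{r-1}}\le K(\omega)$ used above. It is not completely immediate, because~\eqref{n2} is asserted, for each fixed $\epsilon$, only on a full-measure set depending on $\epsilon$ (cf.\ the footnote to Lemma~\ref{lemma:weakLY}); this is dealt with by applying~\eqref{n2} along a countable dense subset of $I$ and invoking the continuity of $s\mapsto\partial_s T_{\omega,s}$ in $C^{r-1}$, valid for $\mathbb P$-a.e.\ $\omega$.
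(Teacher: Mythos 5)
Your proposal is correct and matches the paper's intended argument: the paper itself presents Corollary~\ref{KK} as an immediate consequence of \eqref{n2}, Lemmas~\ref{LE1} and~\ref{LE2}, and the mean-value theorem, which is exactly the reduction you carry out via $d_{C^\ell}(T_{\omega,\epsilon},T_\omega)\le|\epsilon|K(\omega)$ and $\tilde Q_\ell:=Q_\ell\cdot K\in\mathcal K$. Your closing remark about the $\epsilon$-dependent full-measure sets is a careful observation the paper glosses over, and your fix (countable dense subset plus continuity in $\epsilon$) is the standard and correct one.
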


	\subsection{Derivative operator}
	We again take an  arbitrary parameterized smooth expanding on average cocycle $\mathbf T\colon \Omega \to C^r(I\times \mathbb S^1, \mathbb S^1)$.
	We start this section by recalling some elementary facts. For $f\in C^\ell(\mathbb S^1,\R)$, we set $M_f(\phi):=f\phi$. Then, $M_f$ is a bounded operator on $W^{\ell,1}(\mathbb S^1,\R)$.  More precisely, there exists a constant $C$ (depending only on $\ell$) such that 
	\begin{equation}\label{eq:prodop}
		\|M_f(\phi)\|_{W^{\ell,1}}=\|f\phi \|_{W^{\ell, 1}} \le C\|f\|_{C^\ell}\cdot \|\phi\|_{W^{\ell,1}}, \quad \text{for $\phi \in W^{\ell, 1}$.}
	\end{equation}
	For $\phi\in C^r(\mathbb S^1,\R)$, set 
	\begin{align}
		g_{\omega,\epsilon}&:=\frac{1}{|T'_{\omega,\epsilon}|}\in C^{r-1}(\mathbb S^1,\R)\\
		V_{\omega,\epsilon}(\phi) &:=-\frac{\phi'}{T'_{\omega,\epsilon}}\cdot\partial_\epsilon T_{\omega,\epsilon}\in C^{r-1}(\mathbb S^1,\R).
	\end{align}
	Remark that $\phi \mapsto V_{\omega,\epsilon}(\phi)$ defines a bounded operator from $W^{\ell,1}$ to $W^{\ell-1,1}$, for any $\ell\le r$. 
	\\We also define
	\begin{equation}
		J_{\omega,\epsilon}:=\frac{\partial_\epsilon g_{\omega,\epsilon}+V_{\omega,\epsilon}(g_{\omega,\epsilon})}{g_{\omega,\epsilon}}\in C^{r-2}(\mathbb S^1,\R).
	\end{equation} 
	A direct consequence of Definition \ref{EC}, equations  \eqref{n1} and \eqref{n2} is that there exists $\tilde K \in \mathcal{K} $ such that for $\epsilon \in I$ and $\mathbb P$-a.e. $\omega \in \Omega$,
	\begin{equation}\label{eq:goodbound}
		\|g_{\omega,\epsilon}\|_{C^{r-1}}\le \tilde K(\omega), \quad  \|\partial_{\epsilon} g_{\omega,\epsilon}\|_{C^{r-2}} \le \tilde K(\omega) \quad \text{and} \quad \|J_{\omega,\epsilon}\|_{C^{r-2}}\le \tilde K(\omega).
	\end{equation}
	We note that $\tilde K$ is a polynomial in the tempered random variable $K$ appearing in \eqref{n1} and \eqref{n2}.
	As before, let $ \L_{\omega, \epsilon}$ be the transfer operator associated to $T_{\omega, \epsilon}:=\mathbf T(\omega)(\epsilon, \cdot)$. By formal differentiation (see also \cite[p.18]{DS1}), we have that
	\begin{align*}
		\partial_\epsilon [\L_{\omega,\epsilon}\phi]&=\L_{\omega,\epsilon}(J_{\omega,\epsilon}\phi+V_{\omega,\epsilon}\phi)\\
		\partial_{\epsilon}^2[\L_{\omega, \epsilon}\phi]&= \L_{{\omega,\epsilon}}\left(J_{\omega,\epsilon}^2\phi+J_{\omega,\epsilon}(V_{\omega,\epsilon}\phi)+V_{\omega,\epsilon}(J_{\omega,\epsilon}\phi)+V_{\omega,\epsilon}(V_{\omega,\epsilon}\phi)+[\partial_\epsilon J_{\omega,\epsilon}]\cdot \phi+\partial_{\epsilon}[V_{\omega,\epsilon}\phi]\right).
	\end{align*}
	
	For later usage we will need the differentiation in absence of the perturbation. Namely, we define $\hat{\L}_\omega$ by
	\begin{equation}\label{hatL}
		\hat{\L}_\omega \phi:=\L_\omega(J_{\omega, 0} \phi+V_{\omega, 0}\phi).
	\end{equation}
	By Lemma \ref{lemma:weakLY}, we have, for $1\le j\le r-3$,
	\[
	\begin{split}
		&\|\partial_\epsilon^2[\mathcal L_{{\omega,\epsilon}}\phi] \|_{W^{j,1}}  \\
		&= \| \mathcal L_{{\omega,\epsilon}}\left(J_{\omega,\epsilon}^2\phi+J_{\omega,\epsilon}(V_{\omega,\epsilon}\phi)+V_{\omega,\epsilon}(J_{\omega,\epsilon}\phi)+V_{\omega,\epsilon}(V_{\omega,\epsilon}\phi)+[\partial_\epsilon J_{\omega,\epsilon}]\cdot \phi+\partial_{\epsilon}[V_{\omega,\epsilon}\phi]\right)\|_{W^{j,1}}\\
		&\le C_j(\omega) \|J_{\omega,\epsilon}^2\phi+J_{\omega,\epsilon}(V_{\omega,\epsilon}\phi)+V_{\omega,\epsilon}(J_{\omega,\epsilon}\phi)+V_{\omega,\epsilon}(V_{\omega,\epsilon}\phi)+[\partial_\epsilon J_{\omega,\epsilon}]\cdot \phi+\partial_{\epsilon}[V_{\omega,\epsilon}\phi]\|_{W^{j,1}}.
	\end{split}
	\]
	We will now estimate the  $W^{j,1}$ norm of each term appearing on the R.H.S above. By $C$ we will denote a generic constant (that doesn't depend on $\omega$, $\epsilon$ or $\phi$). Firstly, we have (see \eqref{eq:prodop} and \eqref{eq:goodbound})
	\[
	\|J_{\omega,\epsilon}^2\phi\|_{W^{j,1}} \le C\|J_{\omega, \epsilon}\|_{C^j} \| J_{\omega, \epsilon}\phi \|_{W^{j,1}} \le C\|J_{\omega, \epsilon}\|_{C^j}^2 \| \phi \|_{W^{j,1}}\le C\tilde K^2(\omega)\| \phi \|_{W^{j,1}}.
	\]
	Secondly,
	\[
	\begin{split}
		\|J_{\omega,\epsilon}(V_{\omega,\epsilon}\phi)\|_{W^{j,1}} & \le C\|J_{\omega, \epsilon}\|_{C^j}\cdot \|\phi'g_{\omega, \epsilon} \partial_{\epsilon}T_{\omega, \epsilon}(\cdot)\|_{W^{j,1}}  \\
		&\le C\|J_{\omega, \epsilon}\|_{C^j}\cdot \|g_{\omega, \epsilon} \partial_{\epsilon}T_{\omega, \epsilon}(\cdot)\|_{C^j} \cdot \|\phi\|_{W^{j+1,1}}\\
		&\le C \tilde K^2(\omega)K(\omega)\|\phi\|_{W^{j+1,1}}.
	\end{split}
	\]
	Thirdly,
	\[
	\begin{split}
		\|V_{\omega,\epsilon}(J_{\omega,\epsilon}\phi)\|_{W^{j,1}} &\le C\|g_{\omega, \epsilon} \partial_{\epsilon}T_{\omega, \epsilon}(\cdot)\|_{C^j} \cdot \|(J_{\omega,\epsilon}\phi)'\|_{W^{j,1}} \\
		&\le C\|g_{\omega, \epsilon} \partial_{\epsilon}T_{\omega, \epsilon}(\cdot)\|_{C^j}\|J_{\omega,\epsilon}\phi\|_{W^{j+1,1}} \\
		&\le C\|g_{\omega, \epsilon} \partial_{\epsilon}T_{\omega, \epsilon}(\cdot)\|_{C^j}\| J_{\omega, \epsilon}\|_{C^{j+1}}\|\phi\|_{W^{j+1,1}}\\
		&\le C \tilde K^2(\omega)K(\omega)\|\phi\|_{W^{j+1,1}}.
	\end{split}
	\]
	Fourthly,
	\[
	\begin{split}
		\|V_{\omega,\epsilon}(V_{\omega,\epsilon}\phi)\|_{W^{j,1}} &\le C\|g_{\omega, \epsilon} \partial_{\epsilon}T_{\omega, \epsilon}(\cdot)\|_{C^j} \cdot \|(V_{\omega,\epsilon}\phi)'\|_{W^{j,1}} \\
		&\le C\|g_{\omega, \epsilon} \partial_{\epsilon}T_{\omega, \epsilon}(\cdot)\|_{C^j}\|V_{\omega,\epsilon}\phi \|_{W^{j+1,1}} \\
		&\le C\|g_{\omega, \epsilon} \partial_{\epsilon}T_{\omega, \epsilon}(\cdot)\|_{C^j}\|g_{\omega, \epsilon} \partial_{\epsilon}T_{\omega, \epsilon}(\cdot)\|_{C^{j+1}} \cdot \|\phi '\|_{W^{j+1,1}} \\
		&\le C\|g_{\omega, \epsilon} \partial_{\epsilon}T_{\omega, \epsilon}(\cdot)\|_{C^j}\|g_{\omega, \epsilon} \partial_{\epsilon}T_{\omega, \epsilon}(\cdot)\|_{C^{j+1}} \cdot \|\phi \|_{W^{j+2,1}}\\
		&\le C\tilde K^2(\omega)K^2(\omega)\|\phi \|_{W^{j+2,1}}.
	\end{split}
	\]
	Fifthly,
	\[
	\begin{split}
		\|[\partial_\epsilon J_{\omega,\epsilon}]\cdot \phi\|_{W^{j,1}} &\le C \|\partial_\epsilon J_{\omega,\epsilon}\|_{C^j} \cdot \|\phi \|_{W^{j,1}}.
	\end{split}
	\]
	Noting that
	\[
	\begin{split}
		\partial_\epsilon J_{\omega,\epsilon} &=\partial_\epsilon T_{\omega, \epsilon}' \bigg (\partial_\epsilon g_{\omega, \epsilon}-g_{\omega, \epsilon}' g_{\omega, \epsilon} \partial_{\epsilon}T_{\omega, \epsilon}\bigg )\\ 
		&\phantom{=} + T_{\omega, \epsilon}'\bigg (\partial_\epsilon^2 g_{\omega, \epsilon}-\partial_\epsilon g_{\omega, \epsilon}' g_{\omega, \epsilon} \partial_{\epsilon}T_{\omega, \epsilon}-g_{\omega, \epsilon}' \partial_\epsilon g_{\omega, \epsilon} \partial_{\epsilon}T_{\omega, \epsilon}-g_{\omega, \epsilon}'   g_ {\omega, \epsilon} \partial_{\epsilon}^2 T_{\omega, \epsilon}\bigg ),
	\end{split}
	\]
	we obtain that 
	\[
	\|[\partial_\epsilon J_{\omega,\epsilon}]\cdot \phi\|_{W^{j,1}}\le C\bar K(\omega)\|\phi \|_{W^{j,1}},
	\]
	where $\bar K$ is a tempered random variable, which is a  polynomial in the tempered random variable $K$ appearing in \eqref{n1} and~\eqref{n2}.
	\\ Finally, 
	\[
	\begin{split}
		\|\partial_{\epsilon}[V_{\omega,\epsilon}\phi]\|_{W^{j,1}} &=\|\phi' \partial_\epsilon g_{\omega, \epsilon}\partial_{\epsilon}T_{\omega, \epsilon}+\phi' g_{\omega, \epsilon} \partial_\epsilon^2 T_{\omega, \epsilon}\|_{W^{j,1}} \\
		&\le C\left(\|\partial_\epsilon g_{\omega, \epsilon}\partial_{\epsilon}T_{\omega, \epsilon} \|_{C^j}+
		\|g_{\omega, \epsilon} \partial_\epsilon^2 T_{\omega, \epsilon}\|_{C^j}\right)\|\phi'\|_{W^{j,1}} \\
		&\le C \left(\|\partial_\epsilon g_{\omega, \epsilon}\partial_{\epsilon}T_{\omega, \epsilon} \|_{C^j}+
		\|g_{\omega, \epsilon} \partial_\epsilon^2 T_{\omega, \epsilon}\|_{C^j}\right) \| \phi \|_{W^{j+1,1}}\\
		&\le 2C\tilde K(\omega)K(\omega) \| \phi \|_{W^{j+1,1}}.
	\end{split}
	\]
	
	Putting the last six estimates together, we conclude that for each $1\le j\le r-3$,  there exists $\tilde C_j\in \mathcal K$ such that
	\[
	\|\partial_\epsilon^2[\mathcal L_{\omega,\epsilon}\phi] \|_{W^{j,1}} \le \tilde C_j(\omega)\| \phi \|_{W^{j+2,1}}.
	\]
	We can now proceed as in~\cite[p. 18]{DS1}:  by Taylor's formula of order two, we have that 
	\[
	\begin{split}
		\|\mathcal L_{\omega, \epsilon}\phi-\L_{\omega}\phi -\epsilon \hat{\mathcal L}_\omega \phi \|_{W^{j,1}} &=\left\| \int_0^\epsilon \int_0^\eta \partial_\epsilon^2[\mathcal L_{\omega,\epsilon}\phi ]\rvert_{\epsilon=\xi}\, d\xi \, d\eta \right\|_{W^{j,1}} \\
		& \le \frac{ \epsilon^2}{ 2}\tilde C_j(\omega)\| \phi \|_{W^{j+2,1}}.
	\end{split}
	\]
	
	Hence,  for $1\le j\le r-3$, $\epsilon \in I\setminus \{0\}$, $\phi \in W^{j+2, 1}$ and $\mathbb P$-a.e. $\omega \in \Omega$,
	\begin{equation}\label{1700}
		\bigg \| \frac{1}{\epsilon} \bigg (\mathcal L_{\omega, \epsilon}\phi-\mathcal L_\omega \phi \bigg )-\hat{\L}_\omega \phi  \bigg \|_{W^{j,1}} \le \frac{|\epsilon|}{2} \tilde{C}_j(\omega) \|\phi \|_{W^{j+2,1}}.
	\end{equation}

	\subsection{Proof of Theorem \ref{LR1}}\label{sec:LRproof}

	In order to establish Theorem~\ref{LR1}, we
		 apply Theorem~\ref{LR} for the choice of spaces
		\[
		\mathcal B_w=W^{1,1}, \quad \mathcal B_s=W^{2,1}, \quad   \mathcal B_{ss}=W^{3,1},
		\]
		and with the functional $\psi$ given by $\psi (h)=\int_{\mathbb S^1}h\, dm$.
		It remains to observe the following:
		\begin{itemize}
			\item \eqref{c11} follows from Corollary~\ref{KORO};
			\item \eqref{c22} is established in Proposition~\ref{1655};
			\item \eqref{c33} is a consequence of Proposition~\ref{prop} (applied for $\epsilon=0$);
			\item \eqref{c44} and~\eqref{c55} are proved in Corollary~\ref{KK};
			\item \eqref{c66} is established in~\eqref{1700}, where $\hat \L_\omega$ is given by~\eqref{hatL}.
		\end{itemize}
	
	\appendix
	\section{An example where quenched response holds and annealed response fails}
	Let us remind the reader that whenever quenched linear response holds (see e.g. Theorem \ref{LR1}), one has, for any smooth observable $\phi$, that 
	\begin{align*}
	\int_{\mathbb S^1}\phi \hat h_\omega dm &= \sum_{n=0}^{\infty}\int_{\mathbb S^1}\phi\cdot\L^{n}_{\sigma^{-n}\omega}\hat{\L}_{\sigma^{-n-1}\omega} h_{\sigma^{-n-1}\omega} dm\\
	&= \sum_{n=0}^{\infty}\int_{\mathbb S^1}\phi\circ T^n_{\sigma^{-n}\omega}\cdot\hat{\L}_{\sigma^{-n-1}\omega} h_{\sigma^{-n-1}\omega} dm.
	\end{align*}
	In particular, the integral on the L.H.S is well-defined. \\If one is interested in \emph{annealed} linear response, the object  of interest becomes the (double) integral $\int_{\Omega}\int_{\mathbb S^1}\phi \hat h_\omega dm~d\mathds P$: when it holds this integral is well-defined, and one has
	\[\int_\Omega\int_{\mathbb S^1}\phi \hat h_\omega dm~d\mathds P = \sum_{n=0}^{\infty}\int_{\Omega}\int_{\mathbb S^1}\phi\circ T_\omega^n\cdot\hat{\L}_{\sigma^{-1}\omega} h_{\sigma^{-1}\omega} dm~d\mathds P.\]
	Given that quenched results concerns a.e trajectory, and that annealed one are on average, in general one expects that a quenched result implies its annealed counterpart. This intuition is validated in the context of response theory by the existing results \cite{DS,RS,CN}. However, when there is no stochastic uniformity, this intuition can fail, as the next example shows:  
	\medskip
	
	\noindent Consider $(\tilde\Omega,\mathcal B,\mathds Q, S)$ the full-shift over $\{1,2,\dots\}$, with probability vector \\$(Z,Z/2^{2+\delta},\dots,Z/n^{2+\delta},\dots)$, for some $0\le\delta\le 1$, $Z$ being the normalization constant.
	\\Let $h:\tilde\Omega\to\R$ be the (positive) observable defined by $h(\omega)=\omega_0$ if $\omega:=(\omega_n)_{n\in \mathbb Z} \in\tilde\Omega$. Note that
	\[\int_{\tilde\Omega}h~d\mathds Q=\sum_{i\ge 1}i\cdot\frac{Z}{i^{2+\delta}}=Z\sum_{i=1}^\infty\frac{1}{i^{1+\delta}}<+\infty,\]
	when $0<\delta\le 1$.
	\\Define $(\Omega,\mathcal F,\mathbb P,\sigma)$ to be the suspension over $S$ with roof function $h$, i.e. $\Omega:=\{(\omega,i)\in\tilde\Omega\times\N,~ 0\le i<h(\omega)\}$, $\sigma:\Omega\circlearrowleft$ is given by
	\[
	\sigma(\omega,i):=\left\{
	\begin{aligned}
		&(\omega,i+1)~&\text{if}~i<h(\omega)-1\\
		&(S\omega,0)~&\text{if}~i=h(\omega)-1
	\end{aligned}
	\right.
	\]
	and $\mathbb P(A):=\left(\int_{\tilde\Omega} h~ d\mathds Q\right)^{-1}\sum_{i\ge 0}\mathds Q\left(A\cap(\tilde\Omega\times\{i\})\right)$.
	\\We can now define our random system: take $T_0:\mathbb S^1\circlearrowleft$ to be the doubling map, i.e. $T_0(x)=2x \ (\text{mod} \ 1)$, and let $T_1$ be the identity map on $\mathbb S^1$.
	We consider the random circle map $T_{(\omega,i)}$, $(\omega,i)\in\Omega$ defined by
	\[
	T_{(\omega,i)}:=\begin{cases}
		T_1 & \text{if $i<h(\omega)-1$} \\
		T_0 & \text{if  $i=h(\omega)-1$.}
	\end{cases}
	\]
Clearly, 	$(T_{(\omega,i)})_{(\omega, i)\in \Omega}$ is an expanding on average cocycle, i.e.  \[	\int_{\Omega}\log\lambda_{(\omega, i)}~d\mathds P ((\omega, i))>0.\] 
 Indeed, observe that  $(T_{(\omega,i)})_{(\omega, i)\in \Omega}$  is a particular case of cocycles $(T_\omega)_{\omega}$ introduced in Example~\ref{EXAMP}.
	Moreover, $\mu_{(\omega, i)}=m$ for $(\omega, i) \in \Omega$, where $m$ denotes the Lebesgue measure on $\mathbb S^1$.
	 
	Let $n_c$ be the (random) covering time for the interval $[0,1/2]$:
	\[
	n_c (\omega, i):=\min \{ k\in \N: T_{(\omega,i)}^k([0,1/2])=\mathbb S^1 \}, \quad (\omega, i) \in \Omega. 
	\]
	Then, it is easy to see that 
$n_c(\omega, i)=h(\omega)-i$. Observe that $n_c$ is not integrable. Indeed, for each $N\in \N$ we have that 
	\begin{align*}
		\mathbb P(n_c(\omega,i)=N)&=\left(\int_{\tilde\Omega} h~ d\mathds Q\right)^{-1}\sum_{i\ge 0}\mathds Q(h(\omega)-i=N)\\
		&=\left(\int_{\tilde\Omega} h~ d\mathds Q\right)^{-1}\sum_{i\ge N}\mathds Q(\omega_0=i)\\
		&=\left(\int_{\tilde\Omega} h~ d\mathds Q\right)^{-1}\sum_{i\ge N}\frac{Z}{i^{2+\delta}}\sim\frac{C}{N^{1+\delta}},
	\end{align*}
	for some constant $C>0$, which easily implies that $n_c$ is not integrable. 
	\\We now introduce the observable: consider a $\psi\in C^\infty(\mathbb S^1)$, such that:
	\begin{itemize}
		\item [(i)] $\text{supp}~\psi\subset I$, where $I\subset\mathbb S^1$ is an interval such that $I\cap\dfrac{1}{2}I=\emptyset$ (in particular, we can take any small enough $I\not\ni 0$).
		\item [(ii)] $\int_{\mathbb S^1}\psi~dm=0$ and $\int_{\mathbb S^1}\psi^2~dm=1$.
	\end{itemize}
	It is easy to see that  $\int_{\mathbb S^1}\psi\cdot\psi\circ T_0~dm=0$. Therefore, we have
	\[
	\int_{\mathbb S^1} \psi\cdot\psi\circ T_{(\omega,i)}^n~dm=
	\left\{
	\begin{aligned}
		&1~\text{if}~n<n_c(\omega,i)\\
		&0~\text{otherwise.}
	\end{aligned}
	\right.
	\]
In particular, it follows from the non-integrability of $n_c$ that 
\begin{equation}\label{eq:nonint}
	\int_{\Omega}\sum_{n=0}^\infty\int_{\mathbb S^1} \psi\cdot\psi\circ T_{(\omega,i)}^n~dm~d\mathds P =+\infty
	\end{equation}
	Let us now introduce the perturbed cocycle $T_{(\omega,i),\epsilon}:= D_{\epsilon}\circ T_{(\omega,i)}$, where $D_{\epsilon}:\mathbb S^1\circlearrowleft$ is given by
	\[D_{\epsilon}(x):=x-\epsilon \int_0^x\psi~dm \ (mod \ 1).\]
	Denote by $S(x):=-\int_0^x\psi~dm$. Since this perturbation is smooth and deterministic ($D_\epsilon$ does not depend on $\omega$), we may apply Theorem \ref{LR1}, and obtain that for a.e. $(\omega,i)\in\Omega$, the integral $\int_{\mathbb S^1}\psi \hat h_{(\omega, i)} dm$ is well defined, and satisfies
	\begin{align*}
		\int_{\mathbb S^1}\psi \hat h_{(\omega,i)} dm &= \sum_{n=0}^{\infty}\int_{\mathbb S^1}\psi\L_{\sigma^{-n}(\omega,i)}^n\hat{\L}_{\sigma^{-n-1}(\omega,i)}h_{\sigma^{-n-1}(\omega,i)} dm\\
		& = -\sum_{n=0}^\infty \int_{\mathbb S^1}\psi\circ T_{\sigma^{-n}(\omega,i)}^{n} S' dm\\
		& =\sum_{n=0}^\infty \int_{\mathbb S^1}\psi\circ T_{\sigma^{-n}(\omega,i)}^{n} \psi dm,
\end{align*} 
where we used that  $\hat{\L}_{(\omega,i)}f = -(\L_{(\omega,i)}(f)\cdot S)'$ (which can be proved by arguing as in \cite[Sec 6.2]{GS}) and $h_{(\omega,i)}=\mathds 1$ for our particular example.

Hence, from \eqref{eq:nonint}
\begin{equation}
	\int_{\Omega}\int_{\mathbb S^1}\psi \hat h_{\omega,i} dm d\mathds P=+\infty.
\end{equation}
In particular, annealed response cannot hold.
	\section*{Acknowledgments}
	\par\noindent D.D. was supported in part by Croatian Science Foundation under the project IP-2019-04-1239 and by the University of Rijeka under the projects uniri-prirod-18-9 and uniri-pr-prirod-19-16. 
	\\J.S. was supported by the European Research Council (E.R.C.) under the European Union's Horizon 2020 research and innovation programme (grant agreement No 787304).
	\\P.G was partially supported by the PRIN Grant 2017S35EHN  "Regular and stochastic behaviour in dynamical systems" and by the INDAM - GNFM 2020 grant “Deterministic and stochastic dynamical systems for climate studies”.
	
	\bibliographystyle{amsplain}

\begin{thebibliography}{9}
		\bibitem{Arnold}  L. Arnold, \emph{Random dynamical systems}, Springer Monogr. Math., Springer, Berlin, 1998.
		\bibitem{BD}L. Backes and D. Dragi\v cevi\' c, \emph{Periodic approximation of exceptional Lyapunov exponents for semi-invertible operator cocycles}, Ann. Acad. Sci. Fenn. Math. \textbf{44} (2019),
		183–209.
		\bibitem{BahSau} W. Bahsoun and B. Saussol, \emph{Linear response in the intermittent family: differentiation in a weighted $C^0$-norm}, Discrete Contin. Dyn. Syst. \textbf{36} (2016),  6657--6668.
		\bibitem{BRS} W. Bahsoun, M. Ruziboev and B. Saussol, \emph{Linear response for random dynamical systems}, Adv. Math. \textbf{364} (2020), 107011, 44pp.
		\bibitem{B0} V.Baladi, \emph{Correlation spectrum of quenched and annealed equilibrium states for random expanding maps}, Comm. Math. Phys.  \textbf{186} (1997), 671--700.
		\bibitem{B1} V. Baladi, \emph{On the susceptibility function of piecewise expanding interval maps}, Comm. Math. Phys. \textbf{275} (2007),  839--859.
		\bibitem{B2} V. Baladi, \emph{Linear response, or else}, ICM Seoul. In: Proceedings, Volume III, 525--545 (2014).
		\bibitem{Baladibook} V. Baladi, \emph{Dynamical zeta functions and dynamical determinants for hyperbolic maps. A functional approach}, Ergebnisse der Mathematik und ihrer Grenzgebiete. 3. Folge. A Series of Modern Surveys in Mathematics, 68. Springer, Cham, 2018. xv+291.
		\bibitem{BKS} V. Baladi, A. Kondah and B.Schmitt \emph{Random correlations for small perturbations of expanding maps}, Random and Computational Dynamics \textbf{4} (1996), 179--204.
		\bibitem{BS1} V. Baladi and D. Smania, \emph{Linear response formula for piecewise expanding unimodal maps}, Nonlinearity \textbf{21}, 677–711 (2008) (Corrigendum: Nonlinearity \textbf{25}, 2203--2205 (2012)).
		\bibitem{BS2} V. Baladi and D. Smania, \emph{Linear response for smooth deformations of generic nonuniformly hyperbolic unimodal maps},
		Ann. Sci. Éc. Norm. Supér.  \textbf{45} (2012), 861--926.
		\bibitem{BT} V. Baladi and M. Todd, \emph{Linear response for intermittent maps}, Comm. Math. Phys. \textbf{347} (2016),  857--874.
		\bibitem{Bo} T. Bogensch\"utz, \emph{Stochastic stability of invariant subspaces}, Ergodic Theory Dynam. Systems \textbf{20} (2000), 663--680. 
		\bibitem{bomfim2016} T.Bomfim, A. Castro and P. Varandas, \emph{Differentiability of thermodynamical quantities in non-uniformly expanding dynamics}, Adv. Math. \textbf{292} (2016), 478--528.
		\bibitem{BL1} O. Butterley and C. Liverani, \emph{Smooth Anosov flows: correlation spectra and stability}, J. Mod. Dyn. \textbf{1} (2007), 301--322.
		\bibitem{BL2} O. Butterley and C. Liverani, \emph{Robustly invariant sets in fiber contracting bundle flows}, J. Mod. Dyn. \textbf{7} (2013), 255--267.
		\bibitem{Buzzi} J. Buzzi, \emph{Exponential Decay of Correlations for Random Lasota–Yorke Maps}, Comm. Math. Phys. \textbf{208} (1999), 25--54.
		
		\bibitem{HC} H. Crimmins, \emph{Stability of hyperbolic Oseledets splittings for quasi-compact operator cocycles}, Discrete Contin. Dyn. Syst. \textbf{42} (2022),  2795–2857.
		\bibitem{CN} H. Crimmins and Y. Nakano, \emph{A spectral approach to quenched linear and higher-order response for partially hyperbolic dynamics}, arXiv e-prints (2021), arXiv:2105.11188.
		\bibitem{Dol} D. Dolgopyat, \emph{On differentiability of SRB states for partially hyperbolic systems}, Invent. Math. \textbf{155} (2004), 389--449. 
		
		\bibitem{D} D. Dragi\v cevi\' c,  G. Froyland, C. Gonz\`alez-Tokman and S. Vaienti,  \emph{A spectral approach for quenched limit theorems for random expanding dynamical systems}, Comm. Math. Phys. \textbf{360} (2018), 1121--1187.
		\bibitem{DS1} D. Dragi\v cevi\' c and J. Sedro, \emph{Statistical stability and linear response for random hyperbolic dynamics},  Ergodic Theory and Dynamical Systems, 1--30. doi:10.1017/etds.2021.153
		\bibitem{DS} D. Dragi\v cevi\' c and J. Sedro, \emph{Quenched limit theorems for expanding on average cocycles}, arXiv:2105.00548 
		\bibitem{DHS} D. Dragi\v cevi\'c, Y. Hafouta and J. Sedro, \emph{A vector-valued almost sure invariance principle for random expanding on average cocycles}, arXiv:2108.08714

		\bibitem{FGTQ} G. Froyland, C. Gonzalez-Tokman and A. Quas, \emph{Stability and approximation of random invariant densities for Lasota-Yorke map cocycles}, Nonlinearity \textbf{27} (2014), 647--660.
                   
		\bibitem{GG} S. Galatolo and P. Giulietti, \emph{A linear response for dynamical systems with additive noise}, Nonlinearity \textbf{32} (2019), 2269--2301. 
		
		\bibitem{GS} S. Galatolo and J. Sedro, \emph{Quadratic response of random and deterministic dynamical systems}, Chaos \textbf{30} (2020),  023113, 15 pp.
		\bibitem{GTQ} C. Gonz\`alez-Tokman and A. Quas, \emph{A semi-invertible operator Oseledets theorem}, Ergodic Theory Dynam. Systems \textbf{34} (2014), 1230--1272. 
		\bibitem{GL} S. Gou\"{e}zel and C. Liverani, \emph{Banach spaces adapted to Anosov systems}, Ergodic Theory Dynam. Systems \textbf{26} (2006), 123--151. 
		\bibitem{HK}
		Y. Hafouta and Yu. Kifer, \emph{Nonconventional limit theorems and random dynamics}, 
		World Scientific, Singapore, 2018.
		\bibitem{HM} M. Hairer and A. Majda, \emph{A simple framework to justify linear response theory}, Nonlinearity \textbf{23} (2010), 909--922.
		\bibitem{K1} Y. Kifer, \emph{Thermodynamic formalism for random transformations revisited}, Stochastics and Dynamics, Vol. 8, No. 1 (2008) 77--102.
		\bibitem{K} A. Korepanov, \emph{Linear response for intermittent maps with summable and nonsummable decay of correlations}, Nonlinearity \textbf{29} (2016), 1735--1754.
		\bibitem{Ruelle97} D. Ruelle, \emph{Differentiation of SRB states}, Comm. Math. Phys. \textbf{187} (1997),  227--241. 
		\bibitem{RS} J. Sedro and H.H Rugh, \emph{Regularity of characteristic exponents and linear response for transfer operator cocycles}, Comm. Math. Phys. 383, 1243--1289 (2021).
		\bibitem{S} J. Sedro, \emph{A regularity result for fixed points, with applications to linear response},
		Nonlinearity \textbf{31} (2018),  1417--1440.
	\end{thebibliography}
	\begin{footnotesize}
	
	\end{footnotesize}
\end{document}